\newcommand{\gauss}[2]{exp( -((x-#1)^2)/(#2)^2)/sqrt(2*pi*(#2)^2)}
\newcommand{\expE}{\mathbb{E}}
\newcommand\ol[1]{{\overline {#1}}}
\newcommand\norm[1]{\lVert #1 \rVert}
\newcommand{\excise}[1]{}
\newtheorem{thm}{Theorem}[section]
\newtheorem{prop}[thm]{Proposition}
\newtheorem{lem}[thm]{Lemma}
\newtheorem{cor}[thm]{Corollary}
\theoremstyle{definition}
\newtheorem{defn}[thm]{Definition}
\newtheorem{rmk}[thm]{Remark}
\newtheorem{ex}[thm]{Example}
\newtheorem{conv}[thm]{Assumption}
\newcommand\Rm{\mathbb R}
\newcommand\Zm{\mathbb Z}
\newcommand\Em{\mathbb E}
\newcommand\Pm{\mathbb P}
\newcommand\Ical{\mathcal{I}}
\newcommand\Mcal{\mathcal{M}}
\newcommand\Scal{\mathcal{S}}
\newcommand\Tcal{\mathcal{T}}
\newcommand\0{\mathbf{0}}
\newcommand\KK{\mathcal{K}}
\newcommand\RR{\Rm}
\newcommand\ZZ{\Zm}
\newcommand\no{\nonumber}
\renewcommand\implies{\Rightarrow}
\DeclareMathOperator*{\argmin}{arg\,min}
\DeclareMathOperator*{\argmax}{arg\,max}
\begin{document}

\title{Sticky central limit theorems at\linebreak\
	isolated hyperbolic planar singularities}
\author{Stephan Huckemann, Jonathan C. Mattingly, Ezra Miller, and James Nolen}
\address{SH: Felix Bernstein Institute for Mathematical Statistics in
	the Biosciences\\University of
	G\"ottingen\\Goldschmidtstrasse~7, 37077 G\"ottingen, Germany}
\email{huckeman@math.uni-goettingen.de}
\address{JCM: Mathematics Department and Department of Statistical
	Science\\Duke University\\Durham, NC 27708, USA}
\email{jonm@math.duke.edu}
\address{EM: Mathematics Department\\Duke University\\Durham, NC 27708, USA}
\email{www.math.duke.edu/\~{\hspace{-.3ex}}ezra}
\address{JN: Mathematics Department\\Duke University\\Durham, NC 27708, USA}
\email{nolen@math.duke.edu}

\begin{abstract}
We derive the limiting distribution of the barycenter $b_n$ of an
i.i.d.\ sample of $n$ random points on a planar cone with angular
spread larger than $2\pi$.  There are three mutually exclusive
possibilities: (i)~(\emph{fully sticky} case) after a finite random
time the barycenter is almost surely at the origin; (ii)~(\emph{partly
sticky} case) the limiting distribution of $\sqrt{n} b_n$ comprises a
point mass at the origin, an open sector of a Gaussian, and the
projection of a Gaussian to the sector's bounding rays; or
(iii)~(\emph{nonsticky} case) the barycenter stays away from the
origin and the renormalized fluctuations have a fully supported limit
distribution---usually Gaussian but not always.  We conclude with an
alternative, topological definition of stickiness that generalizes
readily to measures on general metric spaces.
\end{abstract}

\date{3 July 2015}


\maketitle

\setcounter{tocdepth}{3}
\tableofcontents

\section*{Introduction}\label{s:intro}

It has recently been observed that large samples from well-behaved
probability distributions on metric spaces that are not smooth
Riemannian manifolds are sometimes constrained to lie in subsets of
low dimension, and that central limit theorems in such cases
consequently behave non-classically, with components of limiting
distributions supported on thin subsets of the sample space
\cite{HHMMN13,BardenLeOwen2013,Basrak2010}.  Our results here continue
this line of investigation with the first complete description of
``sticky'' behavior at a singularity of codimension~$2$.

More precisely, we prove laws of large numbers
(Theorem~\ref{t:LLN-intro}; see Section~\ref{s:slln} for proofs and
more details) as well as central limit theorems
(Section~\ref{sec:central-limit-theorem}; proofs in
Section~\ref{s:clt1}) for Fr\'echet means of probability distributions
(Definitions~\ref{d:populationbarycenter} and~\ref{d:moments}) on
metric spaces possessing the simplest geometric singularities in
codimension~$2$.  The spaces are surfaces homeomorphic to the
Euclidean plane~$\RR^2$ and metrically flat locally everywhere except
at a single \emph{cone point}, where the angle sum---the length of a
circle of radius~$1$---exceeds~$2\pi$ (see Section~\ref{sub:isolated}
for precise definitions).  Thus the surface is planar, the singularity
is isolated, and its geometry is hyperbolic, in the sense of
negatively curved; hence the title of this paper.

The asymptotic behavior splits into three cases, called \emph{fully
sticky}, \emph{partly sticky}, and \emph{nonsticky}
(Definition~\ref{d:stickyclassification} and
Proposition~\ref{p:casesSimple}), according to whether the mean lies
stably at the singularity (Theorem~\ref{t:fullstickyclt}), unstably at
the singularity (Theorem~\ref{t:stickyclt}), or away from the
singularity (Theorem~\ref{t:non-stickyclt}), respectively.  Specific
examples illustrating the sticky phenomena, including subtle non-local
effects of the singular negative curvature when the mean lies in the
smooth stratum (Example~\ref{e:Gaussian}), occupy
Section~\ref{s:examples}.  In contrast to the usual strong law
asserting almost-sure convergence of empirical means to a population
mean, sticky strong laws deal also with the limiting behavior of
supports of the laws of empirical means.  In the sticky case this
support degenerates in some specified sense already in finite random
time (Theorem~\ref{t:LLN-intro}).  Our sticky central limit theorems
assert that the limiting distributions are mixtures of parts of
Gaussians and collapsed (i.e., projected) parts of Gaussians.  Even in
the nonsticky case, the limiting laws can fail to be Gaussian
(Example~\ref{e:Gaussian}), which may come as a surprise: although the
space is locally Euclidean near the mean, the conclusion of
Theorem~2.3 of Bhattacharya and Patrangenaru (2005) can nonetheless
not be valid.

Concluding our analysis is a topological characterization of
stickiness for measures on isolated planar hyperbolic singularities
(Theorem~\ref{t:TopSticky}), as opposed to the algebraic one in terms
of moments (Definitions~\ref{d:moments}
and~\ref{d:stickyclassification}) used for the rest of the exposition.
Thinking topologically leads to a very general notion of stickiness
(Definition~\ref{d:sticky}), which we include with an eye toward
sampling from more general geometrically or topologically singular
spaces.  We have in mind stratified spaces (see [\cite{GM1988}] or
[\cite{Pflaum2001}]), suitably metrized, noting that (for example)
every real semialgebraic variety admits a canonical Whitney
stratification with finitely many semialgebraic
strata \mbox{\cite[Section~2.7]{GWdPL1976}}.

A motivating example of such stratified sample spaces comes from
evolutionary biology, where the objects are phylogenetic trees.  The
space of such objects is CAT(0) (or equivalently, globally
nonpositively curved) [\cite{BilleraHolmesVogtmann2001}] and therefore
has many desirable features where geometric probability is concerned
[\cite{Sturm2003}].  \cite{BardenLeOwen2013} treat the space~$\Tcal_4$
of phylogenetic trees with four leaves.%
	\footnote{As this draft was completed, the preprint
	\cite{BardenLeOwen2014} was posted.  The results there are
	proved for arbitrary numbers of leaves but restrict to
	singularities in codimension~$0$ and~$1$.}
The singularity of~$\Tcal_4$ at its center cone point is a
(non-disjoint) union of a certain number of copies of an isolated
planar singularity with angle sum~$5\pi/2 > 2\pi$.  Therefore some
features of our results are present in the central limit theorem at
the cone point of~$\Tcal_4$ \cite[Theorem~5.2]{BardenLeOwen2013},
which identifies the support of the limit measure in each right-angled
orthant as a cone over an interval.  However, the limit measure
exhibits additionally non-classical behavior at the boundary of its
support, where mass concentrates on the edges and even more on the
origin.  The simpler nature of an isolated planar singularity, which
lacks the global combinatorial complexity of tree space, allows us to
discover these boundary components and characterize them by
identifying the limit measure as the convex projection of a Gaussian
distribution (Theorem~\ref{t:stickyclt}).

While the strong law of large numbers on quasi-metric spaces by
\cite{Z77} and on manifolds by \cite{BP03} requires the existence of a
population mean, meaning square-integrability of the underlying law,
for fully sticky strong laws the existence of a population mean is not
necessary: no square-integrability is required.  Curiously, a (fully)
sticky central limit theorem can consequently hold in the absence of
any population mean at all (Example~\ref{ex:no-mean}).  That said, the
greater challenge consists in the partly sticky case; as is the case
for the multivariate Central Limit Theorem, as well as for that on
manifolds by \cite{HL96,HL98,BP05} or on certain stratified spaces by
\cite{H_Procrustes_10}, square-integrability is still required.

In addition to the theoretical interest in the asymptotic behavior of
means on stratified spaces, another driving motivation comes from the
need to accordingly devise inferential statistical methods for
applications based on the asymptotic behavior of Fr\'echet sample
means and similar mean quantities,
e.g.~\cite{Holmes2003,AydinPatakiWangBullitMarron2009,Nye2011,SBHMOOPPM14}.
This type of development is exemplified, in the form of confidence
intervals on the spider, by \cite{HotzLe2014}.

Many parts of this paper are rather technical---though
elementary---and require the buildup of notation in
Sections~\ref{s:hyperbolic} and~\ref{s:moments}, as we fold the
isolated planar singularity onto~$\RR^2$.  The behavior of first
moments under folding and rotation is essential to understand the
limiting location of barycenters on the singular space~$\KK$ (which we
call the \emph{kale}), and their limiting laws on~$\KK$ as well as
on~$\RR^2$, which are described by certain sectors where a first
folded moment is non-negative.  A list of notion is given in Section \ref{sec:notation}.
%

%

%

\subsection*{Acknowledgements}

The authors acknowledge support through the Niedersachsen Vorab of the
Volkswagen Foundation (SH) as well as grants from the US National
Science Foundation: DMS-0854879 (JCM); DMS-1001437 (EM); DMS-1007572, DMS-1351653
(JN); and the Statistical and Applied Mathematical Sciences Institute,
SAMSI, DMS-1127914~(SH).

\section{Basic definitions and principal results}\label{s:results}

\subsection{Isolated hyperbolic planar singularities}\label{sub:isolated}

The \emph{kale} is the space
\begin{equation}\label{eq:kale}
  \KK = \big((0,\infty) \times (\Rm/\alpha\Zm)\big)\cup \{\0\}
\end{equation}
where $\alpha > 2\pi$ is the \emph{angle sum} at the isolated point
$\0$, called the \emph{origin}, the sole point at which the metric is
not locally Euclidean.  Points are specified by polar coordinates $p =
(r,\theta) \in \KK$ for a \emph{radius} $r > 0$ and \emph{angle}
$\theta \in \Rm/\alpha\Zm$, and the origin is often expressed as $\0 =
(0,0)$ or $\0 = (0,\theta)$ for any $\theta \in \Rm/\alpha\Zm$; that
is, the origin is viewed as lying at zero radius along every ray
emanating from it.
The circle $\Rm/\alpha\Zm$, a group under addition, has the natural
uniform metric defined by
$$%
  |\theta'-\theta|_\alpha
  =
  \min_{n \in \Zm} |n \alpha + \theta' - \theta|.
$$
Note that $|\theta'-\theta|_\alpha\leq \alpha/2$.  Denote by
$d(p_1,p_2)$ the metric on $\KK$ defined by
$$%
d\big((r_1,\theta_1),(r_2,\theta_2)\big)^2 =
  \begin{cases}
    (r_1 + r_2)^2 &\text{ if } |\theta_1-\theta_2|_\alpha \geq \pi,
    \\r_1^2+r_2^2-2r_1r_2\cos\big(|\theta_1-\theta_2|_\alpha\big)
    &\text{ if } |\theta_1-\theta_2|_\alpha \leq \pi.
  \end{cases}
$$
When one of the points is the origin---so one of the radii
vanishes---both cases apply, and in that situation the distance equals
the other radius.  Geometrically, $\KK$ is the metric cone over a
circle of length $\alpha$ placed at distance~$1$ from the cone
point~$\0$.

If we allowed $\alpha = 2\pi$, then this construction would yield $\KK
= \Rm^2$ with the Euclidean metric.  If we allowed $\alpha < 2\pi$,
then this construction would be a right circular (``ice cream'') cone
with angle sum $\alpha$ at the apex.  The cases where the angle sum
$\alpha$ is bigger than, equal to, or smaller than $2\pi$ correspond
to the curvature at the origin being negative, flat, or positive,
respectively.  The name ``kale'' derives from the negative curvature
of that particular leafy~vegetable.

\begin{defn}\label{d:interval}
From now on, write $|\theta' - \theta|$ for $|\theta' -
\theta|_\alpha$, the role of $\alpha$ being understood.  When
$\big|\theta' - \theta\big| \leq \pi$, we identify $\theta' - \theta$
with a number in the closed interval $[-\pi,\pi]$. Specifically, there
is a unique integer $n$ such that $\theta' - \theta + n \alpha \in
[-\pi,\pi]$, and in this case we set
$$%
  \theta' - \theta = \theta' - \theta +  n \alpha \in [-\pi,\pi].
$$
\end{defn}

Definition~\ref{d:interval} implies that when $|\theta' - \theta|\leq
\pi$, the intervals of length $|\theta' - \theta|$ with endpoints
$\theta$ and~$\theta'$, closed or open at either end, are all well
defined in $\RR/\alpha\ZZ$.  In fact, even the interval $[\theta -
\pi,\theta + \pi] \subset \RR/\alpha\ZZ$ is well defined for all
$\theta\in \RR/\alpha\ZZ$, because $\alpha > 2\pi$.  If $|\theta -
\theta'| \leq \pi$, then the intervals $[\theta, \theta'] =
[\theta',\theta]$ coincide as subsets of $\RR/\alpha\ZZ$; it matters
not whether $\theta -\theta' < 0$ or~$\theta - \theta' > 0$.

\begin{defn}\label{d:sector}
If $I \subset \Rm/\alpha\Zm$ is any interval of angles, define the
\emph{sector}
$$%
  C_{I} = \left \{ (r,\theta) \in \KK \mid r \geq 0 \text{ and } \theta
  \in I \right\}
$$
that is the cone over~$I$ from the origin.  (If $I$ is closed, then
$C_I$ is a closed subset of $\KK$.)
\end{defn}
\begin{excise}{%
  \begin{eqnarray*}*\label{minimal-n}
  \big|\theta'-\theta\big|_\alpha &:=& | \alpha n^*\theta'-\theta|\mbox{
  with any } n^*\in \argmin_{n\in\mathbb Z} |\alpha n\theta'-\theta|\,.
  \end{eqnarray*}
}\end{excise}%

\begin{defn}\label{d:folding}
For a fixed angle $\theta \in \Rm/\alpha\Zm$, the \emph{folding map} $F_\theta\colon \KK \to \Rm^2$ is
determined by
$$%
F_\theta(r',\theta') =
  \begin{cases}
                0           & \text{if } r' = 0
   \\(r', \theta' - \theta) & \text{if } r' > 0\text{ and }|\theta' - \theta|\leq\pi
   \\       (r',\pi)        & \text{if } r' > 0\text{ and }|\theta' - \theta|\geq\pi.
  \end{cases}
$$
\end{defn}
Here we are using polar coordinates for both $\KK$ and $\Rm^2$; later we will sometimes use cartesian coordinates for the image of $F_\theta$.  Observe that when $|\theta' - \theta| = \pi$ the second and third cases agree.  A
simple geometric description of the folding map is given in terms of
light and shadow as follows, cf.~also Figure~\ref{f:folding-map}.

\begin{defn}\label{d:IcalA}
The open set
$$%
  \Ical_\theta = \big\{(r,\theta') \in \KK \mid r > 0 \text{ and }
  |\theta' - \theta| > \pi\big\} \subset \KK
$$
is the part of~$\KK$ \emph{invisible from} the angle~$\theta$.  The
complement $\KK\setminus \Ical_{\theta}$ is the part \emph{visible
from~$\theta$}.  The complement $\KK \setminus
\ol{\Ical_\theta}$ of the closure of the invisible part is
\emph{fully visible}, and the set $\ol{\Ical_\theta} \setminus
\Ical_\theta$ of boundary points outside of~$\Ical_\theta$ is
\emph{partly visible}.  The \emph{shadow} of any set $A \subseteq
\Rm/\alpha\Zm$ is
$$%
  \Ical_{A} = \bigcup_{\theta \in A} \Ical_\theta.
$$
\end{defn}

\begin{figure}[ht!]
\centering
\begin{minipage}{0.35\linewidth}

\begin{excise}{
  \begin{tikzpicture}[scale=2.5]
  \coordinate (O) at (0,0);
  \coordinate (A) at (45:1);
  \coordinate (P) at (45:.75);
  \coordinate (B) at (195:1);
  \coordinate (C) at (-115:1);
  \coordinate (D) at (-75:1);
  \coordinate (PP) at (-75:.75);

  \draw[arrows=latex-,thick,draw=orange] (0,.075)--(.3,.375) node[fill=white] {$r$};
  \draw[arrows=-latex,thick,draw=orange] (.35,.425)--( .51,.585);

  \filldraw[fill=lightgray,opacity=0.5] (O)--(B)--(C)--(O);
  \draw (A)--(O)--(B)[dashed] pic["$\pi$",draw=orange,solid,thick,latex-latex,
        angle eccentricity=1.2, angle radius=.6cm] {angle=A--O--B};
  \draw (C)--(O)--(A)[dashed] pic["$\pi$", right,draw=orange,solid,thick,
        latex-latex, angle eccentricity=1.2, angle radius=.75cm] {angle=C--O--A};
  \draw [solid] (O)--(D) node[below right]{$\theta'$};
  \draw [solid] (O)--(A) node[right] {$\theta$};
  \fill (P) circle[radius=.5pt] node[below right] {$p$};
  \fill (PP) circle[radius=.5pt] node[right] {$p'$};

  \node at (220:.5) {$\Ical_\theta$};
  \node at (90:.8) {$\KK$};

  \draw[opacity=0,black] (-1,-1.2) rectangle (1,1);
  \end{tikzpicture}
}\end{excise}
\includegraphics[width=1\textwidth]{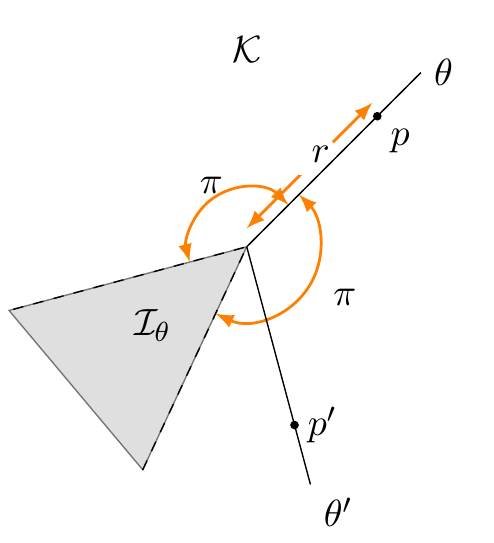}
\end{minipage}
\hspace{-8ex}
\begin{minipage}{0.15\linewidth}
  $$%
  \xrightarrow{\quad F_\theta \quad }
  $$
\end{minipage}
\begin{minipage}{0.36\linewidth}
\begin{excise}{
  \begin{tikzpicture}[scale=2.5]
  \coordinate (O) at (0,0);
  \coordinate (A) at (0:1);
  \coordinate (P) at (0:.75);
  \coordinate (B) at (180:1);
  \coordinate (D) at (-105:1);
  \coordinate (PP) at (-105:.75);

  \draw[arrows=latex-,draw=orange,thick] (0,.075)--(.3,.075) node[fill=white]{$r$};

  \draw[arrows=-latex,draw=orange,thick] (.35,0.075)--(.75,.075);

  \draw (A)--(O)--(D)[dashed] pic["$\theta-\theta'$",draw=orange,solid,thick,
        latex-latex,angle eccentricity=1.2, angle radius=.95cm,below] {angle=D--O--A};
  \draw [solid] (O)--(D) node[below]{$\theta'$};
  \draw [solid] (O)--(A) node[right] {$\theta$};
  \draw [dashed] (O) --(B);
  \fill (P) circle[radius=.5pt] node[below] {$F_\theta(p )$};
  \fill (PP) circle[radius=.5pt] node[left] {$F_\theta(p')$};

  \node at (90:.8) {$\Rm^2$};
  \draw[opacity=0,black] (-1,-1.2) rectangle (1,1);
\end{tikzpicture}
}\end{excise}
\includegraphics[width=1\textwidth]{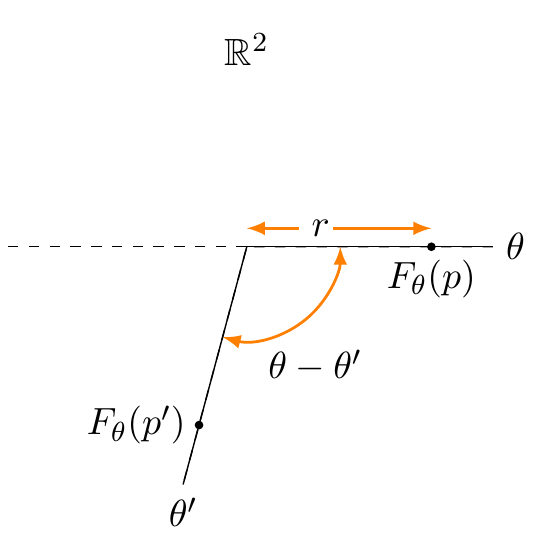}
\end{minipage}
\caption{\label{f:folding-map}%
Fix points $p\neq 0 \neq p'$ with angles $\theta$ and $\theta'$ on the
kale~$\KK$.  Left: The shadow $\Ical_\theta$ of~$p$ is the
interior of the sector of points whose shortest paths to~$p$ pass
through the origin.  In other words, as seen from $p$, the origin
casts the shadow $\Ical_\theta$.  All these points are
\emph{invisible} from $p$.  (For future reference, with notation as
in~(\ref{wpmdef}) and Lemma~\ref{lem:Dmtheta}, including the upper
dashed line gives~$\Ical^+_\theta$ and including the lower
dashed line gives~$\Ical^-_\theta$.)  Right: Under the folding
map $F_\theta$ centered at angle $\theta$, the shadow collapses to the
negative horizontal axis.}
\end{figure}

The terminology referring to (in)visibility and shadow is motivated as
follows.  Imagine placing a light source at a point $p = (r,\theta)$.
If rays of light (geodesics) in~$\KK$ are obstructed by the origin,
then $\Ical_{\theta}$ is the set of points in the shadow cast by the
origin.  Alternatively, imagine light emanating from sources within
$\Ical_{\theta}$: an observer at $(r,\theta)$ is not able to resolve
the image, since all light rays arriving at the observer have merged
at the origin.

\begin{rmk}
The folding map $F_\theta$ is the unique continuous map $\KK \to
\Rm^2$ that preserves all distances from points on the ray at
angle~$\theta$ to other points in~$\KK$; c.f.\
Lemma~\ref{l:distance-under-folding}.  In particular, it preserves
radius from the origin.  The folding map~$F_\theta$ collapses the part
of~$\KK$ invisible from~$\theta$ to the negative horizontal axis
of~$\RR^2$ and takes the fully visible part of~$\KK$ bijectively to
the complement of the negative horizontal axis.

The folding map~$F_\theta$ is the ``logarithm map'' from~$\KK$ to the
tangent space at any point with positive radius along the ray at
angle~$\theta$.  In smooth manifolds, log maps are right inverse to
exponential maps, the latter being globally defined on the tangent
space at a point~$p$, while the former is only defined in a
neighborhood of~$p$.  Here, singularity of the metric at~$\0 \in \KK$
prevents exp from being well defined, whereas uniqueness of geodesics
in~$\KK$ (that is, the absence of a cut locus) makes the log map
globally defined on~$\KK$.
\end{rmk}

\begin{excise}{%
  \begin{ex}
  If $|\theta - \hat \theta| \leq \pi$, then
  $$%
    \Ical_{[\theta,\hat\theta]} = \Ical_{[\hat \theta,\theta]}.
  $$
  That is, it matters not whether $\theta -\hat\theta < 0$ or~$\theta -
  \hat\theta>0$, since the subsets $[\theta,\hat\theta]$ and
  $[\hat\theta,\theta]$ of~$\KK$ coincide.
  \end{ex}
}\end{excise}%
%

\subsection{Barycenters and folded first moments}\label{sub:barycenters}

Let $\mu$ be a Borel probability measure on~$\KK$.  Our main results
concern statistics of random points drawn independently from the
measure~$\mu$ on~$\KK$.  We assume throughout that $\mu$ satisfies the
integrability condition
\begin{equation}\label{integ}
  \bar r := \int_\KK d(\0,p) \,d\mu(p) < \infty.
\end{equation}
Because $\KK$ is not a linear space, the mean of a probability
distribution on~$\KK$ cannot be defined using addition, as it can be
in $\Rm^2$.  Instead, we use the notion of barycenter of a
distribution~$\mu$.  If the second moment condition
(square-integrability)
\begin{equation}\label{squareint}
  \int_\KK d(\0,q)^2 \,d\mu(q) < \infty
\end{equation}
holds, then the function $\Gamma\colon\KK \to \Rm$ defined by
\begin{equation}\label{eq:gammaDef}
\Gamma(p) = \frac{1}{2} \int_\KK d(p,q)^2 \,d\mu(q)
\end{equation}
is finite for all $p \in \KK$, and it has a unique minimizer (proved
later, at Corollary~\ref{cor:Gammamax}).  This leads to the following
definition.

\begin{defn}\label{d:populationbarycenter}
Under the second moment condition (\ref{squareint}), the unique
minimizer of~$\Gamma$ is the \emph{barycenter} of~$\mu$, denoted
by~$\bar b$.
\end{defn}

It is possible to extend this definition in a consistent way to the
setting where only the integrability condition~(\ref{integ}) holds
for~$\mu$ rather than the stronger square-integrability
condition~\eqref{squareint}; see Definition~\ref{d:meanK}.  For now,
we only say enough to state this generalization of
Definition~\ref{d:populationbarycenter}, postponing the full
discussion to Section~\ref{s:moments}.

Under the folding map $F_\theta:\KK \to \Rm^2$, the measure $\mu$
pushes forward to a probability measure $\tilde \mu_\theta = \mu \circ
F_\theta^{-1}$ on $\Rm^2$.  The family of measures $\{\tilde
\mu_\theta\}_{\theta \in \Rm/\alpha\Zm}$ on $\Rm^2$ allows us to
deduce properties of the measure~$\mu$ on~$\KK$.  For points $z \in
\Rm^2$, we typically use cartesian coordinates $z = (z_1,z_2)$; the context should
prevent any confusion with the radial representation $(r,\theta)$ of
points in~$\KK$.  Back in~$\Rm^2$, denote by $e_1 = (1,0)$ and $e_2 =
(0,1)$ the standard basis vectors, and by ``$\cdot$'' the standard
inner product.  The mean of $\tilde\mu_\theta$ in~$\RR^2$ can be
defined in the usual way, as follows.

\begin{defn}\label{d:moments}
For $\theta \in \RR/\alpha\ZZ$, the \emph{first moment} of $\mu$
folded about $\theta$ (or equivalently, the \emph{mean} of $\mu$
folded about $\theta$) is
$$%
  m_\theta = \int_{\Rm^2} z \,d \tilde \mu_\theta(z) = \int_\KK
  F_\theta (p)\, d\mu(p)=  e_1 m_{\theta,1} + e_2  m_{\theta,2}
$$
where
$$%
  m_{\theta,i} = e_i  \cdot m_\theta = \int_\KK e_i \cdot F_\theta
  (p)\, d\mu(p)\quad\text{for }i = 1,2.
$$
\end{defn}

The integrability condition~\eqref{integ} implies that the first
moment $m_\theta$ is finite and that $\theta \mapsto m_{\theta}$ is
continuous.

\begin{defn}\label{d:stickyclassification}
Fix a probability distribution $\mu$ on~$\KK$ and let $K \subset
\RR/\alpha\ZZ$ be the subset on which $m_{\theta,1} \geq 0$.  The
distribution $\mu$ is
\begin{itemize}
\item[\it(i)]%
\emph{fully sticky} if~$K$ is empty;
\item[\it(ii)]%
\emph{partly sticky} if $K$ is non-empty and $m_{\theta,1} = 0$ on its
entirety;~and
\item[\it(iii)]%
\emph{nonsticky} if~$K$ has non-empty interior and $m_{\theta,1} > 0$ on $\text{int}(K)$.
\end{itemize}
The measure $\mu$ is \emph{sticky} if it is either fully sticky or
partly sticky.  When $\mu$ is partly sticky, a direction $\theta$ is
\emph{sticky} if $m_{\theta,1} < 0$ and \emph{fluctuating} if
$m_{\theta,1} \geq 0$.
\end{defn}
Notice that since  $\theta \mapsto m_{\theta,1}$ is continuous, the set $K$
from Definition~\ref{d:stickyclassification}  is
always a closed set.
To rule out certain pathologies, we always assume the following
nondegeneracy condition.

\begin{conv}\label{d:nondegen}
The measure $\mu$ is \emph{nondegenerate} in the sense that
\begin{equation}\label{nondegen}
  \mu(R_{\theta,\theta'}) < 1 \text{ for all angles } \theta,\theta'
  \text{ such that } |\theta - \theta'| \geq \pi,
\end{equation}
where for angles $\theta, \theta' \in \RR/\alpha\ZZ$,
$$%
  R_{\theta,\theta'} = \{ (r,\theta) \mid r \geq 0 \} \cup
  \{(r,\theta') \mid r \geq 0 \},
$$
the union of the two rays at angles $\theta$ and~$\theta'$.
\end{conv}

If nondegeneracy does not hold, then $\mu(R_{\theta,\theta'}) = 1$ for
some pair of angles $\theta, \theta' \in \Rm/\alpha\ZZ$ such that
$|\theta - \theta'| \geq \pi$: all of the mass is concentrated on two
rays separated by an angle of at least~$\pi$.  Since $|\theta -
\theta'| \geq \pi$ means that $(1,\theta') \in \ol{\Ical_\theta}$ (or
equivalently that $(1,\theta) \in \ol{\Ical_{\theta'}}$), it is not
hard to show that this scenario is metrically equivalent the case of
$\KK = \Rm$.

The terms fully sticky, partly sticky, and nonsticky in
Definition~\ref{d:stickyclassification} are mutually exclusive.  The
following result shows that under minimal assumptions, every
distribution is covered by one of these three cases; this is
essentially Proposition~\ref{p:cases}.

\begin{prop}\label{p:casesSimple}
If $\mu$ is a probability measure on $\KK$ that is
integrable~(\ref{integ}) and nondegenerate~(\ref{nondegen}), then
$\mu$ is either fully sticky, partly sticky, or nonsticky.
Furthermore, if $\mu$ is partly sticky, then the interval~$[A,B]$ on
which $m_{\theta,1} \geq 0$ has length $|A - B| < \pi$; if $\mu$ is
nonsticky, then $|A - B| \leq \pi$ and the function $\theta \mapsto
m_{\theta,1}$ is strictly concave on its interior~$(A,B)$.
\end{prop}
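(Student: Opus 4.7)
The plan is to analyze the function $M(\theta) := m_{\theta,1}$ on the compact circle $\Rm/\alpha\Zm$. Decomposing according to visibility,
$$
M(\theta) = \int_{|\theta'-\theta|\leq\pi} r\cos(\theta'-\theta)\,d\mu(r,\theta') - \int_{|\theta'-\theta|>\pi} r\,d\mu(r,\theta'),
$$
where the two integrands agree at the boundary $|\theta'-\theta|=\pi$. Continuity of $M$ on the compact circle is the statement noted immediately after Definition~\ref{d:moments}. Differentiating under the integral sign---where the Leibniz contributions from the moving visibility boundary involve $r\sin(\pm\pi)=0$ on the visible side and cancel against the matching jumps in the invisible integral---yields $M'(\theta) = \int_{|\theta'-\theta|\leq\pi} r\sin(\theta'-\theta)\,d\mu$ and then the key inequality
$$
M''(\theta) + M(\theta) = -\int_{|\theta'-\theta|>\pi} r\,d\mu \leq 0,
$$
holding classically off any boundary atoms of $\mu$ and distributionally in general.

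Armed with this inequality I classify $\mu$ by the sign of $\max_\theta M(\theta)$. If $\max M<0$ then $K$ is empty and $\mu$ is fully sticky; if $\max M=0$ then $M\leq 0$ everywhere with $M\equiv 0$ on $K$, so $\mu$ is partly sticky; if $\max M>0$ then some $\theta_*$ with $M(\theta_*)>0$ lies in $\text{int}(K)$. In this last case I still need $M>0$ on \emph{all} of $\text{int}(K)$: on any component $(A,B)$ of $\text{int}(K)$, the inequality gives $M''\leq -M\leq 0$, so $M$ is concave on $(A,B)$; the super-level set $\{M>0\}\cap(A,B)$ is then an interval, and if it failed to exhaust $(A,B)$ there would be an endpoint $A'\in(A,B)$ with $M(A')=0$ while concavity applied to a point with $M=0$ and another with $M>0$ straddling $A'$ would force $M(A')>0$, a contradiction. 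Hence $M>0$ throughout $(A,B)$, and then $M''\leq -M<0$ gives strict concavity.

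For the length bound in the nonsticky case, a Pr\"ufer substitution $M=R\sin\phi$, $M'=R\cos\phi$ converts the inequality $M''+M\leq 0$ into the phase inequality $\phi' \geq 1$ on the region $\sin\phi\geq 0$. Between consecutive zeros of $M$ on which $M>0$, $\phi$ must advance by exactly $\pi$, forcing the $\theta$-length to be at most $\pi$. For the strict inequality $|A-B|<\pi$ in the partly sticky case, suppose $M\equiv 0$ on $[A,B]$ with $B-A\geq\pi$. The inequality then forces the invisible integral to vanish for every $\theta\in(A,B)$, so the support of $\mu$ (off the origin) lies in the arc $J=[B-\pi,A+\pi]$ of length $2\pi-(B-A)\leq\pi$. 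At $\theta_c=(A+B)/2$, every point of $J$ sits within angular distance $\pi-(B-A)/2\leq\pi/2$ of $\theta_c$, so $\cos(\theta'-\theta_c)\geq 0$ on the support with strict inequality off the two extreme rays of $J$; these rays are at mutual angular distance $2\pi-(B-A)\leq\pi$, reaching $\pi$ only in the boundary case $B-A=\pi$, and Assumption~\ref{d:nondegen} rules out concentration on those two rays there, while for $B-A>\pi$ the arc $J$ has length $<\pi$ and $\cos>0$ holds strictly throughout. Either way $M(\theta_c)>0$, contradicting $M\equiv 0$ on $[A,B]$.

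The main obstacle is the careful bookkeeping of the identity $M''+M=-\int_{|\theta'-\theta|>\pi} r\,d\mu$ when $\mu$ charges a visibility boundary $|\theta'-\theta|=\pi$: $M$ remains continuous and $M'$ is well-defined as the interior integral $\int_{|\theta'-\theta|<\pi} r\sin(\theta'-\theta)\,d\mu$, but $M''$ only exists distributionally in general. Both the super-level-set concavity argument and the Pr\"ufer phase comparison go through in this weak sense, but the signed cancellations between the visible and invisible contributions at boundary atoms need to be verified to ensure no hidden surface terms spoil the inequality; this is the principal technical checkpoint of the proof.
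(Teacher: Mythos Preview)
Your ODE-and-Pr\"ufer approach is elegant and the identity $M'' + M = -\int_{|\theta'-\theta|>\pi} r\,d\mu \leq 0$ is essentially the paper's Lemma~\ref{lem:Dmtheta}. The Pr\"ufer phase comparison is a clean replacement for the paper's integration formula (Corollary~\ref{cor:odeintegrate}) in bounding the length of a single positive arc, and your support argument for the strict bound $|A-B|<\pi$ in the partly sticky case is correct once one knows $K$ is an interval.

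However, there is a genuine gap: you never prove that $K=\{\theta:M(\theta)\geq 0\}$ is \emph{connected}. Your arguments all operate on a single component $(A,B)$ of $\mathrm{int}(K)$, and nothing you have written rules out multiple components. The inequality $M''+M\leq 0$ alone on a circle of circumference $\alpha>2\pi$ does not force this: for instance $M(\theta)=\sin\theta$ on $\Rm/4\pi\Zm$ satisfies $M''+M=0$ and has two positive arcs. Of course that particular $M$ does not arise as $m_{\theta,1}$ for any measure (since $w\equiv 0$ would force all mass at the origin), but the point is that your abstraction to the ODE inequality has discarded exactly the structure needed to exclude such behavior. Relatedly, in the nonsticky case your concavity argument shows that on each component either $M>0$ throughout or $M\equiv 0$ throughout---but it does not rule out one component of each type coexisting, which would make $\mu$ fit none of the three definitions.

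The paper supplies the missing ingredient via a pointwise folding inequality (Lemma~\ref{lem:foldththp}): $e_1\cdot F_\theta(p)\leq -e_1\cdot F_{\theta'}(p)$ whenever $|\theta-\theta'|\geq\pi$, with strict inequality off $R_{\theta,\theta'}$ when $|\theta-\theta'|>\pi$. Integrating against $\mu$ and invoking nondegeneracy gives $m_{\theta,1}+m_{\theta',1}<0$ for $|\theta-\theta'|>\pi$ (Corollary~\ref{cor:mshadow2}). This immediately forces $K$ to lie in an arc of length $\leq\pi$, and combined with the local analysis yields connectedness. You need either this inequality or some equivalent global input; the second-order ODE by itself does not see far enough around the circle.
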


We are now in a position to generalize the concept of barycenter
in~$\KK$ to the setting where $\mu$ only satisfies the integrability
condition \eqref{integ} but not the square-integrability
condition~\eqref{squareint}.

\begin{defn}\label{d:meanK}
If the probability distribution $\mu$ satisfies \eqref{integ} and is
sticky (either fully or partly sticky), then set the \emph{mean} of
$\mu$ equal to the origin~$\0$.  If $\mu$ is nonsticky, then set the
\emph{mean} of $\mu$ equal to the point $(m_{\theta',1}\,,\theta') \in
\KK$, where $\theta'$ maximizes the function $\theta \mapsto
m_{\theta,1}$.
\end{defn}

In light of Proposition~\ref{p:casesSimple}, the mean of $\mu$ is
well defined for all distributions that satisfy the integrability and
nondegeneracy assumptions; the second moment condition used in the
definition of the barycenter is not necessary to define a mean.  In
Corollary~\ref{cor:Gammamax} we show that when the barycenter is
defined, the mean of~$\mu$ coincides with its barycenter.

\subsection{Empirical measures and the law of large numbers}\label{sub:empirical}

For a given set of points $\{p_n\}_{n=1}^N \subset \KK$, define the
empirical measure
$$%
  \mu_N = \frac{1}{N} \sum_{n=1}^N \delta_{p_n},
$$
the averaged sum of unit measures supported on the points~$p_n$.  This
is a Borel probability measure on $\KK$, and all results of the
previous section apply to~$\mu_N$.  Let $b_N = b(p_1,\ldots,p_N)$ be
the barycenter of~$\mu_N$:
\begin{equation}\label{d:sample-barycenter-kale}
  b_N = b(p_1,\dots,p_N) = \argmin_{p \in \KK} \Bigl( \frac{1}{2N}
  \sum_{n=1}^N d( p, p_n)^2\Bigr),
\end{equation}
uniquely defined (by Corollary~\ref{cor:Gammamax}).  For $\theta \in
\RR/\alpha\ZZ$, write $\eta_{\theta,N} \in \Rm^2$ for the
\mbox{\emph{folded~average}}
\begin{equation}\label{etaNdef}
  \eta_{\theta,N} = \frac{1}{N} \sum_{n=1}^N F_\theta(p_n).
\end{equation}
The folded first moments of~$\mu_N$, which we denote by~$m_\theta^N
\in \Rm^2$, are defined by
$$%
  m^N_\theta = e_1 \,  m^N_{\theta,1}  + e_2\, m^N_{\theta,2}
$$
where
$$%
  m^N_{\theta,i} = e_i \cdot m^N_\theta = \int_\KK e_i \cdot F_\theta
  (p)\, \,d\mu_N(p) \quad\text{for $i=1,2$}.
$$
Comparing these formulas to (\ref{etaNdef}), the folded average is
evidently equivalent to the first moment of the empirical measure:
\begin{equation}\label{etaNMequiv}
\eta_{\theta,N} = m^N_\theta \quad\text{ for all }\theta \in \Rm/\alpha\Zm.
\end{equation}
An important issue in our analysis is whether the folded average
$\eta_{\theta,N}$ is close to the folded barycenter $F_{\theta} b_N$,
that is, whether ``averaging commutes with folding".  These two points
in $\Rm^2$ may not coincide; the relation between $\eta_{\theta,N}$
and $F_{\theta} b_N$ is addressed later in Lemma~\ref{lem:bNetaN2}.

Henceforth, let $\{p_n\}_{n=1}^N$ be a collection of independent
random points on $\KK$, each distributed according to $\mu$.  More
precisely, let $\{p_n(\omega) \mid n = 1,\ldots,N\}$ be a collection
of independent, identically distributed $\KK$-valued random variables,
each distributed according to~$\mu$ over a probability space
$(\Omega,{\mathcal A},\Pm)$.  Their barycenter $b_N(\omega) =
b(p_1(\omega),\dots,p_N(\omega)) \in \KK$ is also a random variable
taking values in $\KK$.  For each $\theta\in\RR/\alpha\ZZ$, let
$m^N_\theta=m^N_\theta(\omega)$ be the random first moments associated
with the empirical measures $\mu_N = \mu_N(\omega) =
\frac{1}{N}\sum_{n=1}^N\delta_{p_n(\omega)}$.  As before, denote by
$m_\theta$ the deterministic folded means of~$\mu$ in
Definition~\ref{d:moments}.  For any angle $\theta$,
$$%
  \expE[ m_{\theta}^N] = \frac{1}{N} \sum_{n=1}^N \int_{\KK}
  F_{\theta}(p_n) \,d\mu(p_n) = \frac{1}{N} \sum_{n=1}^N \int_{\Rm^2}
  z \,d \tilde \mu_{\theta}(z) = m_{\theta}.
$$
By the usual strong law of large numbers for $\Rm^2$-valued random
variables,
\begin{equation}\label{moments:slln}
  m_{\theta}^N \rightarrow m_\theta\ \Pm\text{-almost surely as }
  N\to \infty, \text{for all }\theta\in \RR/\alpha \ZZ.
\end{equation}

Translating back into a law of large numbers in $\KK$ for the random
barycenters~$b_N$, the behavior in the first two cases is strikingly
different than the typical law of large numbers in a Euclidean space.
The following result is proved in Section~\ref{s:slln}.

\begin{thm}[Law of Large Numbers on $\KK$]\label{t:LLN-intro}
Assume that $\mu$ satisfies the integrability condition~(\ref{integ}).
Exactly one of the following holds, depending on how sticky $\mu$~is.
\begin{enumerate}
\item%
(Fully sticky) %
The mean of $\mu$ is $\0$ and there exists a random integer $N^*$ such
that the barycenter $b_N$ from~(\ref{d:sample-barycenter-kale})
satisfies $b_N(\omega) = \0$ for all $N \geq N^*(\omega)$,
$\Pm$-almost surely.
\item%
(Partly sticky) %
The mean of $\mu$ is $\0$ and $b_N(\omega) \rightarrow \0$ almost
surely as $N \rightarrow \infty$.  Furthermore, if $[A,B]$ is the
interval of fluctuating directions from
Definition~\ref{d:stickyclassification}(ii) and Proposition \ref{p:casesSimple}, and $I$ is an open
interval of angles containing $[A,B]$ then there exists a random
integer $N^*$ such that $b_N(\omega) \in C_I$
(Definition~\ref{d:sector}) for all $N \geq N^*(\omega)$, almost
surely.
\item%
(Nonsticky) %
The mean $\bar b$ of $\mu$ is not $\0$ and $b_N(\omega) \rightarrow
\bar b$ almost surely as $N \rightarrow \infty$.
\end{enumerate}
\end{thm}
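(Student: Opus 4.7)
The plan is to reduce the three-part law of large numbers on~$\KK$ to a single uniform strong law for the folded first moments $\theta\mapsto m^N_\theta$, and then to read off both the location and the direction of the barycenter $b_N$ from the sign structure of $\theta\mapsto m^N_{\theta,1}$. The bridge between these two pictures will be Lemma~\ref{lem:bNetaN2} together with Corollary~\ref{cor:Gammamax}, which translate the optimality conditions for $b_N$ into a statement about the folded averages $\eta_{\theta,N}=m^N_\theta$ and (applied to $\mu_N$) identify $b_N$ either with~$\0$ or with a point on the ray at an angle maximizing $m^N_{\theta,1}$.

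The first step is to upgrade the pointwise convergence in~(\ref{moments:slln}) to almost sure uniform convergence $\sup_\theta |m^N_\theta-m_\theta|\to 0$ on the compact circle $\RR/\alpha\ZZ$. The map $\theta\mapsto F_\theta(p)$ is Lipschitz in $\theta$ with constant bounded by a universal multiple of $d(\0,p)$, so the family $\{\theta\mapsto m^N_\theta\}_N$ is equicontinuous with modulus controlled by $\frac1N\sum_n d(\0,p_n)\to\bar r$ a.s.\ by the integrability assumption~(\ref{integ}); combining this with pointwise convergence on a countable dense set of angles gives the uniform SLLN. Applying this together with the characterization of $b_N$ in terms of $m^N_\theta$ yields the dichotomy: either $m^N_{\theta,1}\le 0$ for every $\theta$ and $b_N=\0$, or $\max_\theta m^N_{\theta,1}>0$ and $b_N=(m^N_{\theta_N^*,1},\theta_N^*)\in\KK$ for any maximizing angle $\theta_N^*$.

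With these tools in hand the three cases are largely mechanical. In the fully sticky case, continuity and compactness give $\max_\theta m_{\theta,1}<0$, hence uniform convergence forces $m^N_{\theta,1}<0$ for all $\theta$ once $N\geq N^*(\omega)$, so $b_N=\0$ thereafter. In the nonsticky case, strict concavity on $(A,B)$ from Proposition~\ref{p:casesSimple} pins down a unique maximizer $\theta^*$ with $m_{\theta^*,1}>0$, whence uniform convergence plus uniqueness of the maximizer yields $\theta_N^*\to\theta^*$ and $m^N_{\theta_N^*,1}\to m_{\theta^*,1}$, giving $b_N\to\bar b=(m_{\theta^*,1},\theta^*)$ in~$\KK$. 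For the partly sticky case, given any open interval $I$ containing $[A,B]$, compactness of the complement and the strict inequality $m_{\theta,1}<0$ off $[A,B]$ supply a gap $\sup_{\theta\notin I} m_{\theta,1}=:-\delta<0$; uniform convergence then forces $m^N_{\theta,1}<-\delta/2$ off $I$ for $N\ge N^*$, so any maximizer $\theta_N^*$ lies in~$I$, which places $b_N$ in $C_I$ (trivially so when $b_N=\0$). Convergence $b_N\to\0$ then follows because its radius equals $\max_\theta m^N_{\theta,1}\to \max_\theta m_{\theta,1}=0$.

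The main obstacle is the partly sticky step, where one must simultaneously control the angle and radius of $b_N$: the angular localization requires the gap argument above, and the radial decay requires that $\max_\theta m^N_{\theta,1}\to 0$ rather than to any positive limit, which is where the defining equality $m_{\theta,1}=0$ on~$[A,B]$ together with uniform convergence is crucial. A secondary care point is to guarantee that for $N\ge N^*$ the empirical measure $\mu_N$ satisfies the nondegeneracy assumption~(\ref{nondegen}) so that the characterization of $b_N$ through $m^N_\theta$ is valid; this follows from the corresponding nondegeneracy of~$\mu$ and the SLLN applied to the probabilities of the finitely many rays that could carry mass.
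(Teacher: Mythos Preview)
Your proposal is correct and follows essentially the same route as the paper: the uniform SLLN for $\theta\mapsto m^N_\theta$ is precisely Lemma~\ref{l:mconv} (proved there via a finite grid, which is your equicontinuity argument made explicit), and the three cases are then dispatched exactly as you outline, using the gap argument (Theorem~\ref{t:stickyLLN}) and Corollary~\ref{cor:Gammamax} applied to~$\mu_N$. Your final concern about nondegeneracy of~$\mu_N$ is unnecessary and your proposed fix for it is vague; but in fact the identity $\Gamma_N(r,\theta)=r^2/2-r\,m^N_{\theta,1}+\Gamma_N(\0)$ from Lemma~\ref{lem:Gammarep} alone shows that any minimizer of~$\Gamma_N$ has radius $\max\bigl(0,\max_\theta m^N_{\theta,1}\bigr)$ and, when nonzero, angle in the argmax of $m^N_{\theta,1}$, so only uniqueness of the \emph{limiting} maximizer---i.e.\ nondegeneracy of~$\mu$, not of~$\mu_N$---is required for the argmax convergence in the nonsticky case.
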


The theorem implies that for all of the sticky directions~$\theta$,
the empirical mean $b_N$ stops fluctuating after some random but
finite $N^*$ along the ray $\{ (r,\theta)\;|\; r \geq 0 \}$; this is
the phenomenon that we refer to as ``stickiness".  In fluctuating
directions, the empirical mean $b_N$ continues to vary as $N \to
\infty$, although the magnitude of the movement goes to
zero~asymptotically.

\subsection{Central Limit Theorems}\label{sec:central-limit-theorem}

The central limit theorems in this section describe the asymptotic
behavior of the properly normalized fluctuations of~$b_N$ about the
mean of~$\mu$.  Due to the non-standard nature of the sticky law of
large numbers, it is not surprising that the central limit theorem
also takes a different form in sticky cases.  Even in the nonsticky
case, the central limit theorem is non-standard.  Each of the three
possibilities in Proposition~\ref{p:casesSimple} is covered in a
separate theorem; these three theorems are proved in
Section~\ref{s:clt1}.

\subsubsection{Fully sticky case}\label{sub:fully}

The simplest case is the fully sticky case, where there are
asymptotically no fluctuations in any direction.   On $\KK$
define the scaling $\beta (r,\theta) = (\beta r,\theta)$ for arbitrary
$\beta \geq 0$ such that $F_{\theta'}\big(\beta (r,\theta)\big) =
\beta F_{\theta'} (r,\theta)$ for all $\theta,\theta' \in
\Rm/\alpha\Zm$ and $r,\beta \geq 0$.  Let $\nu_N$ denote the
distribution of the rescaled empirical means:
\begin{equation}\label{eq:rescaled-emp-means}
  \nu_N(U) = \Pm\big(\sqrt{N} b_N(\omega) \in U \big),
  \text{ where $b_N$ is the empirical barycenter~(\ref{d:sample-barycenter-kale})}
\end{equation}
for all Borel sets $U \subset \KK$.

\begin{thm}\label{t:fullstickyclt}
If a probability measure $\mu$ on~$\KK$ is fully sticky, then the
rescaled empirical mean measures $\{\nu_N\}_{N=1}^\infty$
from~(\ref{eq:rescaled-emp-means}) converge in the total variation
norm (and hence weakly) to the point measure $\delta_{\0}$ as $N \to
\infty$.  In particular, for any bounded function $\phi:\KK \to \Rm$,
\begin{equation}\label{stickyclt0}
  \lim_{N \to \infty} \int_\KK \phi(p) d\nu_N(p)
  =
  \phi(\0)\,.
\end{equation}
\end{thm}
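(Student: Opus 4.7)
The key observation is that in the fully sticky case, Theorem~\ref{t:LLN-intro}(1) already tells us almost everything: there is a $\Pm$-almost surely finite random integer $N^*(\omega)$ such that $b_N(\omega) = \0$ for every $N \geq N^*(\omega)$. Since the scaling $\beta(r,\theta) = (\beta r, \theta)$ fixes the origin, this immediately promotes to $\sqrt{N}\,b_N(\omega) = \0$ for all $N \geq N^*(\omega)$, almost surely. So the plan is to leverage this deterministic (on the event $\{N^* \leq N\}$) coincidence directly, rather than do any genuine asymptotic analysis.

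Concretely, I would first observe that $\Pm(N^* \leq N) \to 1$ as $N \to \infty$, because $N^*$ is finite almost surely. Then, for any Borel set $U \subset \KK$, I split according to whether $\0 \in U$ or $\0 \notin U$. In the first case, $\{N^* \leq N\} \subset \{\sqrt{N} b_N \in U\}$, so $\nu_N(U) \geq \Pm(N^* \leq N)$ and hence $1 - \nu_N(U) \leq \Pm(N^* > N)$. In the second, $\{\sqrt{N} b_N \in U\} \subset \{N^* > N\}$, so $\nu_N(U) \leq \Pm(N^* > N)$. Taking the supremum over $U$ gives
$$
\|\nu_N - \delta_{\0}\|_{TV} \;\leq\; \Pm(N^* > N) \;\longrightarrow\; 0,
$$
which is the claimed total variation convergence.

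For the concluding formula (\ref{stickyclt0}), I would simply decompose the integral along $\{N^* \leq N\}$ and its complement: on the former event $\phi(\sqrt{N} b_N) = \phi(\0)$ deterministically, while the contribution from $\{N^* > N\}$ is bounded by $\|\phi\|_\infty \, \Pm(N^* > N)$ and hence vanishes. This yields
$$
\Bigl| \int_\KK \phi \, d\nu_N - \phi(\0) \Bigr| \;\leq\; 2\|\phi\|_\infty \, \Pm(N^* > N) \;\longrightarrow\; 0.
$$

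There is essentially no obstacle here, since all the work has already been done in establishing the fully sticky law of large numbers. The only subtle point worth flagging explicitly is the compatibility of the rescaling with the origin, i.e.\ that $\sqrt{N} \cdot \0 = \0$, so that the almost sure equality $b_N = \0$ for large $N$ survives rescaling intact; once this is noted, total variation convergence (which is much stronger than the usual weak convergence encountered in central limit theorems) falls out immediately from the tail estimate on~$N^*$.
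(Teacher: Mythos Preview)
Your proof is correct and follows essentially the same approach as the paper: both invoke the fully sticky law of large numbers to obtain the almost surely finite $N^*$ with $b_N = \0$ for $N \geq N^*$, then bound the deviation from $\delta_{\0}$ by $\Pm(N^* > N) \to 0$. The only cosmetic difference is that you establish total variation convergence directly via Borel sets and then derive~(\ref{stickyclt0}), whereas the paper proves the bounded-function estimate first and observes that the uniform bound in $\|\phi\|_\infty$ yields total variation convergence.
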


In this fully sticky case, the term ``Central Limit Theorem'' is a bit
of a misnomer, since there are no asymptotic fluctuations.  In fact,
Theorem \ref{t:fullstickyclt} would still be true if we replace
$\sqrt{N}$ in (\ref{eq:rescaled-emp-means}) with any increasing
function of $N$.

The next two cases require a bit more notation and setup.

\subsubsection{Partly sticky case}\label{sub:partly}

Assume the second moment condition~(\ref{squareint}).  Since the
mean~$\bar b$ of~$\mu$ lies at the origin~$\0$ in the partly sticky
case, again consider the rescaled empirical measure $\nu_N$ defined
by~(\ref{eq:rescaled-emp-means}).  The limit of~$\nu_N$ is another
measure on~$\KK$, constructed as follows.

Let $\theta^*$ and $\rho \in [0,\pi/2)$ be such that $[\theta^*-\rho,
\theta^*+\rho]=[A,B]$ where $[A,B]$ is the interval of fluctuating
directions (Definition~\ref{d:stickyclassification}.ii and Proposition \ref{p:casesSimple}).  Let $g$
denote the law of the multivariate normal random variable on $\Rm^2$
having mean zero and covariance matrix
\begin{equation}\label{eq:g}
  \Sigma = \int_{\Rm^2} y y^T d \tilde \mu_{\theta^*}(y).
\end{equation}
This matrix is well defined due to the square-integrability
condition~(\ref{squareint}).

Denote by $D_\rho \subset \Rm^2$ the closed sector
\begin{equation}\label{eq:Drho}
  D_\rho = \left\{(r \cos\vartheta, r \sin\vartheta) \in \Rm^2 \mid r
  \geq 0 \text{ and $-\rho$} \leq \vartheta \leq \rho\right\}
\end{equation}
and by $\hat P_\rho: \RR^2 \to D_\rho$ the \emph{convex projection}
onto~$D_\rho$:
\begin{equation}\label{eq:convex projection}
  \hat P_\rho(q) = \argmin_{z \in D_{\rho}} d_2(q,z),
\end{equation}
where $d_2(z,w): \Rm^2 \times \Rm^2 \to [0,\infty)$ denotes the
Euclidean metric in~$\Rm^2$.  Since $|A-B| < \pi$, the folding map
$F_{\theta^*}$ takes the sector $C_{[A,B]}$
(Definition~\ref{d:sector}) bijectively to $D_\rho$.  It is possible
that $\rho =0$ or equivalently $A=B$, in which case $C_{[A,B]}$ and
$D_{\rho}$ are rays.

Finally, define the measure $h_{\theta^*}$ on $\KK$ by
\begin{equation}\label{eq:h}
  h_{\theta^*} = g \circ \hat P_\rho^{-1} \circ F_{\theta^*},
\end{equation}
where $g \circ \hat P_\rho^{-1}$ is the pushforward of the normal
measure~$g$, whose covariance matrix is defined in~(\ref{eq:g}), under
the projection $\hat P_\rho$ to~$D_\rho$.
Figure~\ref{f:partly-sticky} illustrates the construction in an
example.

\begin{figure}[ht!]
\centering
\begin{minipage}{0.28\linewidth}
\begin{excise}{
  \begin{tikzpicture}[scale=2.5]
  \coordinate (O) at (0,0);
  \coordinate (A) at (80:1);
  \coordinate (Ae) at (95:1);
  \coordinate (theta) at (25:1);
  \coordinate (B) at (-30:1);
  \coordinate (Be) at (-45:1);
  \coordinate (D) at (-50:1);

  \filldraw[fill=lightgray,opacity=0.5] (O)--(B) arc (-30:80:1) (A)--(O);
  \draw[solid](A)node[right]{\raisebox{2.5ex}{$\!\!\!B$}}--(O)--(B)node[right]{$\!A$};
  \draw[dotted] (theta) node[right]{$\theta^*$}--(O);

  \draw [dashed] (Ae) node[left]{$B^\epsilon\!$} --(O) --(Be) node[left]{$A_\epsilon$};

  \node at (-0:.60) {$C_{[A,B]}$};
  \node at (85:1.3) {$\KK$};
  \draw[opacity=0,black] (-.5,-1.25) rectangle (1.1,1.35);
  \end{tikzpicture}
}\end{excise}
\includegraphics[width=1\textwidth]{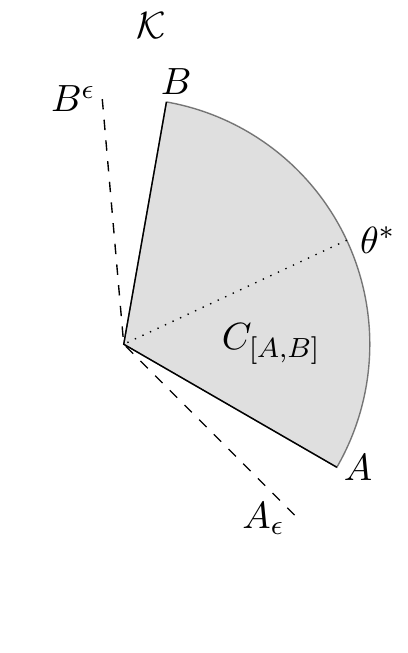}
\end{minipage}
\begin{minipage}{0.2\linewidth}
$$%
  \xrightarrow{\quad F_{\theta^*} \quad}
$$
\end{minipage}
\begin{minipage}{0.35\linewidth}
\hspace{-2ex}
\begin{excise}{
  \begin{tikzpicture}[scale=2.5]

  \coordinate (one) at (1,0) ;
  \coordinate (O) at (0,0);
  \coordinate (A) at (55:1) ;
  \coordinate (B) at (-55:1) ;
  \coordinate (Ae) at (70:1) ;
  \coordinate (Be) at (-70:1) ;

  \coordinate (A1) at ($(55:.1)!.6cm!90:(55:1)$);
  \coordinate (A2) at ($(55:.3)!.6cm!90:(55:1)$);
  \coordinate (A3) at ($(55:.5)!.6cm!90:(55:1)$);
  \coordinate (A4) at ($(55:.7)!.6cm!90:(55:1)$);
  \coordinate (A5) at ($(55:.9)!.6cm!90:(55:2)$);

  \coordinate (B1) at ($(-55:.1)!.6cm!-90:(-55:1)$);
  \coordinate (B2) at ($(-55:.3)!.6cm!-90:(-55:1)$);
  \coordinate (B3) at ($(-55:.5)!.6cm!-90:(-55:1)$);
  \coordinate (B4) at ($(-55:.7)!.6cm!-90:(-55:1)$);
  \coordinate (B5) at ($(-55:.9)!.6cm!-90:(-55:1)$);

  \coordinate (C1) at (155:.6cm) ;
  \coordinate (C2) at (180:.6cm) ;
  \coordinate (C3) at (205:.6cm) ;

  \filldraw[fill=lightgray,opacity=0.5] (O)--(B) arc (-55:55:1) (A) --(O);

  \draw [dotted] (0:1)--(O);
  \draw [solid]  (A)--(O)--(B)  ;
  \draw [dashed]  (Ae)--(O)--(Be)  ; 

  \draw[orange,dashed,arrows=-latex,thick,shorten >= 2mm] (A1)--($(O)!(A1)!(A)$);
  \draw[orange,dashed,arrows=-latex,thick,shorten >= 2mm] (A2)--($(O)!(A2)!(A)$);
  \draw[orange,dashed,arrows=-latex,thick,shorten >= 2mm] (A3)--($(O)!(A3)!(A)$);
  \draw[orange,dashed,arrows=-latex,thick,shorten >= 2mm] (A4)--($(O)!(A4)!(A)$);
  \draw[orange,dashed,arrows=-latex,thick,shorten >= 2mm] (A5)--($(O)!(A5)!(A)$);

  \draw[orange,dashed,arrows=-latex,thick,shorten >= 2mm] (B1)--($(O)!(B1)!(B)$);
  \draw[orange,dashed,arrows=-latex,thick,shorten >= 2mm] (B2)--($(O)!(B2)!(B)$);
  \draw[orange,dashed,arrows=-latex,thick,shorten >= 2mm] (B3)--($(O)!(B3)!(B)$);
  \draw[orange,dashed,arrows=-latex,thick,shorten >= 2mm] (B4)--($(O)!(B4)!(B)$);
  \draw[orange,dashed,arrows=-latex,thick,shorten >= 2mm] (B5)--($(O)!(B5)!(B)$);

  \draw[orange,dashed,arrows=-latex,thick,shorten >= 2mm] (C1)--(O);
  \draw[orange,dashed,arrows=-latex,thick,shorten >= 2mm] (C2)--(O);
  \draw[orange,dashed,arrows=-latex,thick,shorten >= 2mm] (C3)--(O);

  \draw pic[draw=orange,arrows={latex[orange,fill=orange]-latex[orange,fill=orange]},
	solid,thick, angle eccentricity=1.05, angle radius=2cm,right,
	"$\,\rho=\frac{|A-B|}2$"] {angle=one--O--A};

  \node at (-20:.60) {$\mathcal{D}_\rho$};
  \node at (85:1.3) {$\Rm^2$};
  \draw[opacity=0,black] (-.5,-1.25) rectangle (1,1.35);
  \end{tikzpicture}
}\end{excise}
\includegraphics[width=1\textwidth]{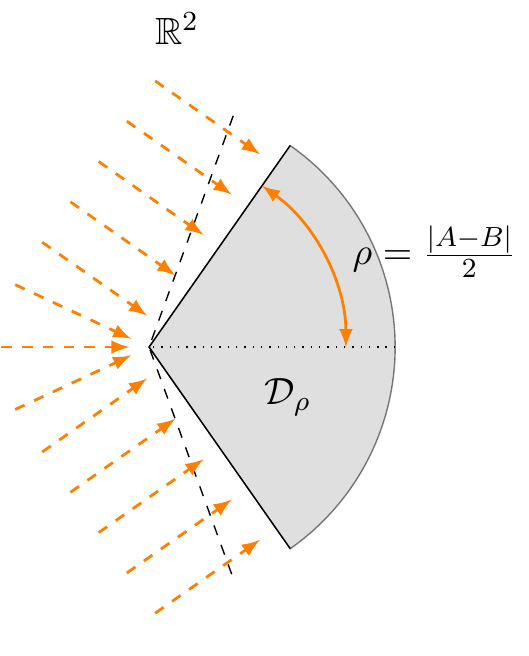}
\end{minipage}
\caption{\label{f:partly-sticky}%
Partly sticky case.  Left: $C_{[A,B]}$ is that sector in $\KK$
centered at $\theta^*$ that is spanned by the angles $\theta$ for
which $m_{\theta,1}=0$.  For $N$ larger than a finite but random
number, $b_N \in C_{[A_\epsilon,B^\epsilon]}$ almost surely.  Right:
$D_\rho$ is the bijective image of $C_{[A,B]}$ under the folding map
centered at $\theta^*$.  With a Gaussian $g$ centered at $0\in \RR^2$,
up to the bijection, the limiting measure is $g$ on ${\rm
int}(D_\rho)$ and the pushforward of $g$ on $\RR^2 \setminus D_\rho$
to $\partial D_\rho$ under the convex projection $\hat P_\rho$.  The
dashed arrows show the directions of this convex projection.}
\end{figure}

\begin{thm}\label{t:stickyclt}
If a measure $\mu$ on~$\KK$ is partly sticky and
square-integrable~\eqref{squareint}, then the rescaled empirical mean
measures $\{\nu_N\}_{N=1}^\infty$ from~(\ref{eq:rescaled-emp-means})
converge weakly to the measure $h_{\theta^*}$ from~(\ref{eq:h}) as $N
\to \infty$, where $\theta^*$ is the midpoint of the interval~$K$ in
Definition~\ref{d:stickyclassification}.  That is, for any continuous,
bounded function $\phi:\KK \to \Rm$,
\begin{equation}\label{stickyclt1}
  \lim_{N \to \infty} \int_\KK \phi(p) d\nu_N(p)
  =
  \int_\KK \phi(p) dh_{\theta^*}(p).
\end{equation}
\end{thm}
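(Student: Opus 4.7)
The plan is to push the problem forward to $\Rm^2$ via the folding map $F_{\theta^*}$, invoke the classical multivariate central limit theorem, and then characterize the limit through the convex projection onto~$D_\rho$. The first step is to localize: by the partly-sticky law of large numbers (Theorem~\ref{t:LLN-intro}(ii)), almost surely $b_N$ eventually lies in any open sector $C_I$ containing $[A,B]$, and choosing $|I|<\pi$ makes $F_{\theta^*}$ a bijective local isometry from $C_I$ onto a neighborhood of $D_\rho$ in $\Rm^2$. Because the scaling $\beta(r,\theta)=(\beta r,\theta)$ commutes with $F_{\theta^*}$, the law of $\sqrt{N}\, b_N$ is encoded on this event by the law of $\sqrt{N}\, F_{\theta^*}(b_N)$.

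Next I would record the folded multivariate CLT in $\Rm^2$. The i.i.d.\ vectors $X_n := F_{\theta^*}(p_n)$ have mean $m_{\theta^*}$ whose first coordinate vanishes by the partly-sticky hypothesis, and whose second coordinate vanishes because $\theta^*$ is the midpoint of $[A,B]$ together with the identity $\partial_\theta m_{\theta,1}=m_{\theta,2}$ (the integrand joins continuously across the visibility boundary, so no boundary term appears). Given square-integrability~\eqref{squareint} and the covariance matrix $\Sigma$ from~\eqref{eq:g}, the classical CLT yields $\sqrt{N}\,\eta_{\theta^*,N}\Rightarrow g\sim \mathcal{N}(0,\Sigma)$.

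The heart of the argument is the identification
\[
  F_{\theta^*}(b_N) \;=\; \hat P_\rho\bigl(\eta_{\theta^*,N}\bigr) \;+\; o_P\bigl(N^{-1/2}\bigr)
\]
on the localization event. I would use the first-order characterization of $b_N$ as the unique minimizer of $\Gamma_N$ (Corollary~\ref{cor:Gammamax}, together with the folding-average relation in Lemma~\ref{lem:bNetaN2}). For $p\in C_I$ an expansion in folded coordinates gives
\[
  \Gamma_N(p) = \tfrac{1}{2}\, d_2\bigl(F_{\theta^*}(p),\eta_{\theta^*,N}\bigr)^2 + C_N + E_N\bigl(F_{\theta^*}(p)\bigr),
\]
where $C_N$ is independent of $p$ and $E_N$ captures the folding defect from sample points in the invisible region $\Ical_{\theta^*}$. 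Two facts then combine: first, strict concavity of $\theta\mapsto m_{\theta,1}$ at the endpoints of $[A,B]$ confines angular excursions of $b_N$ outside $[A,B]$ to rate $O_P(N^{-1/2})$, which together with $r_N = O_P(N^{-1/2})$ makes the $D_\rho$-excess of $F_{\theta^*}(b_N)$ of order $O_P(N^{-1})$; second, the contribution of $E_N$ to the optimality condition is of the same smaller order, since $E_N$ depends on products of radii and the invisible-mass average is controlled by the classical law of large numbers.

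To close, continuity and positive homogeneity of $\hat P_\rho$ give $\sqrt{N}\, \hat P_\rho(\eta_{\theta^*,N}) = \hat P_\rho(\sqrt{N}\,\eta_{\theta^*,N})$, so the continuous mapping theorem upgrades the previous step to $\sqrt{N}\, F_{\theta^*}(b_N)\Rightarrow \hat P_\rho(g)$. Pulling back through the bijection $F_{\theta^*}^{-1}: D_\rho\to C_{[A,B]}$ yields $\nu_N\Rightarrow g\circ \hat P_\rho^{-1}\circ F_{\theta^*}=h_{\theta^*}$, as required. The principal obstacle is the identification in the third paragraph: the barycenter $b_N$ is defined on the singular space $\KK$ through a nonsmooth variational principle, yet must coincide, to leading order, with the Euclidean convex projection of a folded average. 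Quantifying the folding defect $E_N$ and controlling the $O_P(N^{-1/2})$ angular excursions of $b_N$ past $\partial C_{[A,B]}$ together form the technical heart of the argument.
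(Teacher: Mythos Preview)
Your overall architecture---localize via the sticky LLN, apply the classical CLT to $\sqrt{N}\,\eta_{\theta^*,N}$, prove the identification $F_{\theta^*}(b_N)=\hat P_\rho(\eta_{\theta^*,N})+o_P(N^{-1/2})$, and conclude by the continuous mapping theorem---matches the paper exactly, and your observation that $m_{\theta^*,2}=0$ via $\partial_\theta m_{\theta,1}=m_{\theta,2}$ is correct. The gap is in how you justify the identification.

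You invoke ``strict concavity of $\theta\mapsto m_{\theta,1}$ at the endpoints of $[A,B]$'' to confine angular excursions of $b_N$ outside $[A,B]$ to $O_P(N^{-1/2})$. But in the partly sticky case $m_{\theta,1}\equiv 0$ on $[A,B]$, so $m_{B,1}=m_{B,2}=0$; the one-sided second derivative at $B$ is $D^+_\theta m_{B,2}=-w^-(B)$ by Lemma~\ref{lem:Dmtheta}, which need not be strictly negative. There is no strict concavity available in general, and your $O_P(N^{-1/2})$ angular bound is unproven. More importantly, you miss the structural fact that drives the paper's argument: Corollary~\ref{cor:noshadowmass} says that when $A\neq B$, $\mu(\Ical_\theta)=0$ for every $\theta\in[A,B]$. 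Hence your folding defect $E_N$ vanishes \emph{exactly}, almost surely, whenever $b_N\in C^+_{[A,B]}$ (via Lemma~\ref{lem:bNetaN2}); saying the invisible-mass average is ``controlled by the law of large numbers'' is too weak and obscures that the identification is exact on this event.

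The paper handles the remaining case $b_N\in C_{[A_\epsilon,B^\epsilon]}\setminus C^+_{[A,B]}$ differently: it never proves an $O_P(N^{-1/2})$ angular bound. Instead it fixes $\epsilon>0$, uses the LLN to get $b_N\in C_{[A_\epsilon,B^\epsilon]}$ eventually, and applies the ODE representation (Corollary~\ref{cor:odeintegrate}) to show $\|F_{\theta^*}b_N-\hat P_\rho m_{\theta^*}^N\|\le \epsilon\,X_N$ with $X_N=m^N_{B,1}+m^N_{B,2}-m^N_{A,1}-m^N_{A,2}$. Since $\Em[X_N]=0$ and $N\,\mathrm{Var}(X_N)$ is bounded, Chebyshev gives the $o_P(N^{-1/2})$ estimate after sending $\epsilon\downarrow 0$. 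Your $\Gamma_N$-expansion is a legitimate alternative route, but to complete it you would need the zero-shadow-mass fact for the interior and a genuine quantitative argument (not concavity) for the $\epsilon$-fringe.
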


The measure $h_{\theta^*}$ is supported on the closed sector
$C_{[A,B]}$.  The limit distribution $h_{\theta^*}$ can be decomposed
into a singular part and an absolutely continuous part:
$$%
  h_{\theta^*} = h_\mathit{sing} + h_\mathit{abs}.
$$
The absolutely continuous part is the restriction of a Gaussian to the
set $\text{int}(C_{[A,B]})=C^+_{(A,B)}$, which is the interior of the
closed sector $C_{[A,B]}$:
$$%
  h_\mathit{abs}(V) = g \circ F_{\theta^*}\big(V \cap C^+_{(A,B)}\big).
$$
When $A = B$, the sector $C_{[A,B]}$ has no interior and
$h_\mathit{abs} = 0$.  The singular part $h_\mathit{sing}$ is
supported on the boundary $\partial C_{[A,B]}$, and it includes an
atom $w \delta_0(p)$ at the origin with weight
$$%
  w = g\left(\left\{(r\cos\vartheta,r\sin\vartheta) \in \Rm^2 \mid r
  > 0 \text{ and }\vartheta \in [\rho +\pi/2, 3\pi/2-\rho]\right\}\right).
$$
However, not all of the mass in the singular part lies at the origin;
$h_\mathit{sing}$ also distributes mass continuously on the edges of
the sector $C_{[A,B]}$.  In particular,
$$%
  h_\mathit{sing}\big(\partial C_{[A,B]} \setminus \{0\}\big)
  =
  g\!\left(\left\{\!
  (r\cos\vartheta,r\sin\vartheta) \in \Rm^2 \!\mid\! r > 0, \vartheta
  \in [\rho, \rho +\pi/2)\cup(3\pi/2-\rho,2\pi-\rho]\right\}\!
  \right)\!.
$$

\subsubsection{Nonsticky case}\label{sub:non-sticky}

When $\mu$ is nonsticky, the mean of $\mu$ is $\bar b =
(r^*,\theta^*) \in \KK$, where $r^* = m_{\theta^*,1} > 0$ and
$\theta^*$ is the unique angle for which
$$%
  m_{\theta^*,1} = \max_\theta m_{\theta,1}.
$$
In particular this means that $\bar b \neq \0$, so the limit measure
obtained by renormalizing fluctuations of~$b_N$ lives on the tangent
space of~$\bar b$, which is isomorphic to~$\Rm^2$, not $\KK$ as in
sticky~cases.

With $\theta^*$ fixed, the family of random variables
$\{m_{\theta^*}^N\}_{N=1}^\infty$ satisfies a standard central limit
theorem in $\Rm^2$.  Specifically, let $g$ be the law of a
multivariate normal random variable on $\Rm^2$ with zero mean and
covariance matrix
$$%
  \Sigma = \int_{\Rm^2} (y- F_{\theta^*} \bar b) (y-F_{\theta^*} \bar
  b)^T d \tilde \mu_{\theta^*}(y).
$$
This matrix is well defined under the square-integrability
condition~(\ref{squareint}).  The standard central limit theorem
implies that as $N \to \infty$ the law of the random variable
$$%
  \sqrt{N} \left(m_{\theta^*}^N- F_{\theta^*} \bar b\right)
$$
in $\Rm^2$ converges weakly to $g$.

Although is it reasonable to expect that $F_{\theta^*} b_N$ would
satisfy the same central limit theorem, this might in fact not be the
case, depending on whether the closed shadow $\ol{\Ical_{\theta^*}}$
carries mass.  Define $\kappa \geq 0$ to be the random variable
\begin{equation}\label{eq:kappa}
\kappa(\omega)
  = \left \{
  \begin{array}{cc}
  \dfrac{w^+(\theta^*)}{r^*}&\text{if } e_2 \cdot F_{\theta^*}
  b_N(\omega) < 0,\\
\\
  \dfrac{w^-(\theta^*)}{r^*}&\text{if } e_2 \cdot F_{\theta^*} b_N(\omega) > 0,\\ \\
\;\; 0 & \quad \text{else},
  \end{array} \right.
\end{equation}
where (cf.~Figure~\ref{f:folding-map} for $\Ical^\pm_\theta$)
\begin{align}
\nonumber
  w^\pm(\theta) &= \int_{\Ical_\theta^\pm} d(\0,p) \,d\mu(p),
\end{align}
and
\begin{equation}\label{wpmdef}
  \begin{aligned}
\Ical_\theta^+ &= \KK \setminus \{(r,\theta') \mid r > 0 \text{
and } -\pi \leq \theta' - \theta <\pi \},
\\
\Ical_\theta^- &= \KK \setminus \{(r,\theta') \mid r > 0 \text{ and }
-\pi < \theta' - \theta \leq \pi \}.     
  \end{aligned}
\end{equation}
On the Borel sets $W$ in~$\Rm^2$ define the family
$$%
  \tilde \nu_N(W) = \Pm\big(\sqrt{N} (e_1 \cdot F_{\theta^*} b_N -
  r^*, (1 + \kappa)e_2 \cdot F_{\theta^*} b_N) \in W\big)
$$
of measures indexed by~$N$.  If
$\mu(\ol{\Ical_{\theta^*}}) = 0$, then $\kappa = 0$ and
$$%
 \tilde \nu_N(W) = \Pm\big(\sqrt{N} (F_{\theta^*} b_N -
 F_{\theta^*} \bar b)\in W \big),
$$
since $F_{\theta^*} \bar b = (r^*,0)$.

\begin{thm}\label{t:non-stickyclt}
If $\mu$ is nonsticky and square-integrable \eqref{squareint}, then
the measures $\{\tilde \nu_N\}_{N=1}^\infty$ converge weakly to $g$ as
$N \to \infty$.  That is, for any continuous, bounded function
$\phi:\Rm^2 \to \Rm$,
\begin{equation}\label{stickyclt2}
  \lim_{N \to \infty} \int_\Rm \phi(z) d\tilde \nu_N(z)
  =
  \int_\Rm \phi(z) dg(z).
\end{equation}
\end{thm}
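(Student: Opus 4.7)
We plan to reduce Theorem~\ref{t:non-stickyclt} to the standard multivariate central limit theorem in $\RR^2$ via a delta-method argument built on the first-order optimality conditions for the barycenter. By Theorem~\ref{t:LLN-intro}(3), $b_N\to\bar b=(r^*,\theta^*)$ almost surely; since $r^*>0$, for all $N$ past a random but almost surely finite threshold the barycenter $b_N=(r_N,\theta_N)$ has positive radius and $\theta_N$ lies in any prescribed neighborhood of~$\theta^*$. On this event we would compute $\nabla\Gamma_N(b_N)=0$ by differentiating $d(b_N,p_n)^2$ in the radial and angular directions, separating the visible points $\{p_n\notin\Ical_{\theta_N}\}$ from the shadow contribution $\{p_n\in\Ical_{\theta_N}\}$. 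Because the shadow term $(r_N+r_{p_n})^2$ is independent of $\theta_N$ and shadow points fold rigidly to $(-r_{p_n},0)$, the resulting identities are
\begin{equation}\label{eq:ns-foc}
  r_N=m^N_{\theta_N,1},\qquad m^N_{\theta_N,2}=0,
\end{equation}
equivalently $F_{\theta_N}b_N=m^N_{\theta_N}$ in Cartesian coordinates; this is essentially the content of Lemma~\ref{lem:bNetaN2}.

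Next we apply the standard multivariate CLT to the i.i.d.\ square-integrable vectors $\{F_{\theta^*}(p_n)\}$ to obtain $\sqrt{N}(m^N_{\theta^*}-m_{\theta^*})\Rightarrow g$. We then transfer this convergence to $F_{\theta^*}b_N$ through Taylor expansion. Writing
$$
  F_{\theta^*}b_N=\bigl(r_N\cos(\theta_N-\theta^*),\,r_N\sin(\theta_N-\theta^*)\bigr),
$$
and noting that $\theta_N-\theta^*$ and $r_N-r^*$ are each $O_\Pm(N^{-1/2})$, the two coordinates read $r_N+O(N^{-1})$ and $r^*(\theta_N-\theta^*)+O(N^{-1})$. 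Expanding $m^N_{\theta_N,i}$ about~$\theta^*$ in \eqref{eq:ns-foc} and using $\partial_\theta m_{\theta,1}|_{\theta^*}=0$ (since $\theta^*$ maximizes $m_{\cdot,1}$ by Proposition~\ref{p:casesSimple}) gives $r_N-r^*\approx m^N_{\theta^*,1}-m_{\theta^*,1}$. For the angular variable, the shadow contributes zero to $m_{\theta,2}=\int_{|\theta_p-\theta|<\pi}r_p\sin(\theta_p-\theta)\,d\mu(p)$, and since the integrand vanishes on the boundary $|\theta_p-\theta|=\pi$, Leibniz differentiation yields
$$
  \partial_\theta m_{\theta,2}\big|_{\theta^*}
  =-\int_{|\theta_p-\theta^*|<\pi}r_p\cos(\theta_p-\theta^*)\,d\mu(p)
  =-\bigl(r^*+w(\theta^*)\bigr),
$$
where $w(\theta^*)=\int_{\ol{\Ical_{\theta^*}}}r_p\,d\mu(p)$. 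Inverting $0=m^N_{\theta_N,2}\approx m^N_{\theta^*,2}+(\theta_N-\theta^*)\partial_\theta m_{\theta,2}|_{\theta^*}$ then produces
$$
  r^*(\theta_N-\theta^*)\;\approx\;\frac{r^*}{r^*+w(\theta^*)}\,m^N_{\theta^*,2}\;=\;\frac{1}{1+\kappa}\,m^N_{\theta^*,2},
$$
so that $(1+\kappa)\,e_2\cdot F_{\theta^*}b_N\approx m^N_{\theta^*,2}$, which matches the rescaling built into the definition of~$\tilde\nu_N$. Slutsky's theorem combined with $\sqrt{N}(m^N_{\theta^*}-m_{\theta^*})\Rightarrow g$ then delivers the claimed weak convergence.

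The main obstacle lies in executing this delta-method step rigorously near the shadow. Two subtleties matter. First, the derivative $\partial_\theta m_{\theta,2}|_{\theta^*}$ is genuinely one-sided when $\mu$ charges the closed rays at angles $\theta^*\pm\pi$: a point mass at $\theta^*+\pi$ enters the derivative formula from one side of $\theta^*$ but not the other, and the two one-sided derivatives equal $-(r^*+w^-(\theta^*))$ and $-(r^*+w^+(\theta^*))$ respectively. The sign of $\theta_N-\theta^*$ coincides with the sign of $e_2\cdot F_{\theta^*}b_N$ (since $r^*>0$) and selects the relevant side, exactly reproducing the piecewise definition of~$\kappa$; the zero-measure "else" branch contributes asymptotically vanishing probability. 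Second, the implicit inversion yielding $\theta_N$ as a function of $m^N$ must be controlled uniformly on the high-probability event where $\theta_N$ is near~$\theta^*$, using the strict concavity of $\theta\mapsto m_{\theta,1}$ on a neighborhood of $\theta^*$ from Proposition~\ref{p:casesSimple} to obtain a quantitative lower bound on $|\partial_\theta m_{\theta,2}|$ (via the identity $\partial_\theta m_{\theta,2}=-m_{\theta,1}-w(\theta)$ and $r^*>0$). Together with tightness estimates for the remainder and the nondegeneracy hypothesis~\eqref{nondegen}, these controls close the argument.
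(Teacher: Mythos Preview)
Your approach is essentially the paper's: both establish $|\theta_N-\theta^*|=O_\Pm(N^{-1/2})$, linearize the optimality condition $m^N_{\theta_N,2}=0$ around $\theta^*$, identify the one-sided slope $-(r^*+w^\mp(\theta^*))$ and match it to the piecewise definition of~$\kappa$, and finish with Slutsky from the Euclidean CLT for $m^N_{\theta^*}$. The paper uses the exact rotation formula of Corollary~\ref{cor:odeintegrate} rather than Taylor language, but the content is the same.

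The one place your sketch is too light is ``tightness estimates for the remainder.'' The error in your expansion $0\approx m^N_{\theta^*,2}+(\theta_N-\theta^*)\,\partial_\theta m_{\theta,2}\big|_{\theta^*}$ is \emph{not} a second-order Taylor term. Applying Corollary~\ref{cor:odeintegrate} to the empirical measure gives
\[
m^N_{\theta_N,2}-m^N_{\theta^*,2}
  =-\int_{\theta^*}^{\theta_N}\bigl[m^N_{\psi,1}+w_N(\psi)\bigr]\,d\psi,
\]
so after extracting $(\theta_N-\theta^*)\bigl(r^*+w^\mp(\theta^*)\bigr)$ the dominant leftover is $\int_{\theta^*}^{\theta_N}\bigl[w_N(\psi)-w^\mp(\theta^*)\bigr]\,d\psi$, and you need this to be $o_\Pm(N^{-1/2})$. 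Neither strict concavity nor a lower bound on $|\partial_\theta m_{\theta,2}|$ touches this: the issue is convergence of the \emph{empirical} shadow weight $w_N(\psi)$, uniformly over the \emph{random} interval $[\theta^*,\theta_N]$, to the one-sided population value $w^\mp(\theta^*)$. The paper isolates this as a separate Lemma~\ref{lem:V2}, whose proof (i)~splits off $w^-_N(\theta^*)-w^-(\theta^*)=O_\Pm(N^{-1/2})$ by the scalar CLT, (ii)~bounds $|w_N(\psi)-w^-_N(\theta^*)|$ by $\frac{1}{N}\sum_n d(\0,p_n)\,\mathbb{I}_{\Ical_\psi\Delta\Ical^-_{\theta^*}}(p_n)$, (iii)~replaces the random endpoint $\theta_N$ by a deterministic $\theta^*+C_\epsilon N^{-1/2}$ on a high-probability event, and (iv)~uses that the symmetric difference of shadows shrinks to $\emptyset$ so its $\mu$-moments vanish. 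Absent regularity assumptions on $\mu$ near the rays $\theta^*\pm\pi$ there is no rate for step~(iv), which is why only $o_\Pm(N^{-1/2})$ is obtained---but that is exactly enough. Your outline becomes a complete proof once this argument is inserted.
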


When $w^+(\theta^*) = w^-(\theta^*)$, Theorem~\ref{t:non-stickyclt}
implies that $\sqrt{N} (F_{\theta^*} b_N - F_{\theta^*} \bar b)$ is
Gaussian in the limit as $N \to \infty$.  When $w^+(\theta^*) =
w^-(\theta^*) > 0$, the fluctuation of $F_{\theta^*} b_N -
F_{\theta^*} \bar b$ in the $e_2$ direction is smaller than the
fluctuation of $m_{\theta^*,2}^N$; this is due to the presence of mass
in the closed shadow $\ol{\Ical_{\theta^*}}$.  On the other hand, if
$w^+(\theta^*) \neq w^-(\theta^*)$, then $\sqrt{N} (F_{\theta^*} b_N -
F_{\theta^*} \bar b)$ is not Gaussian in the limit; see
Example~\ref{e:Gaussian}.

\section{Examples}\label{s:examples}

Here are a few examples illustrating some phenomena described by the
limit theorems.

\begin{ex}[Partly sticky]\label{ex:sector}
Fix $\alpha > 2\pi$ and $\theta^*\in\Rm/\alpha\Zm$.  Let $K \geq 3$ be
an odd integer.  Let $\mu$ be the sum of $K$ atoms having mass $1/K$
at the points
$$%
  q_k
  =
  (1,\theta^*+\frac{2\pi}{K}k), \quad k
  =
  -\frac{K-1}{2}, \ldots, 0, \ldots, \frac{K-1}{2} \in \mathbb{Z}.
$$
That is,
$$%
  \mu = \frac{1}{K}  \sum_{k=1}^K \delta_{q_k}.
$$
In this case $m_\theta \leq 0$ for all $\theta \in\Rm/\alpha\Zm$,
while $m_\theta = 0$ if and only if $|\theta-\theta^*| \leq \pi/K$.
The limit distribution $h_{\theta^*}$ is supported on the sector
$$%
  C_{[-\frac{\pi}{K}, \frac{\pi}{K}]} = \Big\{(r,\theta)\:\Big|\: r \geq 0
  \text{ and }-\frac\pi K \leq \theta-\theta^* \leq \frac\pi K\Big\},
$$
including a singular part at the origin with weight $1-\frac{2}{K}\lfloor \frac{K+2}{4}\rfloor-\frac{1}{K}$
and a singular part on $\partial C_{[-\frac{\pi}{K}, \frac{\pi}{K}]} \setminus \{0\}$, with weight $\frac{2}{K}\lfloor \frac{K+2}{4}\rfloor$
cf.~Figure~\ref{f:ex1}.  The limit distribution does not vary
with~$\alpha$, given that $\alpha > 2\pi$.

\end{ex}

\begin{figure}[ht]
\centering
\begin{excise}{
\begin{tikzpicture}[scale=1.75]
  \coordinate (O) at (0,0);
  \coordinate (A) at (0:1);
  \coordinate (B) at ({360/5}:1);
  \coordinate (C) at ({2*360/5}:1);
  \coordinate (D) at ({-(2*360/5)}:1);
  \coordinate (E) at (-{360/5}:1);

  \coordinate (a) at ({180/5}:2.0);
  \coordinate (b) at (-{180/5}:2.0);
  \coordinate (aperp) at ({90+180/5}:2.0);
  \coordinate (bperp) at ({-90-180/5}:2.0);
  \coordinate (aa) at ($(a) +(.5,0)$);
  \coordinate (bb) at ($(b) +(.5,0)$);

  \draw[solid] (A)--(B) --(C)-- (D)--(E)--(A);

  \filldraw[fill=lightgray,opacity=0.7]
           (a)--(O)--(b) --(bb)--(aa)-- (a);
  \draw[dashed] (a) node[above]{$\frac{\pi}5$}-- (O) --(aperp);
  \draw[dashed] (bperp)--(O)--(b) node[below]{$-\frac{\pi}5$};

  \draw[right angle length=1ex,right angle symbol={a}{O}{aperp}];
  \draw[right angle length=1ex,right angle symbol={b}{O}{bperp}];

  \node at (-7:1.75) {$C_{[\frac\pi5, -\frac\pi5]}$};

  \draw[dotted,shorten >=-1cm,shorten <=-1cm] (C)--(O)--(D);
  \draw pic[draw=orange,arrows={latex[orange,fill=orange]-latex[orange,
        fill=orange]},solid,thick, angle eccentricity=1.05, angle
        radius=2cm,left,"$\text{angle}>\frac{2\pi}{5}$"]{angle=C--O--D};

  \filldraw (A) circle (1pt);
  \filldraw (B) circle (1pt);
  \filldraw (C) circle (1pt);
  \filldraw (D) circle (1pt);
  \filldraw (E) circle (1pt);
\end{tikzpicture}
}\end{excise}
\includegraphics[width=0.5\textwidth]{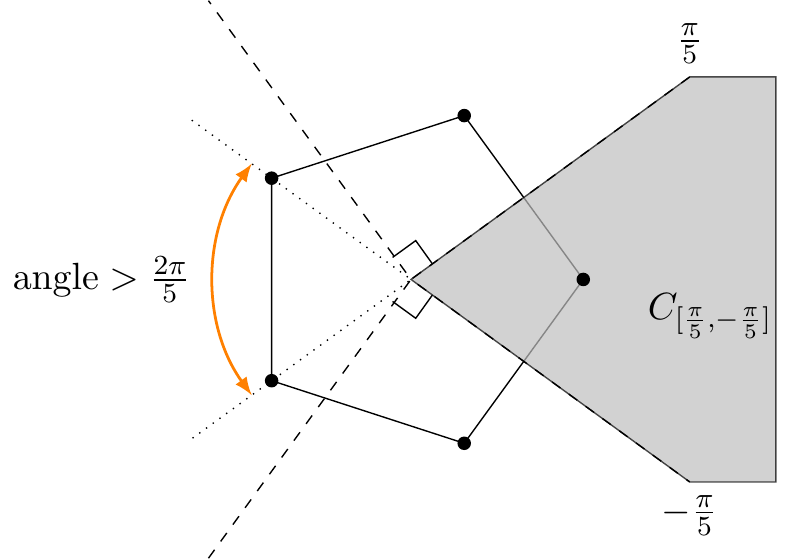}
\caption{\label{f:ex1}%
Example~\ref{ex:sector} in the case $K=5$, with $\theta^* = 0$.}
\end{figure}

In the limit, Example~\ref{ex:sector} gives the following.

\begin{ex}[Partly sticky with singular limit measure]
Fix $\alpha > 2\pi$ and $\theta^*\in\Rm/\alpha\Zm$.  Suppose $\mu$ is
uniform on the set
$$%
  S_1 = \{(r,\theta)\in\KK \mid r = 1\text{ and }-\pi < \theta-\theta^* < \pi\}.
$$
Then $m_\theta \leq 0$ for all $\theta \in \Rm/\alpha\Zm$, while
$m_\theta = 0$ only for $\theta = \theta^*$.  The limit distribution
$h_{\theta^*}$ puts an atom of mass $1/2$ at the origin, and half a
Gaussian on the ray $\{(r,\theta^*) \mid r > 0 \}$.  In particular,
$h_{\theta^*}$ has no absolutely continuous part.  As in
Example~\ref{ex:sector}, the limit distribution does not vary
with~$\alpha$, given that $\alpha > 2\pi$.
\end{ex}

\begin{ex}[Embedding the spider]
Suppose $\alpha > K\pi$.  Then there are angles $\theta_k \in
\Rm/\alpha\Zm$ for $k=1,\ldots,K$ such that $|\theta_k -\theta_j| >
\pi$ for all $j \neq k$.  Working with measures supported on the union
of the rays at angles $\theta_1,\ldots,\theta_K$ is equivalent to
working with probability distributions on the spider with $K$
legs---that is, an open book of dimension~$1$ with $K$ leaves,
cf.~\cite{HHMMN13}---by mapping the ray $\{(r,\theta_k) \in \KK \mid r
> 0\}$ to a leg of the spider.
\end{ex}

\begin{ex}[Full stickiness without square-integrability]\label{ex:no-mean}
Let $d\sigma= r dr \otimes d\theta$ denote the canonical measure on
$\KK$.  Here, $dr$ denotes the usual Lebesgue measure on $[0,\infty)$
and $d\theta$ the canonical quotient measure on $\Rm/\alpha\Zm$.  With
arbitrary but fixed $1 < \beta < 2$ let $\mu$ be the measure on $\KK$ with
density
$$%
  g(r,\theta)
  =
  \frac{2 \beta}{\alpha(\beta + 2)} \times
	\begin{cases}
	   \hfill 1 \hfill   &\text{if }0 \leq r \leq 1\\
	\frac{1}{r^{\beta+2}}&\text{if }1 \leq r < \infty.
	\end{cases}
$$
The integrability condition (\ref{integ}) is satisfied with $\bar r = \frac{2 \beta(\beta + 2)}{3(\beta + 1)(\beta - 1)}$.  Moreover, $m_{\theta,1} =  (2\pi -
\alpha)\bar r < 0$ for all $\theta\in \Rm/\alpha\Zm$.  By virtue of
Theorem~\ref{t:fullstickyclt}, there is a random integer $N^*$ such
that $b_N = \0$ for all $N\geq N^*$ almost surely.  On the other hand,
square-integrability does not hold, as $\int_{\KK} r_p^2\,d\mu(p) =
\infty$, and hence $\bar b$ is not defined.
\end{ex}

\begin{ex}[Non-Gaussian behavior in the nonsticky case]\label{e:Gaussian}
Fix $t > 3$ and let $\mu$ be the distribution on $\KK$ which puts mass $1/5$ at each of the points
\begin{align*}
  p_1 = (t,0),\quad p_2 = (1,\pi/2),\quad p_3 = (1,\pi), \quad p_4 = (2,- \pi), \quad  p_5 = (1,- \pi/2).
\end{align*}
The points $p_3$ and $p_4$ lie on the boundary of $\Ical_{\theta = 0}$, so under the folding map $F_{\theta}$ with $\theta = 0$, the points $p_3$ and $p_4$ collapse onto the axis $(-\infty,0) \times \{0\}$; points $p_2$ and $p_5$ map to the vertical axis $\{0\} \times \Rm$. We compute:
\[
m_{\theta = 0,1} = \frac{1}{5}(t + 0 - 1 - 2 + 0) = \frac{t - 3}{5} > 0.
\]
The push-forward $\tilde \mu_\theta = \mu \circ F_{\theta}^{-1}$ has symmetry about the $x$-axis when $\theta = 0$, which implies that $m_{\theta = 0,2} = 0$. By the results of Section \ref{s:moments} below, this implies $\theta \mapsto m_{\theta,1}$ is maximized at $\theta = \theta^* = 0$. However,
\[
w^+(\theta^*) = \frac{1}{5} d(\0,p_3)^2 = \frac{1}{5} \quad \text{and} \quad w^-(\theta^*) = \frac{1}{5} d(\0,p_4)^2 = \frac{4}{5}
\] 
in this case. As a consequence of Theorem~\ref{t:non-stickyclt} and the subsequent
remarks, the limit distribution of $\sqrt{N}(F_{\theta^*} b_N -
F_{\theta^*} \bar b)$ on $\Rm^2$ is non-degenerate and not Gaussian,
cf.~Figure~\ref{example-non-sticky-non-gaussian:fig}.
\begin{figure}
  \centering
\begin{excise}{
  \begin{tikzpicture}
    \pgfmathsetmacro{\t}{.25};
    \pgfmathsetmacro{\s}{.5};
    \pgfmathsetmacro{\ss}{(1/4)/(1+(\s+\t)/(5*\s))^2};
    \pgfmathsetmacro{\sss}{(1/4)/(1+(\t)/(5*\s))^2};
    \begin{axis}[ylabel=probability density, samples=80,
        xlabel=$y$]
       \addplot[domain=-.75:0, black] {\gauss{0}{\ss}};
       \addplot[domain=0:.75, black] {\gauss{0}{\sss}};
    \end{axis}
  \end{tikzpicture}
}\end{excise}
\includegraphics[width=0.6\textwidth]{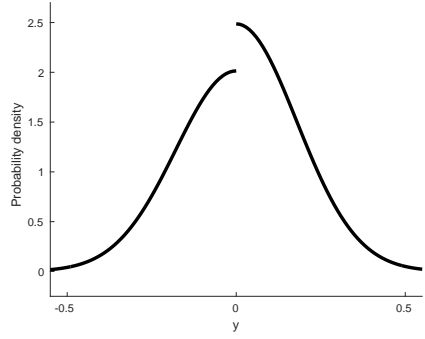}
\caption{\label{example-non-sticky-non-gaussian:fig}%
Depicted is the marginal in Example~\ref{e:Gaussian} for $t = 5$, i.e.\ the vertical component $y = \sqrt{N}e_2\cdot F_0b_N$ of the
folded empirical means multiplied by~$\sqrt{N}$.  For these, $y \to
\frac{1}{3} \mathbb{I}_{Z\geq 0} Z + \frac{2}{3} \mathbb{I}_{Z\leq 0} Z$ asymptotically in distribution as $N\to
\infty$ where $Z \sim {\mathcal N}\Big(0, 2/5\Big)$.}
\end{figure}

\end{ex}

\excise{
\begin{ex}[Non-Gaussian behavior in the nonsticky case]\label{e:Gaussian}
When the mean of a probability distribution $\mu$ is nonsticky, the
limit distribution of $\sqrt{N}\big(F_{\theta^*} b_N - (r^*,0)\big)$
on~$\Rm^2$, as described in Theorem~\ref{t:non-stickyclt}, can be
non-degenerate but fail to be Gaussian.  Let $\mu$ be the distribution
on $\KK$ with point mass $1/5$ at each of the points
\begin{align*}
  p_1 &= (r,0),\qquad p_2 = (t,\pi/2),\qquad p_3 = (s,\pi/2),\\
  p_4 &= (s,\pi), \qquad
  p_5 = (t,\alpha/2)
\end{align*}
such that $r -s -t > 0$ and $r,s,t > 0$.  Choose $s = r/2$.
Additionally, in case of $\alpha \geq 3\pi$ choose $t = r/4$ and in
case of $2\pi < \alpha < 3\pi$ set
$$%
  0 < t = \frac{r}{2}\cos(\alpha/2-\pi) <\frac{r}{2}.
$$
Then, choosing complex notation for simplicity,
$$%
m_\theta =
\begin{cases}
  \tfrac{1}{5}  (r -s)e^{-i\theta} -   \tfrac{1}{5}t &\text{for }
  0 \leq \theta \leq \alpha/2-\pi,
  \\[.5ex]
  \tfrac{1}{5} r e^{-i\theta} -\tfrac{1}{5}(s+t) &\text{for }
  -(\alpha/2-\pi)\leq \theta \leq 0.
 \end{cases}
$$
Therefore, as $\theta$ increases from $0$, the first moment $m_\theta$
starts (for $\theta=0$) at $r-s-t>0$ and moves into the lower half
plane along circular segments centered at $-t$.  As $\theta$ decreases
from $0$, the first moment $m_\theta$ moves into the upper half plane
along circular segments centered at $-s-t$.  In particular, $\mu$ is
nonsticky.  The choice of $r,s,t$ makes $m_\theta$ hit the imaginary
axis when $\theta = \theta_1 > 0$ and when $\theta = \theta_2 < 0$,
such that $t/(r-s) = \cos \theta_1$ and $(s+t)/r = \cos \theta_2$ and
such that $\theta_1$ and $\theta_2$ lie in the respective intervals
$0<\theta_1\leq \alpha/2-\pi$ and $\pi-\alpha/2 \leq \theta_2 < 0$,
where the above representation for $m_\theta$ is true.  Indeed, for
$\alpha \geq 3\pi$ this is trivial; and for $2\pi < \alpha < 3\pi$, by
hypothesis, $\frac{s+t}{r} > \frac{t}{r-s}$, which implies that
$\theta_1 > |\theta_2|$ as well as
$$%
  \theta_1 = \arccos (2t) = \frac{\alpha}{2}-\pi.
$$
Since $0 < \|m_\theta\| < m_0$ for $\theta \in (\theta_2,0) \cup
(0,\theta_1)$ with $m_{\theta_1,1} = 0 = m_{\theta_2,1}$ and
$m_{\theta,1} < 0$ for all $\theta\notin[\theta_2,\theta_1]$, it
follows that $m_0 = (r-s-t, 0)$ is the mean with
$$%
  w^+(0) = \frac{s+t}{5}\neq \frac{t}{5} = w^-(0).
$$
As a consequence of Theorem~\ref{t:non-stickyclt} and the subsequent
remarks, the limit distribution of $\sqrt{N}(F_{\theta^*} b_N -
F_{\theta^*} \bar b)$ on $\Rm^2$ is non-degenerate and not Gaussian,
cf.~Figure~\ref{example-non-sticky-non-gaussian:fig}.
\begin{figure}
  \centering
\begin{excise}{
  \begin{tikzpicture}
    \pgfmathsetmacro{\t}{.25};
    \pgfmathsetmacro{\s}{.5};
    \pgfmathsetmacro{\ss}{(1/4)/(1+(\s+\t)/(5*\s))^2};
    \pgfmathsetmacro{\sss}{(1/4)/(1+(\t)/(5*\s))^2};
    \begin{axis}[ylabel=probability density, samples=80,
        xlabel=$y$]
       \addplot[domain=-.75:0, black] {\gauss{0}{\ss}};
       \addplot[domain=0:.75, black] {\gauss{0}{\sss}};
    \end{axis}
  \end{tikzpicture}
}\end{excise}
\includegraphics[width=0.5\textwidth]{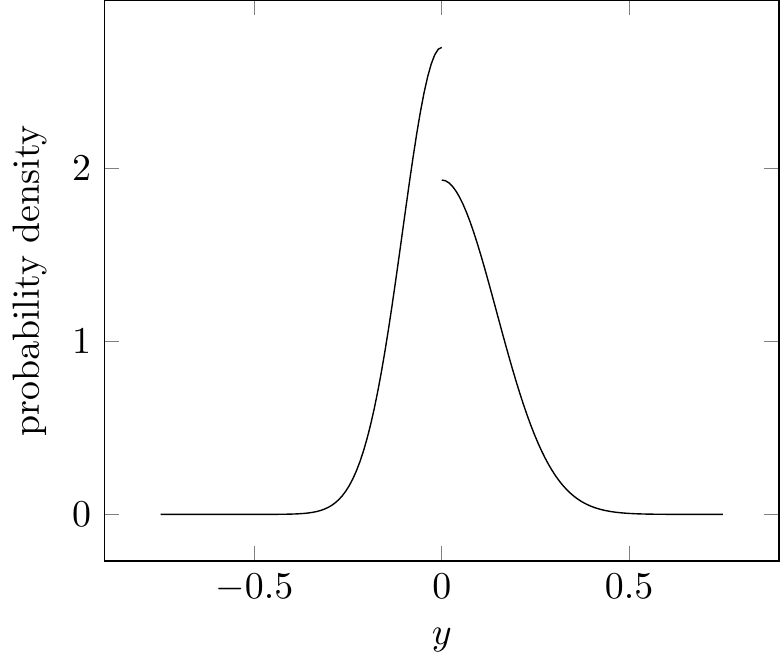}
\caption{\label{example-non-sticky-non-gaussian:fig}%
Depicted is the marginal in Example~\ref{e:Gaussian} for $r = 1 = 2s =
4t$, i.e.\ the vertical component $y = \sqrt{N}e_2\cdot F_0b_N$ of the
folded empirical means multiplied by~$\sqrt{N}$.  For these, $y \to
1_{Z\geq 0} Z + 1_{W\leq 0} W$ asymptotically in distribution as $N\to
\infty$ where $Z \sim {\mathcal
N}\Big(0,\frac{1/4}{(1+(s+t)/5(r-s-1))^2}\Big)\mbox{ and } W \sim
{\mathcal N}\Big(0,\frac{1/4}{(1+t/5(r-s-1))^2}\Big)$ because,
being binomial, ${\rm Var}(e_2\cdot Nm_0^N)=N/4$.}
\end{figure}

%
%
\end{ex}
}

\section{Folding isolated hyperbolic planar singularities}\label{s:hyperbolic}

This section elaborates on the geometric structure of the kale $\KK$
defined in~\eqref{eq:kale}.

\begin{lem}[Openness of visibility]\label{visibility-cont-property}
If $p$ is fully visible from the angle $\theta_0$ then it is fully
visible from all $\theta$ sufficiently close to~$\theta_0$.  The same
is true for invisibility.
\end{lem}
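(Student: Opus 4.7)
The plan is to reduce the statement to the continuity of the angular distance function $\theta \mapsto |\theta' - \theta|_\alpha$ on the circle $\Rm/\alpha\Zm$. Unpacking Definition~\ref{d:IcalA}, a point $p = (r,\theta') \in \KK$ with $r > 0$ lies in $\Ical_\theta$ iff $|\theta' - \theta| > \pi$, so the complement $\KK \setminus \ol{\Ical_\theta}$ of the closed shadow is precisely the set of points with $r > 0$ and $|\theta' - \theta| < \pi$, together with the origin $\0$. Likewise, $p$ is fully invisible from $\theta$ iff $r > 0$ and $|\theta' - \theta| > \pi$.

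The case $p = \0$ is immediate: the origin is fully visible from every angle, and never invisible from any angle. So fix $p = (r,\theta')$ with $r > 0$, and set $\varphi(\theta) = |\theta' - \theta|_\alpha$. Since $\varphi$ is the quotient metric on $\Rm/\alpha\Zm$ restricted to a fiber, it is continuous in $\theta$; indeed, $\varphi(\theta) = \min_{n \in \Zm}|n\alpha + \theta' - \theta|$ is a minimum of a locally finite family of continuous functions, and the triangle inequality gives $|\varphi(\theta) - \varphi(\theta_0)| \leq |\theta - \theta_0|_\alpha$.

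Now suppose $p$ is fully visible from $\theta_0$, so that $\varphi(\theta_0) < \pi$. By continuity, $\varphi(\theta) < \pi$ for all $\theta$ in some open neighborhood of $\theta_0$, which is exactly the statement that $p$ remains fully visible from each such $\theta$. If instead $p$ is invisible from $\theta_0$, so that $\varphi(\theta_0) > \pi$, then again by continuity $\varphi(\theta) > \pi$ throughout some open neighborhood of $\theta_0$, which gives the invisibility statement.

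There is no real obstacle here; the lemma is a direct consequence of continuity of the circle metric $|\,\cdot\,|_\alpha$ and the fact that the defining inequalities $|\theta' - \theta| < \pi$ and $|\theta' - \theta| > \pi$ are both strict and hence cut out open conditions on $\theta$. The only thing worth remarking is that partial visibility, which corresponds to the boundary condition $|\theta' - \theta| = \pi$, is \emph{not} an open condition and is indeed destroyed by small perturbations of $\theta$, consistent with Figure~\ref{f:folding-map}.
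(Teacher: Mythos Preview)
Your proof is correct and follows essentially the same idea as the paper's one-line argument, namely that the defining conditions $|\theta'-\theta|<\pi$ and $|\theta'-\theta|>\pi$ are open because the angular distance is continuous. If anything, you are more careful than the paper: its proof literally reads ``the sets $\Ical_\theta$ and $\KK\setminus\ol{\Ical_\theta}$ are open,'' which on its face is a statement about varying $p$ with $\theta$ fixed, whereas the lemma concerns varying $\theta$ with $p$ fixed; your explicit use of $\varphi(\theta)=|\theta'-\theta|_\alpha$ and its continuity in $\theta$ makes the correct dependence transparent.
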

\begin{proof}
The sets $\Ical_\theta$ and $\KK\setminus
\ol{\Ical_\theta}$ are open.
\end{proof}

Recall that $d_2(z,w):\Rm^2 \times \Rm^2 \to [0,\infty)$ denotes the
Euclidean metric in $\Rm^2$.  The following lemma follows easily from
the definitions of $F_\theta$ and the metric $d$ on $\KK$.

\begin{lem}\label{l:distance-under-folding}
For any two points $p_1,p_2 \in \KK$ and any angle $\theta \in
\RR/\alpha\ZZ$,
$$%
  d_2\big( F_{\theta} (p_1), F_{\theta} (p_2)\big) \leq d(p_1,p_2),
$$
with strict inequality if $p_1 \in \Ical_\theta$, $p_2 \in \KK
\setminus \ol{\Ical_\theta}$, and $p_2$ has an angle different from $\theta$.  Moreover, for any $p\in \KK$ and $\theta \in
\RR/\alpha\ZZ$,
$$%
  d_2\big( F_{\theta} (r,\theta), F_{\theta} (p)\big) = d\big((r,\theta),p\big).
  \qedhere
$$
\end{lem}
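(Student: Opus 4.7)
The plan is a case analysis based on the visibility of $p_1, p_2 \in \KK$ from the angle $\theta$, comparing the $\KK$-metric
$$
d(p_1,p_2)^2 = r_1^2 + r_2^2 - 2 r_1 r_2 \cos\!\big(\min(|\theta_1 - \theta_2|_\alpha, \pi)\big)
$$
(writing $p_i = (r_i, \theta_i)$, and interpreting the formula as $(r_1 + r_2)^2$ when the cosine argument is $\pi$) against the Euclidean distance $d_2\big(F_\theta(p_1), F_\theta(p_2)\big)^2 = r_1^2 + r_2^2 - 2 r_1 r_2 \cos(\psi)$, where $\psi$ is the angular separation of the folded images in $\Rm^2$. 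The cases $r_i = 0$ reduce immediately, so I may assume $r_1, r_2 > 0$ throughout.

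For the equality claim with $p_1 = (r, \theta)$, one has $F_\theta(p_1) = (r, 0)$. If $p_2$ is fully visible, then $F_\theta(p_2) = (r_2, \theta_2 - \theta)$ in polar coordinates with $\theta_2 - \theta$ the canonical representative in $[-\pi, \pi]$ from Definition~\ref{d:interval}; because $\alpha > 2\pi$, this forces $|\theta_2 - \theta|_\alpha = |\theta_2 - \theta|$, so the two law-of-cosines expressions coincide. If instead $p_2 \in \ol{\Ical_\theta}$, then $F_\theta(p_2) = (-r_2, 0)$ in Cartesian coordinates and both distances equal $r + r_2$.

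For the general inequality I distinguish three cases. (i) Both $p_i$ fully visible: writing $\phi_i = \theta_i - \theta \in [-\pi,\pi]$, it suffices to show $\cos(\phi_1 - \phi_2) \geq \cos\!\big(\min(|\theta_1 - \theta_2|_\alpha, \pi)\big)$, which I verify by splitting on $|\phi_1 - \phi_2| \leq \pi$ versus $|\phi_1 - \phi_2| > \pi$ and on whether $|\theta_1 - \theta_2|_\alpha$ is realized by $|\phi_1 - \phi_2|$ or by $\alpha - |\phi_1 - \phi_2|$; in each subcase $\alpha > 2\pi$ forces the Euclidean cosine to dominate. (ii) $p_1 \in \ol{\Ical_\theta}$ and $p_2$ fully visible: here $F_\theta(p_1) = (-r_1, 0)$ gives $d_2^2 = r_1^2 + r_2^2 + 2 r_1 r_2 \cos(\theta_2 - \theta)$, and the key estimate is
$$
|\theta_1 - \theta_2|_\alpha \;\geq\; |\theta_1 - \theta|_\alpha - |\theta_2 - \theta| \;\geq\; \pi - |\theta_2 - \theta|,
$$
using the triangle inequality for the metric $|\cdot|_\alpha$ on $\Rm/\alpha\Zm$ together with $|\theta_1 - \theta|_\alpha \geq \pi$. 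When $|\theta_1 - \theta_2|_\alpha \leq \pi$ this yields $\cos(|\theta_1 - \theta_2|_\alpha) \leq -\cos(\theta_2 - \theta)$ and hence $d^2 \geq d_2^2$; otherwise $d^2 = (r_1 + r_2)^2 \geq d_2^2$ directly from $\cos(\theta_2 - \theta) \leq 1$. (iii) Both $p_i \in \ol{\Ical_\theta}$: then $d_2 = |r_1 - r_2| \leq d$ by the triangle inequality through $\0$.

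For the strict inequality, the hypotheses place us in case~(ii) with $p_1 \in \Ical_\theta$ strictly (so $|\theta_1 - \theta|_\alpha > \pi$) and $\theta_2 \neq \theta$. Strictness of $|\theta_1 - \theta|_\alpha > \pi$ makes the displayed chain strict, yielding $\cos(|\theta_1 - \theta_2|_\alpha) < -\cos(\theta_2 - \theta)$ in the subcase $|\theta_1 - \theta_2|_\alpha \leq \pi$; in the opposite subcase, $\theta_2 \neq \theta$ combined with $|\theta_2 - \theta| < \pi$ forces $\cos(\theta_2 - \theta) < 1$, so $d_2^2 < (r_1 + r_2)^2 = d^2$. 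I expect the main obstacle to be careful bookkeeping of angular representatives in case~(i): the Euclidean angle $\phi_1 - \phi_2$ may lie in $[-2\pi, 2\pi]$ rather than $[-\pi, \pi]$, and one must verify in each subcase that its cosine is never smaller than the cosine of the canonical $|\cdot|_\alpha$-representative clamped at $\pi$---this is exactly where the hypothesis $\alpha > 2\pi$ enters decisively.
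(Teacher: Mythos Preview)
Your proof is correct. The paper does not actually supply a proof of this lemma; it simply states that the result ``follows easily from the definitions of $F_\theta$ and the metric $d$ on $\KK$'' and moves on. Your case analysis fills in exactly the details the paper omits, and each step checks out---in particular, the triangle-inequality estimate $|\theta_1-\theta_2|_\alpha \geq |\theta_1-\theta|_\alpha - |\theta_2-\theta|$ in case~(ii) is the right mechanism for both the weak inequality and (after noting $|\theta_1-\theta|_\alpha>\pi$ strictly) the strict one, and the subcase bookkeeping in case~(i) indeed reduces to $\alpha>2\pi$ via $2\pi-\delta\leq\alpha-\delta$. One very minor point: you assert $r_1,r_2>0$ at the outset; for the strict-inequality claim this is automatic from the hypotheses, since $p_1\in\Ical_\theta$ forces $r_1>0$ by definition, and $\0\in\ol{\Ical_\theta}$ (as a limit of points in $\Ical_\theta$) so $p_2\in\KK\setminus\ol{\Ical_\theta}$ forces $r_2>0$ as well.
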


\begin{lem}\label{lem:foldththp}
If $(1,\theta') \in \ol{\Ical_\theta}$ and $p \in \KK$, then
\begin{equation}\label{foldththp}
  e_1 \cdot F_\theta(p) \leq - e_1 \cdot F_{\theta'}(p).
\end{equation}
If $(1,\theta') \in \Ical_\theta$ then equality holds precisely when
$p \in R_{\theta,\theta'}$.
\end{lem}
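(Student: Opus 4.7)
The plan is to reduce both $e_1 \cdot F_\theta(p)$ and $e_1 \cdot F_{\theta'}(p)$ to cosines of saturated angular distances on $[0,\pi]$ and then combine the triangle inequality on $\Rm/\alpha\Zm$ with a sum-to-product identity.

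Writing $p=(r,\phi)$, the first step is a direct check from Definition~\ref{d:folding}, going through all three cases of the piecewise formula, that
$$
  e_1 \cdot F_\theta(p) = r\cos u,\qquad e_1 \cdot F_{\theta'}(p) = r\cos v,
$$
where $u = \min(|\phi-\theta|_\alpha,\pi)$ and $v = \min(|\phi-\theta'|_\alpha,\pi)$ both lie in $[0,\pi]$. In the visible cases this uses evenness of $\cos$ together with the convention from Definition~\ref{d:interval} identifying $\phi-\theta$ with its representative in $[-\pi,\pi]$; in the invisible cases $F_\theta(p)=(-r,0)$, which matches $r\cos\pi$.

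The crucial step is to establish $u+v\geq\pi$. For this I would combine the triangle inequality
$$
  |\phi-\theta|_\alpha + |\phi-\theta'|_\alpha \;\geq\; |\theta-\theta'|_\alpha \;\geq\; \pi
$$
with a short case split: if both summands on the left are at most $\pi$, then $u+v$ equals that left-hand side, which is $\geq \pi$; otherwise at least one of $u,v$ already equals $\pi$, so $u+v\geq\pi$ is immediate. With this in hand, the sum-to-product identity
$$
  \cos u + \cos v = 2\cos\tfrac{u+v}{2}\cos\tfrac{u-v}{2}
$$
proves~(\ref{foldththp}): $(u+v)/2\in[\pi/2,\pi]$ makes the first factor nonpositive, while $(u-v)/2\in[-\pi/2,\pi/2]$ makes the second nonnegative, and the overall product is then $\leq 0$; multiplying by $r\geq 0$ finishes.

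For the equality clause under the strict hypothesis $(1,\theta')\in\Ical_\theta$, i.e., $|\theta-\theta'|_\alpha>\pi$, I would argue as follows. The case $r=0$ is immediate since $\0\in R_{\theta,\theta'}$. For $r>0$, equality in the displayed product forces either $(u-v)/2=\pm\pi/2$, so $\{u,v\}=\{0,\pi\}$ and the vanishing entry pins $\phi$ to $\theta$ or to $\theta'$, placing $p$ on one of the two rays of $R_{\theta,\theta'}$; or $u+v=\pi$ with $u,v\in(0,\pi)$, in which case both angular distances are strictly below $\pi$, so $u+v=|\phi-\theta|_\alpha+|\phi-\theta'|_\alpha\geq|\theta-\theta'|_\alpha>\pi$, a contradiction. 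I expect the main subtlety to be exactly this interplay between the $\min(\cdot,\pi)$ saturation and the triangle inequality, since a naive substitution of $u,v$ for the raw angular distances weakens the bound; the visibility case split resolves it cleanly, because whenever saturation is active the bound $u+v\geq\pi$ is automatic.
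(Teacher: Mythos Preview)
Your proof is correct and is cleaner than the paper's. Both arguments start from the same reduction $e_1\cdot F_\theta(p)=r\cos(\min\{|\phi-\theta|,\pi\})$, but from there the paper proceeds by a case analysis: it assumes without loss of generality that $|\phi-\theta|\geq|\phi-\theta'|$, disposes of the case where both cosines are negative, then splits further according to whether $|\phi-\theta|\geq\pi$ or $|\phi-\theta|<\pi$, handling the last sub-case by a geometric picture in the plane (placing $\theta'$ on the horizontal axis and comparing the angle from $\phi$ to $\theta-\pi$ with the angle from $\phi$ to $\theta'$). Your route avoids all of this branching: the triangle inequality on $\Rm/\alpha\Zm$ plus the observation that saturation of $\min(\cdot,\pi)$ already forces $u+v\geq\pi$ gives the key bound uniformly, and the sum-to-product identity turns that bound directly into the sign of $\cos u+\cos v$. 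The equality analysis is likewise more compact: the paper tracks equality through each case separately, whereas you read it off from which factor of $2\cos\frac{u+v}{2}\cos\frac{u-v}{2}$ vanishes, and the strict hypothesis $|\theta-\theta'|_\alpha>\pi$ kills the $u+v=\pi$ branch in one line. What the paper's approach buys is a concrete geometric description of the borderline configuration; what yours buys is brevity and the absence of any WLOG or picture.
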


\begin{rmk}\label{r:symmetric}
The conditions $(1,\theta') \in \ol{\Ical_\theta}$ and
$(1,\theta') \in \Ical_\theta$ could equivalently be expressed as
$|\theta - \theta'| \geq \pi$ and $|\theta - \theta'| > \pi$,
respectively; in particular, they are symmetric in $\theta$ and
$\theta'$.
\end{rmk}

\begin{proof}[Proof of Lemma~\ref{lem:foldththp}]
Both assertions are obvious for $p = \0$.  Hence assume $p \neq \0$,
i.e.\ that $p = (r,\hat\theta) \in \KK$ with $r > 0$.  Then
$$%
  e_1 \cdot F_\theta(p)
  =
  r \cos\big(\!\min\{|\hat\theta - \theta|,\pi\}\big),
$$
and similarly with $\theta'$ in place of~$\theta$.  The statement of
the lemma is symmetric in $\theta$ and~$\theta'$ by
Remark~\ref{r:symmetric}, so without loss of generality assume
$|\hat\theta - \theta| \geq |\hat\theta - \theta'|$.

Then $e_1 \cdot F_{\theta}(p)$ and $e_1 \cdot F_{\theta'}(p)$ are both
negative---and thus~(\ref{foldththp}) with strict inequality is
automatic---unless $|\hat\theta - \theta'| \leq \pi/2$.  Henceforth
assume $|\hat\theta - \theta'| \leq \pi/2$.  Then $|\hat\theta -
\theta| \geq \pi/2$ because $|\theta - \theta'| \geq \pi$.

If $|\hat\theta - \theta| \geq \pi$, then the left side
of~(\ref{foldththp}) is~$-r$ while the right side is
$-r\cos|\hat\theta - \theta'|$; the cosine is nonnegative because
$|\hat\theta - \theta'| \leq \pi/2$, and it achieves the value~$1$
only when $\hat\theta = \theta'$, which is when $p \in
R_{\theta,\theta'}$, as desired.

The only remaining case is where $|\hat\theta - \theta'| \leq \pi/2
\leq |\hat\theta - \theta| < \pi$.
Since $(1,\theta') \in \ol\Ical_\theta$ but $|\hat\theta - \theta| <
\pi$, the ray $\hat\theta$ must lie between $\theta'$ and~$\theta$, in
the sense that $|\theta - \theta'| = |\theta - \hat\theta| +
|\hat\theta - \theta'|$ and passing through this angle from $\theta'$
to~$\theta$ hits the ray at $\hat\theta$ along the way.  This picture
is easily drawn in the Euclidean plane~$\RR^2$, with $\theta'$ along
the horizontal axis, $\hat\theta$ in the first (northeast) quadrant,
and $\theta - \pi$ between $\theta'$ and~$\hat\theta$, possibly equal
to~$\theta'$ but never~$\hat\theta$.  (The reflection of this picture
across the horizontal axis is possible, as well, but as cosine is an
even function it changes none of the algebra.)  Using $\theta - \pi$
instead of~$\theta$ is handy because $-e_1 \cdot F_\theta(p)$ is the
cosine of the angle~$\beta$ between $\theta - \pi$ and~$\hat\theta$.
The desired result follows because $\beta \leq |\hat\theta - \theta'|
\leq \pi/2$ and
cosine is strictly decreasing on the interval $[0,\pi/2]$ while
$\beta = |\hat\theta - \theta'|$ only when $\theta - \theta' = \pi$,
which is the case $(1,\theta') \in \ol\Ical_\theta \setminus
\Ical_\theta$.\qedhere

\end{proof}

\section{Barycenters and first moments of probability measures on the kale}
\label{s:moments}

This section describes properties of the functions $\theta \mapsto
m_{\theta}$ and $\theta \mapsto m^N_{\theta}$; the behavior of these
functions aids in understanding how the barycenters $b_N$ behave in
the limit \mbox{$N \to \infty$}.  Recall that the barycenter is the
minimizer of $\Gamma(p)$, defined in~\eqref{eq:gammaDef}.  To motivate
what comes next and better explain the connection between barycenters
and the first component $m_{\theta,1}$ of folded means $m_\theta$, we
recall the analogous calculation for $\Rm^n$.  Define $\gamma\colon
\Rm^n \rightarrow [0,\infty)$ by
$$%
  \gamma(x) = \frac12\int_{\Rm^n} \| x-y\|^2 d\nu(y)
$$
for a given probability measure $\nu$ on~$\Rm^n$.  The barycenter
of~$\nu$ in this Euclidean setting is the point $x \in \Rm^n$ that
minimizes $\gamma(x)$.  Observe that
$$%
  \|x - y\|^2
  =
  \|x\|^2 - 2x\cdot y + \|y\|^2
  =
  \|x\|^2 - 2\|x\|(\hat x\cdot y) + \|y\|^2,
$$
where $\hat x = x/\|x\|$ is the unit vector in the direction of~$x$.
Hence if $\nu$ is square-integrable, and
\begin{equation}\label{eq:gtoG}
  \gamma(x) = \frac12 \|x\|^2 -\|x\| \int_{\Rm^n} (\hat x \cdot y)\,
  d\nu(y) + \gamma(0),
\end{equation}
then the minimizer of~$\gamma$ lies in the direction $\hat x$ that
maximizes
\begin{equation}\label{eq:RnIP}
  m\cdot \hat x = \int_{\Rm^n} (\hat x \cdot y)\, d\nu(y)
\end{equation}
and at a distance from the origin equal to the maximum value of
\eqref{eq:RnIP}.  Here $m \in \Rm^n$ is the mean of~$\nu$.  Hence if
$\hat x^*$ is the maximizing direction, then the barycenter can be
written in polar coordinates $(r, \hat x)$ as $(m \cdot \hat x^*, \hat
x^*)$.  From this it follows that the solution is the usual mean in
Euclidean space.  Even when the term $\gamma(0)$ in \eqref{eq:gtoG} is
infinite, it is reasonable to take this as the definition of mean.  To
make the maximization of~\eqref{eq:RnIP} well defined, one only needs
to assume $\nu$ is integrable rather than square-integrable.

A similar calculation can be done in the kale setting.  Since the
folding map rotates the direction $\theta$ back to the direction $e_1$
in the Euclidean plane, $m_{\theta,1}$ is exactly analogous to
\eqref{eq:RnIP}.  The following lemma proves the expression analogous
to \eqref{eq:gtoG} in the setting of $\KK$.

\begin{lem}\label{lem:Gammarep}
Suppose a measure~$\mu$ is square-integrable~(\ref{squareint}).  Then
for all points $(r,\theta) \in \KK$,
$$%
  \Gamma(r,\theta) = \frac{r^2}{2} - r\,m_{\theta,1} + \Gamma(\0).
$$
\end{lem}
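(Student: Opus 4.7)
The plan is to rewrite $\Gamma(r,\theta)$ by passing through the folding map $F_\theta$, which reduces the computation to the familiar expansion of a squared Euclidean norm in $\RR^2$. The key observation is Lemma~\ref{l:distance-under-folding}: because $(r,\theta)$ lies on the ray at angle~$\theta$, distances from $(r,\theta)$ to any other point of~$\KK$ are preserved by $F_\theta$. Explicitly, for every $q \in \KK$,
$$
  d\bigl((r,\theta),q\bigr) \;=\; d_2\bigl(F_\theta(r,\theta), F_\theta(q)\bigr).
$$

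First I would observe that, in the Cartesian coordinates on~$\RR^2$ used for the image of $F_\theta$, the image $F_\theta(r,\theta)$ is simply $r e_1 = (r,0)$, since folding centered at $\theta$ aligns the ray at $\theta$ with the positive horizontal axis and preserves radius. Second, I would expand
$$
  \bigl\| r e_1 - F_\theta(q)\bigr\|^2 \;=\; r^2 \;-\; 2 r\, e_1 \cdot F_\theta(q) \;+\; \bigl\|F_\theta(q)\bigr\|^2.
$$
The last term equals $d(\0,q)^2$ because the folding map preserves radius from the origin (another consequence of Lemma~\ref{l:distance-under-folding} applied with the ray at angle~$\theta$ and the origin, or directly from the definition of $F_\theta$, which leaves the radial coordinate untouched).

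Third, I would integrate the resulting pointwise identity
$$
  d\bigl((r,\theta),q\bigr)^2 \;=\; r^2 \;-\; 2 r\, e_1 \cdot F_\theta(q) \;+\; d(\0,q)^2
$$
against $d\mu(q)$ and divide by two. Square-integrability~(\ref{squareint}) ensures that each term is integrable, justifying the term-by-term decomposition. Using Definition~\ref{d:moments} for $m_{\theta,1}$ and the definition of $\Gamma(\0)$ then yields
$$
  \Gamma(r,\theta) \;=\; \frac{r^2}{2} \;-\; r\, m_{\theta,1} \;+\; \Gamma(\0),
$$
as desired. There is no real obstacle here; the content of the lemma is essentially the isometric property of the folding map along its central ray, which has already been packaged into Lemma~\ref{l:distance-under-folding}. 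The only point requiring a moment of care is the identification $\|F_\theta(q)\| = d(\0,q)$, but this follows immediately from the radial preservation built into Definition~\ref{d:folding}.
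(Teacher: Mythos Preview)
Your proof is correct and follows essentially the same route as the paper's: both invoke Lemma~\ref{l:distance-under-folding} to replace $d((r,\theta),q)$ by $d_2(re_1,F_\theta(q))$, expand the squared Euclidean norm, and identify the resulting three terms with $r^2/2$, $-r\,m_{\theta,1}$, and $\Gamma(\0)$ respectively. Your explicit remark that $\|F_\theta(q)\|^2 = d(\0,q)^2$ makes the identification of the constant term slightly cleaner than the paper's componentwise version, but the argument is the same.
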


\begin{rmk}\label{r:squareint}
As a consequence of $\|F_\theta(r,\theta)\|\leq r$, the pushforward
$\tilde \mu_\theta = \mu\circ F^{-1}_\theta$ is also square-integrable
when $\mu$ is.
\end{rmk}

\begin{proof}[Proof of Lemma~\ref{lem:Gammarep}]
For $p = (r,\theta)$, using Lemma~\ref{l:distance-under-folding},
\begin{align*}
\Gamma(p)
& = \frac{1}{2} \int_\KK d_2( F_\theta p, F_\theta q)^2 \,d\mu(q) \\
& = \frac{1}{2} \int_\KK \big(|e_1 \cdot F_\theta p - e_1 \cdot F_\theta
    q|^2 + |e_2 \cdot F_\theta p - e_2 \cdot F_\theta q|^2 \big)\,d\mu(q)\\
& = \frac{1}{2} \int_\KK \big(|r - e_1 \cdot F_\theta q|^2 + | e_2 \cdot
    F_\theta q|^2  \big)\,d\mu(q)\\
& = \frac{r^2}{2} - r \int_\KK  e_1 \cdot F_\theta q \,d\mu(q) +
    \int_\KK |\big( e_1 \cdot F_\theta q|^2 + | e_2 \cdot F_\theta q|^2 \big)
    \,d\mu(q).\qedhere
    \end{align*}
\end{proof}

Motivated by a need to understand properties of the function
$m_{\theta,1}$, we now explore its differentiability.  Define
one-sided derivatives of $g\colon \Rm/\alpha\Zm \to \Rm$ at $\theta
\in \Rm/\alpha\Zm$ by
$$%
  D^+_\theta g(\theta)
  =
  \lim_{\substack{\theta'\to\theta \\ \theta'\in(\theta,\theta+\pi)}}
  \frac{g(\theta') - g(\theta)}{\theta' - \theta}
\quad\text{and}\quad
  D^-_\theta g(\theta)
  =
  \lim_{\substack{\theta'\to\theta \\ \theta'\in(\theta-\pi,\theta)}} 
  \frac{g(\theta') - g(\theta)}{\theta' - \theta}.
$$
Recall Definition~\ref{d:interval} of the (not necessarily positive)
real number $\theta' - \theta$.  When the one-sided derivatives agree,
write $\frac{d}{d\theta} g(\theta)$ or $g'(\theta)$ as usual.

\begin{lem}\label{lem:Dmtheta}
The function $m_{\theta,1}\colon\Rm/\alpha\Zm \to \Rm$ is continuously
differentiable, and
$$%
  \frac{d}{d\theta} m_{\theta,1} = m_{\theta,2}.
$$
Moreover, for every $\theta \in \Rm/\alpha\Zm$, the one-sided
derivatives $D_\theta^\pm \frac{d m_{\theta,1}}{d \theta} =
D_\theta^\pm m_{\theta,2}$ exist and satisfy
\begin{equation}\label{Dpm-def}
  D^\pm_\theta \frac{d m_{\theta,1}}{d\theta}
  =
  D^\pm_\theta m_{\theta,2}
  =
  -m_{\theta,1} - \int_{\Ical_\theta^\mp} d(\0,p) \, d\mu(p)
  =
  -m_{\theta,1} - w^\mp(\theta)
\end{equation}
where $w^\pm(\theta)$ and $\Ical_\theta^\pm$ are as in~\eqref{wpmdef},
cf.~Figure~\ref{f:folding-map}.  In particular since $\Ical_\theta
\subset \Ical_\theta^\pm$,
\begin{equation}\label{Dpm}
  D^\pm_\theta  \frac{d m_{\theta,1}}{d\theta} \leq - m_{\theta,1} -
  \int_{\Ical_\theta} d(\0,p) \, d\mu(p)    
\end{equation}
holds for all $\theta \in \Rm/\alpha\Zm$.
\end{lem}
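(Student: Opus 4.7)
The plan is to differentiate the integrals defining $m_{\theta,i}$ under the integral sign and justify the interchange by dominated convergence, the $\mu$-integrable dominator in each case being $r' = d(\0,p)$, which lies in $L^1(\mu)$ by the standing integrability hypothesis~(\ref{integ}). The key elementary observation is that for each fixed $p = (r',\theta') \in \KK$ the map $\theta \mapsto F_\theta(p)$ is $r'$-Lipschitz on $\RR/\alpha\ZZ$.

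For the $C^1$ claim, I fix $p$ with $r' > 0$ and note that $\theta \mapsto e_1 \cdot F_\theta(p)$ equals $r'\cos(\theta'-\theta)$ on the visible region $|\theta'-\theta| \leq \pi$ and the constant $-r'$ on the invisible region $|\theta'-\theta| \geq \pi$. On the visible region its $\theta$-derivative is $r'\sin(\theta'-\theta) = e_2 \cdot F_\theta(p)$; on the invisible region this derivative vanishes, as does $e_2 \cdot F_\theta(p)$; at the boundary $|\theta'-\theta| = \pi$ both one-sided derivatives are zero because $\sin(\pm\pi)=0$. Hence the pointwise derivative of $e_1\cdot F_\theta(p)$ exists everywhere, equals $e_2 \cdot F_\theta(p)$, and is bounded in absolute value by $r'$. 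Dominated convergence then gives $\frac{d}{d\theta} m_{\theta,1} = m_{\theta,2}$, and continuity of $\theta \mapsto m_{\theta,2}$ follows from continuity of $\theta \mapsto F_\theta(p)$ by a further application of dominated convergence.

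For the one-sided derivatives of $m_{\theta,2}$, the integrand $\theta \mapsto e_2 \cdot F_\theta(p)$ is $r'$-Lipschitz but fails to be differentiable at $\theta'-\theta = \pm\pi$. A case analysis of the forward perturbation $\theta \mapsto \theta+\epsilon$ shows that the pointwise right-derivative equals $-r'\cos(\theta'-\theta)$ on the set $\{r' > 0,\, -\pi < \theta'-\theta \leq \pi\}$ and vanishes elsewhere; the crux is that the boundary ray $\theta' = \theta+\pi$ is \emph{included}, since a forward perturbation of $\theta$ keeps $p$ visible, while the opposite ray $\theta' = \theta-\pi$ is \emph{excluded}, since a forward perturbation sends $p$ into the invisible region. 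The difference quotient is uniformly bounded by $r'$, so dominated convergence gives
$$
  D_\theta^+ m_{\theta,2} = -\int_{\{r' > 0,\, -\pi < \theta'-\theta \leq \pi\}} r'\cos(\theta'-\theta)\, d\mu(p).
$$
Matching this with $-m_{\theta,1} - w^-(\theta)$ is a bookkeeping step on $\mu$-null sets: splitting $-m_{\theta,1}$ over the visible and invisible regions and using the decomposition $\Ical_\theta^- = \{r' > 0,\, \theta'-\theta = -\pi\} \cup \Ical_\theta$, the contributions over $|\theta'-\theta| > \pi$ cancel and the contribution at $\theta'-\theta = -\pi$ is reconciled via $\cos(-\pi) = -1$. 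The computation for $D_\theta^-$ is symmetric, interchanging the roles of the two boundary rays and yielding $w^+(\theta)$. The inequality~(\ref{Dpm}) is immediate from $\Ical_\theta \subset \Ical_\theta^\pm$.

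The hard part will be precisely this boundary bookkeeping at the rays $\theta' = \theta \pm \pi$: correctly deciding which ray belongs to a given one-sided derivative, and then reconciling the resulting integral with the prescribed $w^\mp(\theta)$ by shifting mass between the boundary ray and the adjacent invisible region. All the rest is routine calculus plus one standard dominated-convergence interchange.
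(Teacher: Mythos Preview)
Your argument is correct and follows essentially the same route as the paper: write $m_{\theta,i}$ as integrals of $r_p\cos(\min\{|\theta_p-\theta|,\pi\})$ and $r_p\sin(\theta_p-\theta)\mathbf{1}_{|\theta_p-\theta|\le\pi}$, differentiate pointwise in $\theta$, pass the derivative under the integral by dominated convergence with dominator $r_p\in L^1(\mu)$, and then sort out which boundary ray $\theta'=\theta\pm\pi$ belongs to which one-sided derivative. One phrasing to tighten: the reconciliation with $-m_{\theta,1}-w^\mp(\theta)$ is not ``a bookkeeping step on $\mu$-null sets''---the boundary rays may well carry positive $\mu$-mass---but you handle this correctly in the next sentence via $\cos(\pm\pi)=-1$, which is exactly the paper's computation $-\int_{\KK\setminus\Ical_\theta^\mp} r_p\cos(\theta_p-\theta)\,d\mu = -m_{\theta,1}-\int_{\Ical_\theta^\mp} r_p\,d\mu$.
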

\begin{proof}
For $\theta' \in \Rm/\alpha\Zm$, define functions
$f_{\theta'}:\Rm/\alpha\Zm \to [-1,1]$ and $g_{\theta'}:\Rm/\alpha\Zm
\to [-1,1]$ by
$$%
  f_{\theta'}(\theta) = \cos\big(\!\min\{|\theta - \theta'|,\pi\}\big)
\quad\text{and}\quad
  g_{\theta'}(\theta)
  =
  \begin{cases}
  \\[-4.25ex]
  \sin(\theta'-\theta) & \text{if } |\theta-\theta'| \leq \pi
  \\
     \hfill 0 \hfill   & \text{otherwise}.
  \\[-.9ex]
  \end{cases}
$$
Then
\begin{equation}\label{m-repr:eq}
  m_{\theta,1} = \int_\KK r_p \, f_{\theta_p}(\theta) \,d\mu(p)
\quad\text{and}\quad
  m_{\theta,2} = \int_\KK r_p\, g_{\theta_p}(\theta) \,d \mu(p),
\end{equation}
where $p = (r_p,\theta_p)$.  Each function $f_{\theta_p}$ is
continuously differentiable.  The integrability
condition~(\ref{integ}) and the dominated convergence theorem imply
that
$$%
  \frac{d m_{\theta,1}}{d\theta} =  \int_\KK r_p \,
  f'_{\theta_p}(\theta) d\mu(p) = \int_{\KK \setminus
  \Ical_\theta} r_p \, \sin(\theta_p - \theta) d\mu(p) =
  m_{\theta,2}.
$$
Each $g_{\theta_p}$ has one-sided derivatives:
\begin{align*}
  D^+_\theta g_{\theta_p}(\theta) =&
  \begin{cases}
  \\[-4.25ex]
  -\cos(\theta_p - \theta)
    &\quad\text{if } \theta_p - \pi \leq \theta < \theta_p + \pi,
     \text{ i.e.\ } -\pi < \theta_p - \theta \leq \pi
  \\
  \hfill 0 \hfill
    &\quad\text{otherwise},
  \\[-.9ex]
  \end{cases}
\\[1ex]
 D^-_\theta g_{\theta_p}(\theta) =&
 \begin{cases}
 \\[-4.25ex]
  -\cos(\theta_p - \theta)
    &\quad\text{if } \theta_p - \pi < \theta \leq \theta_p + \pi
     \text{ i.e.\ } -\pi \leq \theta_p - \theta <\pi
  \\
  \hfill 0 \hfill
    &\quad\text{otherwise}.
  \\[-.9ex]
 \end{cases}
\end{align*}
Therefore, by the dominated convergence theorem, $m_{\theta,2}$ also
has one-sided derivatives at every $\theta \in \Rm/\alpha\Zm$:
\begin{align*}
D^+_\theta m_{\theta,2}
  =  \int_\KK r_p \,D^+_\theta g_{\theta_p}(\theta) \,d\mu(p)
&  = - \int_{\KK \setminus \Ical_\theta^-} r_p \cos(\theta_p -
   \theta) \,d\mu(p)\\
 & = - m_{\theta,1} - \int_{\Ical_\theta^-} r_p \, d\mu(p).
\end{align*}
Similarly,
\begin{align*}
D^-_\theta m_{\theta,2}
  =  \int_\KK r_p \,D^-_\theta g_{\theta_p}(\theta) \,d\mu(p)
&  = - \int_{\KK \setminus \Ical_\theta^+} r_p \cos(\theta_p -
   \theta) \,d\mu(p)
\\
&  = - m_{\theta,1} - \int_{\Ical_\theta^+} r_p \, d\mu(p).
\end{align*}
In particular, (\ref{Dpm}) holds for all $\theta \in \Rm/\alpha\Zm$.
\end{proof}

\begin{cor}\label{cor:noshadowmass}
Let $A \neq B$ and $|A - B| \leq \pi$.  If $m_{\theta,1} = 0$ for all
$\theta \in [A,B]$ then $\mu(\Ical_\theta) = 0$ for all $\theta \in
[A,B]$.
\end{cor}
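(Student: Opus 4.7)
The plan is to exploit the one-sided second-derivative inequality (\ref{Dpm}) from Lemma~\ref{lem:Dmtheta} applied at points of $[A,B]$ where $m_{\theta,1}$ vanishes. The underlying intuition is that Lemma~\ref{lem:Dmtheta} shows $m_{\theta,1}$ is concave-like in a quantitative way controlled by the mass hiding behind the origin from direction $\theta$, so if $m_{\theta,1}$ is identically zero on an interval, there simply cannot be any mass in the corresponding shadows.

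First, since $m_{\theta,1}\equiv 0$ on $[A,B]$ by hypothesis and $m_{\theta,1}$ is continuously differentiable with $\frac{d}{d\theta}m_{\theta,1}=m_{\theta,2}$ (Lemma~\ref{lem:Dmtheta}), I conclude $m_{\theta,2}\equiv 0$ on $[A,B]$ as well. Next, for any $\theta \in [A,B)$, the right-sided derivative $D^+_\theta m_{\theta,2}$ is computed as a limit along points $\theta'\in(\theta,\theta+\pi)$, and since $m_{\theta',2}=0=m_{\theta,2}$ for $\theta'$ in $[A,B]$ sufficiently close to $\theta$, this right-sided derivative equals $0$. Applying the inequality~(\ref{Dpm}) with the plus sign,
\[
0 \;=\; D^+_\theta m_{\theta,2} \;\leq\; -m_{\theta,1} - \int_{\Ical_\theta} d(\0,p)\,d\mu(p) \;=\; -\int_{\Ical_\theta} d(\0,p)\,d\mu(p),
\]
forces $\int_{\Ical_\theta} d(\0,p)\,d\mu(p)=0$. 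Since $d(\0,p)>0$ everywhere on $\Ical_\theta$ (which contains no point with vanishing radius), this gives $\mu(\Ical_\theta)=0$ for every $\theta \in [A,B)$.

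To finish, I handle the remaining endpoint $\theta=B$ symmetrically: the left-sided derivative $D^-_\theta m_{\theta,2}$ at $\theta=B$ equals $0$ for the same reason (one takes $\theta'\in(\theta-\pi,\theta)$, and such $\theta'$ lie in $[A,B]$), so the minus-sign version of~(\ref{Dpm}) yields $\mu(\Ical_B)=0$ by the same dichotomy of the integrand.

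I do not expect any real obstacle here: the entire argument is a one-line consequence of~(\ref{Dpm}) once we know $m_{\theta,2}\equiv 0$ on $[A,B]$. The only mild subtlety, which is easy to dispatch, is bookkeeping at the endpoints—choosing the correct one-sided derivative so that the limit is still taken within $[A,B]$ where $m_{\theta,2}$ vanishes. The hypothesis $|A-B|\leq \pi$ ensures these one-sided limit windows fit inside the angular range allowed by Definition~\ref{d:interval}, so no issues with the angular arithmetic arise.
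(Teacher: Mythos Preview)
Your proof is correct and follows essentially the same approach as the paper: apply the one-sided second-derivative inequality~(\ref{Dpm}) at points of $[A,B]$, using that $m_{\theta,2}\equiv 0$ there, and handle the endpoints by choosing the appropriate one-sided derivative. The paper's version is slightly terser and notes explicitly that the roles of $D^+$ and $D^-$ at the endpoints swap depending on the orientation of $[A,B]$, but this is a cosmetic difference.
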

\begin{proof}
For $\theta \in (A,B)$ this an immediate consequence of~(\ref{Dpm}),
since
$$%
  0 = D^\pm_\theta  \frac{d m_{\theta,1}}{d\theta} \leq -
  \int_{\Ical_\theta} d(\0,p) \, d\mu(p) \leq 0.
$$
When $0 < B - A \leq \pi$, the $D^+_\theta$ and $D^-_\theta$ versions
of this calculation remain valid for the endpoints $\theta = A$ and
$\theta = B$, respectively, and swapped when $0 < A - B \leq \pi$.
\end{proof}

\begin{ex}\label{e:noshadowmass}
The assertion of Corollary~\ref{cor:noshadowmass} is wrong when $A =
B$, i.e.~when $\theta^* \in \Rm/\alpha\Zm$ with $m_{\theta^*,1} = 0$
is isolated.  To see this, consider $\mu$ having point masses of
weight $1/3$ at $(1,\theta^*)$ as well as at $(1/2, \theta^* + \pi +
\epsilon)$ and $(1/2, \theta^* - \pi - \epsilon)$ with $0 < \epsilon <
\alpha/2 - \pi$.  Then $\mu(\Ical_{\theta^*}) = 2/3$ while
$m_{\theta,1} < 0$ for all $\theta \neq \theta^*$ and $m_{\theta^*,1}
= 0$.
\end{ex}

\begin{ex}\label{e:pos-shadowmass}
The shadow of an angle $\theta$ with $m_{\theta,1} > 0$ may carry
mass.  Changing the first point in Example~\ref{e:noshadowmass} into
$(2,\theta^*)$ yields $m_{\theta^*,1} =1/3 >0$ and
$\mu(\Ical_{\theta^*}) = 2/3$.
\end{ex}

Recalling the definition $w^\pm(\theta)$ from~\eqref{wpmdef}, observe
that $0 \leq w^\pm(\theta) \leq \bar r$ holds for all~$\theta$ because
the integrand is nonnegative and $\Ical^\pm_\theta \subset \KK$.
Also, as a consequence of~(\ref{m-repr:eq}),
\begin{equation}\label{m-rbar:ineq}
  \|m_\theta\|
  =
  \sqrt{m_{\theta,1}^2+m_{\theta,2}^2}
  \leq
  \sqrt{2}\bar r.
\end{equation}

Since $\mu$ is a probability measure, due to $\sigma$-additivity, only
countably many of the rays $\{(r,\theta) \mid 0 \leq r < \infty\}$ for
$\theta \in \RR/\alpha\ZZ$ carry positive mass of $\mu$.  Consequently,
$w^+$ and $w^-$ are continuous almost everywhere with
respect to the understood measure on $\RR/\alpha\ZZ$ induced by
Lebesgue measure on $[0,\alpha)$, and so is $\theta \mapsto D^\pm
m_{\theta,2}$. Furthermore, $w^+(\theta) = w^-(\theta)$ for almost every angle $\theta$.  

\begin{cor}\label{cor:odeintegrate}
Let $\hat\theta \in \Rm/\alpha\Zm$ and $\theta \in [\hat\theta - \pi,
\hat\theta + \pi]$.  Then
\begin{equation}\label{eq:m_theta1}
  m_{\theta,1} = m_{\hat \theta,1} \cos(\theta - \hat \theta) +
  m_{\hat \theta,2} \sin(\theta - \hat \theta) - \int_{\hat
  \theta}^\theta w(\psi) \sin(\theta - \psi) \,d\psi
\end{equation}
and
\begin{equation}\label{eq:m_theta2}
  m_{\theta,2} = - m_{\hat \theta,1} \sin(\theta - \hat \theta) +
  m_{\hat \theta,2} \cos(\theta - \hat \theta)- \int_{\hat
  \theta}^\theta w(\psi) \cos(\theta - \psi) \,d\psi.
\end{equation}
where 
\begin{equation}
w(\psi) = \int_{\Ical_\psi} d(\0, p) \,d\mu(p). \label{wpmdef2}
\end{equation}
\end{cor}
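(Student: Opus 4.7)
The plan is to recognize that the pair $(m_{\theta,1}, m_{\theta,2})$ satisfies an inhomogeneous linear first-order ODE system in $\theta$ and then apply variation of parameters based at $\hat\theta$.

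First, Lemma \ref{lem:Dmtheta} gives $\frac{d}{d\theta} m_{\theta,1} = m_{\theta,2}$ classically, together with one-sided derivatives $D^\pm_\theta m_{\theta,2} = -m_{\theta,1} - w^\mp(\theta)$ at every $\theta$. Because $\mu$ is a probability measure, only countably many rays carry positive $\mu$-mass, so $w^+(\psi) = w^-(\psi) = w(\psi)$ for Lebesgue-almost every $\psi$. Combined with the uniform bound $|D^\pm_\theta m_{\theta,2}| \leq \|m_\theta\| + \bar r \leq (\sqrt 2 + 1)\bar r$ from \eqref{m-rbar:ineq}, this makes $m_{\theta,2}$ Lipschitz, hence absolutely continuous, with classical derivative $-m_{\theta,1} - w(\theta)$ a.e.

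Next, writing $x(\theta) = (m_{\theta,1}, m_{\theta,2})^T$, the system reads $x'(\theta) = A x(\theta) + b(\theta)$ in the a.e.\ sense, with
\begin{equation*}
A = \begin{pmatrix} 0 & 1 \\ -1 & 0 \end{pmatrix}, \qquad b(\theta) = \begin{pmatrix} 0 \\ -w(\theta) \end{pmatrix}, \qquad e^{A s} = \begin{pmatrix} \cos s & \sin s \\ -\sin s & \cos s \end{pmatrix}.
\end{equation*}
Since Definition \ref{d:interval} makes $\psi - \hat\theta$ and $\theta - \psi$ well-defined real numbers for $\theta, \psi$ in the closed interval $[\hat\theta - \pi, \hat\theta + \pi]$, variation of parameters applies there and yields
\begin{equation*}
x(\theta) = e^{A(\theta - \hat\theta)} x(\hat\theta) + \int_{\hat\theta}^\theta e^{A(\theta - \psi)} b(\psi)\, d\psi.
\end{equation*}
Reading off the two scalar components of this vector identity produces \eqref{eq:m_theta1} and \eqref{eq:m_theta2} verbatim. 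Equivalently, one may define $u(\theta)$ to be the right-hand side of \eqref{eq:m_theta1}, check directly that $u(\hat\theta) = m_{\hat\theta,1}$, $u'(\hat\theta) = m_{\hat\theta,2}$, and $u''+u = -w$ a.e., then appeal to uniqueness for the linear second-order ODE $y''+y = -w$ to conclude $u \equiv m_{\cdot,1}$; differentiating this identity then gives \eqref{eq:m_theta2}.

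The main obstacle is the regularity of $m_{\theta,2}$: its one-sided derivatives can genuinely disagree at any of the (at most countably many) rays carrying positive $\mu$-mass, so $m_{\theta,2}$ need not be $C^1$, which prevents a direct application of the classical fundamental theorem of calculus. The saving grace is that the uniform bound on $D^\pm_\theta m_{\theta,2}$ provided by Lemma \ref{lem:Dmtheta} forces Lipschitz continuity, which is enough to integrate the a.e.\ identity $m'_{\theta,2} = -m_{\theta,1} - w(\theta)$ and thus run the ODE argument above.
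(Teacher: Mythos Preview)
Your argument is correct and is essentially the same as the paper's: both derive \eqref{eq:m_theta1}--\eqref{eq:m_theta2} from the differential relations $m_{\theta,1}' = m_{\theta,2}$ and $m_{\theta,2}' = -m_{\theta,1} - w(\theta)$ of Lemma~\ref{lem:Dmtheta}, the paper by direct integration by parts and you by the equivalent variation-of-parameters formula for the $2\times 2$ system. Your explicit justification that $m_{\theta,2}$ is Lipschitz (hence absolutely continuous) from the uniform bound on its one-sided derivatives is a point the paper leaves implicit when it invokes integration by parts against $D^\pm_\psi m_{\psi,2}$.
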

\begin{proof}
Since $w^+$ and $w^-$ are equal for almost every angle, we have 
\[
\int_{\hat
  \theta}^\theta w^\pm(\psi) \sin(\theta - \psi) \,d\psi = \int_{\hat
  \theta}^\theta w(\psi) \sin(\theta - \psi) \,d\psi
\]
and similarly for the integral in (\ref{eq:m_theta2}). Equation (\ref{eq:m_theta1}) then follows from~(\ref{Dpm-def}) using integration by
parts along $\psi\in (\hat \theta,\theta)$:
$$%
  m_{\psi,1}\sin(\theta-\psi)
  =
  -D^{\pm}_\psi  m_{\psi,2}\sin(\theta-\psi) - w^\mp(\psi) \sin(\theta-\psi).
  $$
Equation (\ref{eq:m_theta2}) follows from
$$%
\hspace{11.6ex}
  D^{\pm}_\psi  m_{\psi,2}\cos(\theta-\psi)
  =
  - m_{\psi,1}\cos(\theta-\psi) - w^\mp(\psi) \cos(\theta-\psi).
$$
\end{proof}

\begin{figure}[t!]
\centering

\begin{minipage}{0.25\linewidth}
\begin{excise}{
\begin{tikzpicture}[scale=3]
  \coordinate (O) at (0,0);
  \coordinate (theta) at (0:1);
  \coordinate (thetaP) at (-30:1);
  \coordinate (m) at (45:.75);

  \draw[solid] (theta) node[right]{$\theta$}--(O)--(thetaP)  node[right]{$\hat\theta$};
  \draw[dashed] (O)--(m);
  \fill (m) circle[radius=.5pt] node[right]{$m_\theta$};

  \draw pic[draw=orange,arrows={-latex[orange, fill=orange]},solid,thick,angle
        eccentricity=1.05, angle radius=1.25cm,right,"$\sigma=\theta-\hat\theta$"]
	{angle=thetaP--O--theta};

  \draw[opacity=0,white] (0,-.75) rectangle (1,1);
\end{tikzpicture}
}\end{excise}
\includegraphics[width=1\textwidth]{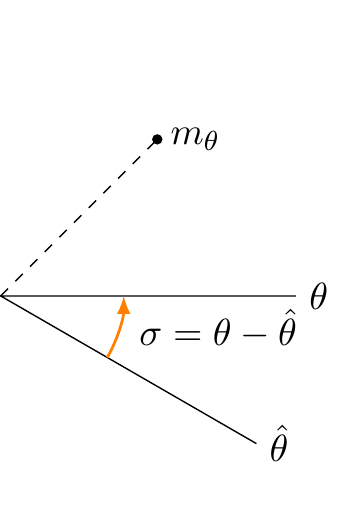}
\end{minipage}
\begin{minipage}{0.225\linewidth}
$$%
  \xrightarrow{\quad\Phi_\sigma \quad}
$$
\end{minipage}
\begin{minipage}{0.25\linewidth}
\begin{excise}{
\begin{tikzpicture}[scale=3]
  \coordinate (O) at (0,0);
  \coordinate (theta) at (30:1);
  \coordinate (thetaP) at (0:1);
  \coordinate (m) at (45:.75);
  \coordinate (mNew) at (75:.75);
  \coordinate (thetaPOld) at (-30:1);

  \draw[solid] (theta) node[right]{$\theta$}--(O)--(thetaP) node[right]{$\hat\theta$};

  \draw[white] (O)--(thetaPOld);

  \draw[dashed] (O)--(m);
  \fill (m) circle[radius=.5pt] node[right]{$m_\theta$};

  \draw[dashed] (O)--(mNew);
  \fill (mNew) circle[radius=.5pt] node[above]{$\Phi_\sigma m_\theta$};

  \draw pic[draw=orange,arrows={-latex[orange, fill=orange]},solid,thick,angle
        eccentricity=1.05, angle radius=1.25cm,right,"$\sigma$"]
        {angle=thetaP--O--theta};

  \draw pic[draw=orange,arrows={-latex[orange, fill=orange]},solid,thick,angle
        eccentricity=1.05, angle radius=1.55cm,above,"$\sigma$"]
        {angle=m--O--mNew};
  \draw[opacity=0,white] (0,-.75) rectangle (1,1);
\end{tikzpicture}
}\end{excise}
\includegraphics[width=1\textwidth]{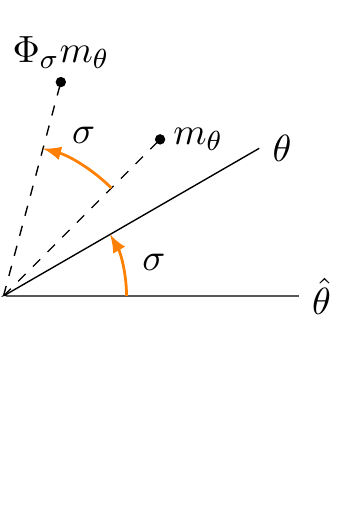}
\end{minipage}
\caption{\label{f:rot}%
The rotation.  If all the shadows from $\theta$ to $\hat\theta$ carry
no mass then $\Phi_{\hat\theta-\theta}m_\theta = m_{\hat\theta}$ by
Lemma~\ref{lem:mthetamodulus}.}
\end{figure}

For an angle $\sigma \in \Rm$ define the rotation $\Phi_\sigma:\Rm^2
\to \Rm^2$ by $\Phi_\sigma (r,\psi) = (r,\psi + \sigma)$ in polar
coordinates, cf.~Figure~\ref{f:rot}.  As usual, denote by $\|\cdot\|$
the standard Euclidean norm on~$\RR^2$.  Recall
Definition~\ref{d:IcalA}, specifically $\Ical_A$ for an interval~$A$.

\begin{lem}\label{lem:mthetamodulus}
Let $\hat\theta \in \Rm/\alpha\Zm$.  For all $\theta \in
\Rm/\alpha\Zm$ with $|\theta - \hat\theta| \leq \pi$,
$$%
  \|\Phi_{\hat\theta - \theta} m_{\hat\theta} - m_\theta\|
  \leq
  |\theta - \hat\theta| \int_{\Ical_{(\hat\theta,\theta)}} d(\0,p)\,d\mu(p).
$$
In particular, $\|\Phi_{\hat\theta - \theta} m_{\hat\theta} -
m_\theta\| \leq |\theta - \hat\theta| \bar r$.  Also, if
$\mu(\Ical_{(\hat\theta,\theta)}) = 0$, then $\Phi_{\hat\theta -
\theta} m_{\hat\theta} = m_\theta$.
\end{lem}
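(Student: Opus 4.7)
The plan is to derive the bound directly from the explicit integral representations of $m_{\theta,1}$ and $m_{\theta,2}$ furnished by Corollary~\ref{cor:odeintegrate}, and then to apply Fubini's theorem in order to rewrite the resulting $\psi$-integral as an integral against~$\mu$ over the shadow~$\Ical_{(\hat\theta,\theta)}$.

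First I would write out $\Phi_{\hat\theta-\theta}m_{\hat\theta}$ in Cartesian components using the rotation matrix. Its two components are exactly the two leading trigonometric sums on the right-hand sides of the formulas for $m_{\theta,1}$ and $m_{\theta,2}$ in Corollary~\ref{cor:odeintegrate}. Subtracting, the two trigonometric parts cancel, leaving
\begin{equation*}
  m_\theta - \Phi_{\hat\theta-\theta}m_{\hat\theta}
  =
  -\int_{\hat\theta}^{\theta}\!w(\psi)\bigl(\sin(\theta-\psi),\,\cos(\theta-\psi)\bigr)\,d\psi,
\end{equation*}
where $w(\psi)=\int_{\Ical_\psi}d(\0,p)\,d\mu(p)$ from~(\ref{wpmdef2}). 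Since the vector $\bigl(\sin(\theta-\psi),\cos(\theta-\psi)\bigr)$ has Euclidean norm~$1$, the Euclidean triangle inequality for vector-valued integrals yields
\begin{equation*}
  \bigl\|\Phi_{\hat\theta-\theta}m_{\hat\theta}-m_\theta\bigr\|
  \leq
  \left|\int_{\hat\theta}^{\theta}w(\psi)\,d\psi\right|.
\end{equation*}

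Next I would apply Fubini, using nonnegativity of $d(\0,p)$, to rewrite this one-dimensional $\psi$-integral as
\begin{equation*}
  \int_{\hat\theta}^{\theta}\!w(\psi)\,d\psi
  =
  \int_{\KK}d(\0,p)\,\lambda\bigl(\{\psi\in[\hat\theta,\theta]:p\in\Ical_\psi\}\bigr)\,d\mu(p),
\end{equation*}
where $\lambda$ denotes Lebesgue measure (assume $\hat\theta<\theta$; the other case is symmetric). The Lebesgue measure inside the integrand is bounded by $|\theta-\hat\theta|$, and whenever it is positive the point $p$ lies in $\Ical_\psi$ for some $\psi\in(\hat\theta,\theta)$, hence in $\Ical_{(\hat\theta,\theta)}$. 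This delivers the stated bound
\begin{equation*}
  \bigl\|\Phi_{\hat\theta-\theta}m_{\hat\theta}-m_\theta\bigr\|
  \leq
  |\theta-\hat\theta|\int_{\Ical_{(\hat\theta,\theta)}}d(\0,p)\,d\mu(p).
\end{equation*}

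The two particular consequences then drop out immediately: enlarging the region from $\Ical_{(\hat\theta,\theta)}$ to all of $\KK$ gives the uniform bound by $|\theta-\hat\theta|\bar r$ via~(\ref{integ}); and when $\mu(\Ical_{(\hat\theta,\theta)})=0$, the right-hand side vanishes and we conclude $\Phi_{\hat\theta-\theta}m_{\hat\theta}=m_\theta$. The main technical obstacle is a bookkeeping one rather than a conceptual one: one must be careful that the $w^+/w^-$ distinction in Corollary~\ref{cor:odeintegrate} is harmless (they agree a.e., as already noted in the text preceding that corollary), and that the endpoints $\psi=\hat\theta,\theta$ can be ignored since they contribute Lebesgue measure zero, so it makes no difference whether one uses the open interval $(\hat\theta,\theta)$ or its closure in defining the shadow region.
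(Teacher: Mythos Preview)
Your proof is correct and shares the same starting point as the paper's proof: both invoke Corollary~\ref{cor:odeintegrate} to identify $\Phi_{\hat\theta-\theta}m_{\hat\theta}-m_\theta$ with the vector-valued $\psi$-integral of $w(\psi)\bigl(\sin(\theta-\psi),\cos(\theta-\psi)\bigr)$. After that, the two arguments diverge. You apply the triangle inequality for vector-valued integrals directly, obtaining the bound $\int w(\psi)\,d\psi$, and then use Fubini to convert this into an integral over~$\KK$ with the Lebesgue measure of the set of angles $\psi$ with $p\in\Ical_\psi$ as the weight; bounding that weight by $|\theta-\hat\theta|$ and observing that it vanishes off $\Ical_{(\hat\theta,\theta)}$ gives the result. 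The paper instead applies Cauchy--Schwarz separately to the sine and cosine components, combines via $\sin^2+\cos^2=1$ to reach $|\theta-\hat\theta|\int w(\psi)^2\,d\psi$, and then bounds this by $|\theta-\hat\theta|^2\sup_\psi w(\psi)^2$, finishing with the observation that each $\Ical_\psi\subset\Ical_{(\hat\theta,\theta)}$. Your route is slightly sharper at the intermediate stage (your $\int w$ is at most the paper's $\sqrt{|\theta-\hat\theta|\int w^2}$ by Cauchy--Schwarz) and more structural in how the shadow $\Ical_{(\hat\theta,\theta)}$ emerges; the paper's route avoids Fubini altogether and uses only elementary sup bounds. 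Both land on exactly the same final inequality.
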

\begin{proof}
Suppose $\theta \in [\hat\theta-\pi, \hat\theta + \pi]$.  Then
$$%
  \Phi_{\hat\theta - \theta} m_{\hat\theta} =
  \begin{pmatrix}
    \phantom{-}\cos(\theta - \hat\theta) & \sin(\theta - \hat\theta)\\
              -\sin(\theta - \hat\theta) & \cos(\theta - \hat\theta)
  \end{pmatrix}
  \begin{pmatrix}
  	m_{\hat\theta,1}\\
	m_{\hat\theta,2}
  \end{pmatrix}.
$$
So, by Corollary~\ref{cor:odeintegrate},
\begin{align*}
  \|\Phi_{\hat\theta - \theta} m_{\hat\theta} - m_\theta\|^2
& =
  \left|\int_{\hat\theta}^\theta w(\psi)\sin(\theta - \psi)\,d\psi\right|^2
  +\
  \left|\int_{\hat\theta}^\theta w(\psi)\cos(\theta - \psi)\,d\psi\right|^2
\\
& \leq
  |\theta-\hat\theta|\int_{\hat\theta}^\theta w(\psi)^2\sin^2(\theta-\psi)\,d\psi
  +
  |\theta-\hat\theta|\int_{\hat\theta}^\theta w(\psi)^2\cos^2(\theta-\psi)\,d\psi
\\
& =
  |\theta-\hat\theta|\int_{\hat\theta}^\theta w(\psi)^2\,d\psi
  \leq
  |\theta-\hat\theta|^2\sup_{\psi\in(\hat\theta,\theta)}w(\psi)^2.
\end{align*}
The assertion follows now from
$$%
\hspace{15.8ex}
  \sup_{\psi \in (\theta, \hat\theta)} w(\psi)
  =
  \sup_{\psi\in(\theta,\hat\theta)}\int_{I_\psi} d(\0,p)\,d\mu(p)
  \leq
  \int_{\Ical_{(\theta,\hat\theta)}} d(\0,p)\,d\mu(p).
\makebox[15.8ex][r]{\qedhere}
$$
\end{proof}

\begin{lem}\label{lem:mshadow}
If $\theta,\theta'\in \Rm/\alpha\Zm$ with $(1,\theta') \in
\ol{\Ical_\theta}$, then $m_{\theta,1} \leq - m_{\theta',1}$.
If $(1,\theta') \in \Ical_\theta$, then $m_{\theta,1} =
-m_{\theta',1}$ if and only if $\mu(R_{\theta,\theta'}) = 1$.
\end{lem}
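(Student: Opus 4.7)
The plan is to reduce this lemma to a direct integration of the pointwise inequality already established in Lemma~\ref{lem:foldththp}. Recall that by Definition~\ref{d:moments},
\[
  m_{\theta,1} = \int_\KK e_1 \cdot F_\theta(p)\,d\mu(p)
  \quad\text{and}\quad
  m_{\theta',1} = \int_\KK e_1 \cdot F_{\theta'}(p)\,d\mu(p),
\]
so estimates on $m_{\theta,1}$ versus $-m_{\theta',1}$ come from estimates on $e_1 \cdot F_\theta(p)$ versus $-e_1 \cdot F_{\theta'}(p)$ at each point $p$.

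First I would observe that the hypothesis $(1,\theta') \in \ol{\Ical_\theta}$ translates (via Remark~\ref{r:symmetric}) to $|\theta - \theta'| \geq \pi$, which is exactly the condition under which Lemma~\ref{lem:foldththp} applies. Applying that lemma pointwise for every $p \in \KK$ gives $e_1 \cdot F_\theta(p) \leq -e_1 \cdot F_{\theta'}(p)$, and the integrability condition~\eqref{integ} (combined with $\|F_\theta(p)\| \leq d(\0,p)$) makes both sides $\mu$-integrable. Integrating against $d\mu$ yields $m_{\theta,1} \leq -m_{\theta',1}$, establishing the first assertion.

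For the equality claim, I would use the strengthened form of Lemma~\ref{lem:foldththp}: when $(1,\theta') \in \Ical_\theta$, i.e.\ $|\theta - \theta'| > \pi$, the pointwise inequality $e_1 \cdot F_\theta(p) \leq -e_1 \cdot F_{\theta'}(p)$ is strict except exactly on $R_{\theta,\theta'}$. Hence writing
\[
  -m_{\theta',1} - m_{\theta,1}
  = \int_{R_{\theta,\theta'}} \!\!\bigl(-e_1 \cdot F_{\theta'}(p) - e_1 \cdot F_\theta(p)\bigr)\,d\mu(p)
  + \int_{\KK \setminus R_{\theta,\theta'}} \!\!\bigl(-e_1 \cdot F_{\theta'}(p) - e_1 \cdot F_\theta(p)\bigr)\,d\mu(p),
\]
the first integral vanishes (equality on $R_{\theta,\theta'}$) while the integrand of the second is strictly positive. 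Therefore equality $m_{\theta,1} = -m_{\theta',1}$ forces the second integral to be zero, which happens exactly when $\mu(\KK \setminus R_{\theta,\theta'}) = 0$, i.e.\ $\mu(R_{\theta,\theta'}) = 1$. Conversely, if $\mu(R_{\theta,\theta'}) = 1$ then both integrals over the complement vanish and equality is immediate.

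The only mild subtlety is verifying that the pointwise equality set in Lemma~\ref{lem:foldththp} really is $R_{\theta,\theta'}$ (rather than some larger set) under the strict hypothesis $|\theta - \theta'| > \pi$; this is exactly what the ``equality precisely when $p \in R_{\theta,\theta'}$'' clause of that lemma provides, so no separate work is required. No other obstacle arises: the argument is essentially a one-line integration plus a standard ``strict inequality integrated gives strict inequality unless the integrand vanishes'' dichotomy.
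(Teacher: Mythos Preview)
Your proof is correct and follows essentially the same approach as the paper: both arguments integrate the pointwise inequality of Lemma~\ref{lem:foldththp} and use the decomposition of $\KK$ into $R_{\theta,\theta'}$ and its complement to handle the equality case. The only cosmetic difference is that the paper introduces the split into $R_{\theta,\theta'}$ and $\KK\setminus R_{\theta,\theta'}$ from the outset, whereas you first integrate globally and then split for the equality analysis.
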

\begin{proof}
This follows from Lemma~\ref{lem:foldththp} and the definition of
$m_\theta$ and~$m_{\theta'}$:
\begin{align*}
m_{\theta,1}
& = \int_{R_{\theta,\theta'}}\! e_1 \cdot F_\theta (p) \, d\mu(p) +
  \int_{\KK \setminus R_{\theta,\theta'}}\! e_1 \cdot F_\theta( p) \,
  d\mu(p)
\\
& = - \int_{R_{\theta,\theta'}}\! e_1 \cdot F_{\theta'} (p) \, d\mu(p) +
  \int_{\KK \setminus R_{\theta,\theta'}}\! e_1 \cdot F_\theta (p) \,
  d\mu(p) \text{ by the equality case in Lemma~\ref{lem:foldththp}}
\\
& \leq - \int_{R_{\theta,\theta'}}\! e_1 \cdot F_{\theta'} (p) \,
  d\mu(p) - \int_{\KK \setminus R_{\theta,\theta'}}\! e_1 \cdot
  F_{\theta'} (p) \, d\mu(p) \text{ by Lemma~\ref{lem:foldththp}}
\\
& = -m_{\theta',1}.
\end{align*}
If $(1,\theta') \in \Ical_\theta$, then equality holds only if
$\mu(\KK \setminus R_{\theta,\theta'}) = 0$.
\end{proof}

\begin{cor}\label{cor:mshadow2}
The nondegeneracy condition (Assumption~\ref{d:nondegen}) implies that
$m_{\theta,1} < - m_{\theta',1}$ whenever $(1,\theta') \in
\Ical_\theta$, or in other words, whenever $|\theta - \theta'| > \pi$.
\end{cor}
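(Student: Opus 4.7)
The plan is to derive this as an almost-immediate consequence of Lemma~\ref{lem:mshadow} together with Assumption~\ref{d:nondegen}. First I would note that the hypothesis $(1,\theta') \in \Ical_\theta$ is equivalent to $|\theta - \theta'| > \pi$ by Remark~\ref{r:symmetric}. Since in particular $|\theta - \theta'| \geq \pi$, the nondegeneracy condition~\eqref{nondegen} applies and gives $\mu(R_{\theta,\theta'}) < 1$.

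Next I would invoke Lemma~\ref{lem:mshadow}. Its first assertion gives the weak inequality $m_{\theta,1} \leq -m_{\theta',1}$, since $(1,\theta') \in \Ical_\theta \subset \ol{\Ical_\theta}$. Its second assertion characterizes the equality case $m_{\theta,1} = -m_{\theta',1}$ precisely as $\mu(R_{\theta,\theta'}) = 1$. But nondegeneracy rules this out, so the inequality must be strict: $m_{\theta,1} < -m_{\theta',1}$.

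There is no real obstacle here; the work has been done in Lemma~\ref{lem:mshadow}, and Assumption~\ref{d:nondegen} was tailor-made to exclude the single equality case. The only thing to verify is that the two ways of writing the hypothesis—$(1,\theta') \in \Ical_\theta$ and $|\theta - \theta'| > \pi$—are indeed interchangeable, which is already recorded in Remark~\ref{r:symmetric}.
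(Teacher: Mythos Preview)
Your proposal is correct and matches the paper's intended argument: the corollary is stated immediately after Lemma~\ref{lem:mshadow} without proof, precisely because it follows at once from that lemma together with the nondegeneracy assumption, exactly as you outline.
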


\begin{prop}\label{p:cases}
Assuming integrability (\ref{integ}) and nondegeneracy
(Definition~\ref{d:nondegen}), the subset of\/ $\RR/\alpha\ZZ$ on
which $m_{\theta,1} \geq 0$ is a closed interval that is exactly one
of the following:
\begin{itemize}
\item[\it(i)]%
empty,
\item[\it(ii)]%
of length $< \pi$, with $m_{\theta,1} = 0$ on its entirety,~or
\item[\it(iii)]%
of length $\leq \pi$, with $m_{\theta,1}$ strictly concave (and hence
strictly positive) on its interior.
\end{itemize}
The length of the interval depends on~$\mu$ as well as on~$\alpha$.
\end{prop}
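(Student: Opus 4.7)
The plan is to analyze $f(\theta):=m_{\theta,1}$ via the one-sided second derivative identity from Lemma~\ref{lem:Dmtheta}, which amounts to $f''+f=-w\le 0$ (in the distributional sense, with $w(\psi)$ as in~\eqref{wpmdef2}), combined with the explicit integral representation of Corollary~\ref{cor:odeintegrate}, and then to read off the three cases by a concavity argument. First, $K$ is closed by continuity of $f$. If $\theta_1,\theta_2\in K$ with $|\theta_1-\theta_2|>\pi$, Corollary~\ref{cor:mshadow2} gives $f(\theta_1)<-f(\theta_2)\le 0$, contradicting $f(\theta_1)\ge 0$; so the diameter of $K$ is at most $\pi$. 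Since $\alpha>2\pi$ one may pick $\theta$ at distance $>\pi$ from any fixed $\theta'\in K$, giving $f(\theta)<0$; hence $K\ne\Rm/\alpha\Zm$. If $K=\emptyset$ we are in case~(i); otherwise $K$ lies in a half-circle on which $A:=\inf K$ and $B:=\sup K$ are unambiguous, and continuity forces $f(A)=f(B)=0$ (any isolated point with $f>0$ would by continuity lie in a whole neighborhood contained in~$K$).

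The crux is to show $[A,B]\subseteq K$. Applying Corollary~\ref{cor:odeintegrate} with $\hat\theta=\theta_1$ at a general $\theta\in[\theta_1,\theta_2]$ and at $\theta_2$, then eliminating $m_{\theta_1,2}$ between the two relations using the identity $\sin u\sin(v+w)-\sin(u+v)\sin w=\sin v\sin(u-w)$, yields the Poisson-type formula
\[
  f(\theta)\sin(\theta_2-\theta_1)=f(\theta_1)\sin(\theta_2-\theta)+f(\theta_2)\sin(\theta-\theta_1)+R(\theta),
\]
where $R(\theta)=\sin(\theta_2-\theta)\int_{\theta_1}^{\theta}w(\psi)\sin(\psi-\theta_1)\,d\psi+\sin(\theta-\theta_1)\int_{\theta}^{\theta_2}w(\psi)\sin(\theta_2-\psi)\,d\psi\ge 0$. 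When $\theta_2-\theta_1<\pi$ every sine in sight is nonnegative and the left-hand coefficient is strictly positive, giving $f(\theta)\ge 0$ directly. The main obstacle is the degenerate boundary case $\theta_2-\theta_1=\pi$, where the identity collapses to $0=0$; here Lemma~\ref{lem:mshadow} forces $f(\theta_1)=f(\theta_2)=0$, vanishing of the nonnegative summands then forces $w\equiv 0$ on $(\theta_1,\theta_2)$, Lemma~\ref{lem:mthetamodulus} reduces $f$ on the interval to $m_{\theta_1,2}\sin(\theta-\theta_1)$, and nondegeneracy (ruling out $\mu$ concentrated on the two rays of $R_{\theta_1,\theta_2}$) forces $m_{\theta_1,2}>0$, so $f\ge 0$ throughout. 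This establishes $K=[A,B]$.

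It remains to trichotomize on $K=[A,B]$ with $f(A)=f(B)=0$. From Lemma~\ref{lem:Dmtheta} we have $D^\pm f'\le -f\le 0$ on $[A,B]$, so $f$ is concave there. If $f\equiv 0$ on $[A,B]$ we are in case~(ii); the strict inequality $B-A<\pi$ follows because $B-A=\pi$ with $f\equiv 0$ would, by Corollary~\ref{cor:noshadowmass} together with Lemma~\ref{lem:mthetamodulus} exactly as in the boundary step above, concentrate $\mu$ on the two rays of $R_{A,B}$, contradicting nondegeneracy. Otherwise $f\not\equiv 0$, and concavity together with $f(A)=f(B)=0$ forces $f>0$ on $(A,B)$; any linear segment of $f$ on some $[a,b]\subset(A,B)$ would satisfy $f''=0$, hence $f+w=0$, forcing $f=0$ there, a contradiction. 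So $f$ is strictly concave on $(A,B)$ and we are in case~(iii).
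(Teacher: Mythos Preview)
Your proof is correct and takes a genuinely different route from the paper. The paper proceeds by first splitting on whether $\max_\theta m_{\theta,1}$ is positive or zero; in each case it uses the integral representation of Corollary~\ref{cor:odeintegrate} to show \emph{explicitly} that $m_{\theta,1}<0$ on $(A-\pi,A)$ and on $(B,B+\pi)$ (the formula~(\ref{mintegrate})), which together with Corollary~\ref{cor:mshadow2} for distant angles yields $K=[A,B]$ ``from the outside''. You instead derive a Poisson-type interpolation identity---essentially the Green's function representation for the ODE $f''+f=-w$ on $[\theta_1,\theta_2]$ with Dirichlet data---and use it to show directly that $f\ge 0$ between any two points of~$K$, establishing connectedness of~$K$ ``from the inside''. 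Your approach is more conceptual and handles the interval structure uniformly before splitting into cases~(ii) and~(iii); the paper's is more hands-on (no elimination step needed) and makes the negativity of $f$ outside $[A,B]$ explicit. Two minor remarks: your parenthetical justification for $f(A)=f(B)=0$ is garbled---the reason is simply that $A,B$ are boundary points of the proper closed subset~$K$; and your separate argument that $B-A<\pi$ in case~(ii) is redundant, since your boundary step already shows that $B-A=\pi$ forces $m_{A,2}>0$ and hence $f>0$ on $(A,B)$, landing in case~(iii).
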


\begin{proof}
In any case, Corollary~\ref{cor:mshadow2} implies that $\min
m_{\theta,1} < 0$.  Henceforth assume case (i) does not hold, so the
set $K$ of points where $m_{\theta,1} \geq 0$ is nonempty.  Because
$m_{\theta,1}$ is continuous, the subset $K \subset \RR/\alpha\ZZ$ is
closed and $m_{\theta,1} = 0$ on its boundary.

First suppose that $\max m_{\theta,1} > 0$, the goal being to reach
conclusion~(iii).  Then $K$ contains distinct points $A$ and~$B$ where
$m_{\theta,1} = 0$.  Corollary~\ref{cor:mshadow2} implies that $|B -
A| \leq \pi$.  Lemma~\ref{lem:Dmtheta} implies that $m_{\theta,1}$ is
strictly concave whenever $m_{\theta,1} > 0$.  Hence we can and do
assume that
$$%
  m_{B,2}
  =
  \frac{d~}{d \theta}\Big|_{\theta = B} m_{\theta,1}
  <
  0
  <
  \frac{d~}{d \theta}\Big|_{\theta = A} m_{\theta,1}
  =
  m_{A,2},
$$
using Lemma~\ref{lem:Dmtheta} and the fact that $m_{\theta,1} > 0$
whenever $\theta \in (A,B)$.  Now (\ref{eq:m_theta1}) implies that
\begin{equation}\label{mintegrate}
m_{\theta,1} =
\begin{cases}
\\[-4ex]
  \displaystyle
  m_{A,2}\sin(\theta-A) - \int_A^\theta w(\psi)\sin(\theta-\psi)\,d\psi
& \text{ if } \theta \in (A - \pi,A)
\\[2ex]
  \displaystyle
  m_{B,2}\sin(\theta-B) - \int_B^\theta w(\psi)\sin(\theta-\psi)\,d\psi
& \text{ if } \theta \in (B,B + \pi),
\\[-.5ex]
\end{cases}
\end{equation}
and both of these are strictly negative.  Since also $m_{\theta,1} <
0$ for all $\theta \notin (\theta'-\pi, \theta'+\pi)$ for all $\theta'
\in [A,B]$ by Corollary~\ref{cor:mshadow2}, conclusion (iii) follows
when $\max m_{\theta,1} > 0$.

Finally, assume $\max m_{\theta,1} = 0$.  Fix a left boundary
point~$A$ a right boundary point $B$ of~$K$; note that $A = B$ is
possible.  Corollary~\ref{cor:mshadow2} again teaches that $B - A \leq
\pi$.  By~hypothesis,
$$%
  m_{B,2}
  =
  \frac{d~}{d \theta}\Big|_{\theta = B} m_{\theta,1}
  =
  0
  =
  \frac{d~}{d \theta}\Big|_{\theta = A} m_{\theta,1}
  =
  m_{A,2}.
$$
Hence (\ref{eq:m_theta1}) takes the forms
$$%
  m_{\theta,1} =  \int^A_{\theta} w(\psi) \sin(\theta - \psi) \,d\psi
  \quad\text{and}\quad
  m_{\theta,1} = -\int_B^{\theta} w(\psi) \sin(\theta - \psi) \,d\psi.
$$
These formulas, plus the choices of $A$ and $B$ as left and right
endpoints, imply that $m_{\theta,1} < 0$ for all $\theta \in [A - \pi,
A) \cup (B, B + \pi]$.  In words, every left endpoint of~$K$ is
preceded by, and every right endpoint of~$K$ is followed by, an
interval of length at least~$\pi$ on which $m_{\theta,1} < 0$.  Since
$|A - B| \leq \pi$, the interval $[A,B]$ contains no endpoints of~$K$
other than $A$ and~$B$ themselves.  Therefore $m_{\theta,1} = 0$ for
all $\theta \in [A,B]$.  Corollary~\ref{cor:mshadow2} prevents
$m_{\theta,1} \geq 0$ for $\theta$ outside of $[A - \pi, B + \pi]$.
Except for showing the strict inequality $|B - A| < \pi$, this
completes the proof that $\max m_{\theta,1} = 0$ forces
conclusion~(ii).

Suppose, then, that $|B - A| = \pi$.  Corollary~\ref{cor:noshadowmass}
implies that $\mu(C_{[A,B]}) = 1$.  If $\theta^*$ is the midpoint of
the interval $[A,B]$, then the measure $\tilde\mu_{\theta^*}$ is
supported on the half-space $H^+ = \{(z_1,z_2) \in \RR^2 \mid z_1 \geq
0\}$.  But $m_{1,\theta} = 0$ for all $\theta \in [A,B]$, whence
$\tilde\mu_{\theta^*}$ is actually supported on a single line
$\partial H^+$.  This contradicts the non-degeneracy hypothesis.
Therefore $|B - A| < \pi$, as desired.
\end{proof}

\begin{cor}\label{cor:thetastar}
If $\max_\theta m_{\theta,1} > 0$, then there is a unique angle
$\theta^*$ at which the maximum is attained: $m_{\theta^*,1} =
\max_\theta m_{\theta,1}$.  Furthermore, $m_{\theta^*,2} = 0$ for that
angle.
\end{cor}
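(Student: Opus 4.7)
The plan is to read off the corollary almost directly from Proposition~\ref{p:cases} together with Lemma~\ref{lem:Dmtheta}. Since $\max_\theta m_{\theta,1} > 0$, the set $K = \{\theta \in \RR/\alpha\ZZ \mid m_{\theta,1} \geq 0\}$ is nonempty and conclusion~(ii) of Proposition~\ref{p:cases} is excluded (it would force $m_{\theta,1} \equiv 0$ on~$K$), so we are necessarily in case~(iii): $K = [A,B]$ has length~$\leq \pi$, and $m_{\theta,1}$ is strictly concave on the open interval~$(A,B)$. Outside $K$ we have $m_{\theta,1} < 0 < \max_\theta m_{\theta,1}$, so the maximum must be attained in the interior $(A,B)$.

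First I would invoke strict concavity of $m_{\theta,1}$ on $(A,B)$ to conclude that the maximum on the (closed) interval $[A,B]$ is attained at exactly one point $\theta^*\in(A,B)$; at the endpoints $m_{\theta,1}$ vanishes while the interior values are strictly positive, so no maximizer sits on $\partial K$. Combined with the observation above that $m_{\theta,1}$ is strictly negative off of $K$, this yields the uniqueness of the maximizer $\theta^*$ over the whole circle $\RR/\alpha\ZZ$.

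Next, because $\theta^*$ is an interior point of $K$ and Lemma~\ref{lem:Dmtheta} asserts that $m_{\theta,1}$ is continuously differentiable with $\tfrac{d}{d\theta} m_{\theta,1} = m_{\theta,2}$, the standard first-order condition at an interior maximum of a differentiable function gives
\[
0 = \frac{d}{d\theta}\bigg|_{\theta=\theta^*} m_{\theta,1} = m_{\theta^*,2},
\]
which is the second claim. No step here is a real obstacle: the only thing one has to take care with is to rule out that the maximum is attained on the boundary of $K$, which is immediate from the fact that $m_{\theta,1}$ equals zero on $\partial K$ while $\max_\theta m_{\theta,1}>0$ by hypothesis.
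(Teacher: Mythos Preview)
Your proof is correct and follows exactly the same approach as the paper: uniqueness of~$\theta^*$ from the strict concavity in Proposition~\ref{p:cases}(iii), and $m_{\theta^*,2}=0$ from the derivative identity $\frac{d}{d\theta}m_{\theta,1}=m_{\theta,2}$ of Lemma~\ref{lem:Dmtheta}. The paper's version is simply terser, omitting the explicit observation that the maximizer cannot lie on~$\partial K$.
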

\begin{proof}
The claim concerning $m_{\theta^*,1}$ is immediate from the concavity
in Proposition~\ref{p:cases}.  The fact that $m_{\theta^*,2} = 0$
follows from the first claim of Lemma~\ref{lem:Dmtheta}: $m_{\theta,2}
= \frac{d}{d\theta} m_{\theta,1}$.
\end{proof}

\begin{cor}\label{cor:Gammamax}
Assume square-integrability (\ref{squareint}).  If $\max_\theta
m_{\theta,1} \leq 0$ then $\Gamma(p)$ attains its minimum at the
unique point $\bar b = \0$.  If $\max_\theta m_{\theta,1} > 0$, then
$\Gamma(p)$ attains its minimum at the unique point $\bar b =
(m_{\theta^*,1}, \theta^*)$, where
$$%
  \theta^* = \argmax_\theta m_{\theta,1}.
$$
In either case, the mean of $\mu$ in Definition~\ref{d:meanK}
coincides with the barycenter of~$\mu$.
\end{cor}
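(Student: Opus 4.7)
The plan is to reduce everything to Lemma~\ref{lem:Gammarep}, which gives the explicit representation
\[
\Gamma(r,\theta) \;=\; \tfrac{r^2}{2} - r\,m_{\theta,1} + \Gamma(\0)
\]
for every $(r,\theta) \in \KK$ with $r \geq 0$. The argument then amounts to a two-stage minimization: first minimize over $r \geq 0$ with $\theta$ held fixed, then minimize the resulting function of $\theta$. For each fixed $\theta$, the expression is a strictly convex quadratic in $r$ with derivative $r - m_{\theta,1}$; on $[0,\infty)$ it is uniquely minimized at $r = \max\{m_{\theta,1},0\}$, with minimum value $\Gamma(\0) - \tfrac{1}{2}\bigl(\max\{m_{\theta,1},0\}\bigr)^2$.

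If $\max_\theta m_{\theta,1} \leq 0$, then for every $\theta$ the inner minimum is achieved uniquely at $r=0$, i.e.\ at the origin~$\0$ (which is the same point regardless of~$\theta$), and the common minimum value is $\Gamma(\0)$. Hence $\bar b = \0$ is the unique minimizer of~$\Gamma$. If instead $\max_\theta m_{\theta,1} > 0$, Corollary~\ref{cor:thetastar} supplies a unique angle $\theta^*$ attaining the maximum, so the outer minimization $\theta \mapsto \Gamma(\0) - \tfrac{1}{2}\bigl(\max\{m_{\theta,1},0\}\bigr)^2$ is likewise uniquely attained at $\theta=\theta^*$. For any other $\theta$, either $m_{\theta,1} < m_{\theta^*,1}$ strictly (and the inner minimum is strictly larger), or the inner minimum equals $\Gamma(\0) > \Gamma(\0)-\tfrac{1}{2} m_{\theta^*,1}^2$. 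Combining both stages yields that $\Gamma$ is uniquely minimized at $\bar b = (m_{\theta^*,1},\theta^*)$.

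The final clause is a bookkeeping check against Definition~\ref{d:meanK}. The condition $\max_\theta m_{\theta,1} \leq 0$ is exactly the sticky case (fully or partly), in which Definition~\ref{d:meanK} sets the mean to~$\0$; and $\max_\theta m_{\theta,1} > 0$ together with Proposition~\ref{p:casesSimple} is the nonsticky case, in which Definition~\ref{d:meanK} sets the mean to $(m_{\theta^*,1},\theta^*)$. Both declarations therefore coincide with the barycenter identified above.

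No step looks genuinely hard: the whole argument is a one-dimensional calculus exercise once Lemma~\ref{lem:Gammarep} is in hand, with all the real work (continuity of $\theta\mapsto m_{\theta,1}$, strict concavity on the set where it is positive, and uniqueness of the maximizer~$\theta^*$) already packaged into Lemma~\ref{lem:Dmtheta}, Proposition~\ref{p:cases}, and Corollary~\ref{cor:thetastar}. The only minor subtlety is distinguishing whether the origin or an interior point wins in the boundary case $\max m_{\theta,1}=0$; the strict convexity of $r\mapsto r^2/2 - r m_{\theta,1}$ on $[0,\infty)$ resolves this by forcing the minimum at $r=0$ whenever $m_{\theta,1}\leq 0$.
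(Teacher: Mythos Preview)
Your proof is correct and takes essentially the same approach as the paper, which simply says to use the explicit expression from Lemma~\ref{lem:Gammarep} together with Corollary~\ref{cor:thetastar} and minimize over $r$ and~$\theta$. You have spelled out the two-stage minimization in more detail than the paper does, but the underlying argument is identical.
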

\begin{proof}
Use the explicit expression for $\Gamma(p)$ from
Lemma~\ref{lem:Gammarep} and Corollary~\ref{cor:thetastar}; minimize
over $r$ and~$\theta$.
\end{proof}

\begin{cor}\label{cor:bNetaN}
Assume square-integrability (\ref{squareint}).  If there is
$(r',\theta')\in \KK$ with $r'\geq 0$ and $m_{\theta'} = (r',0)$ then
$\bar b = (r',\theta')$.
\end{cor}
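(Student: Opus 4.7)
The plan is to apply Corollary \ref{cor:Gammamax}, which already identifies the barycenter as either the origin (when $\max_\theta m_{\theta,1} \leq 0$) or the point $(m_{\theta^*,1}, \theta^*)$ where $\theta^* = \argmax_\theta m_{\theta,1}$. So the task reduces to showing that the hypothesis $m_{\theta'} = (r',0)$ forces either $r' = 0$ and $\max m_{\theta,1} \leq 0$, or $r' > 0$ and $\theta' = \theta^*$ with $r' = m_{\theta^*,1}$.

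I will split into two cases. First, suppose $r' > 0$. Then $m_{\theta',1} = r' > 0$, so $\max m_{\theta,1} > 0$ and we are in case (iii) of Proposition \ref{p:cases}: the set $K$ where $m_{\theta,1} \geq 0$ is an interval $[A,B]$ of length $\leq \pi$, and $m_{\theta,1}$ is strictly concave on its interior. By Lemma \ref{lem:Dmtheta}, $\frac{d}{d\theta} m_{\theta,1} = m_{\theta,2}$, and by hypothesis $m_{\theta',2} = 0$. Since strict concavity on $(A,B)$ forces the derivative to vanish at a unique interior point, namely $\theta^*$, and since $m_{\theta,1} > 0$ only on $(A,B)$, I conclude $\theta' = \theta^*$ and hence $r' = m_{\theta^*,1}$. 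Corollary \ref{cor:Gammamax} then gives $\bar b = (m_{\theta^*,1}, \theta^*) = (r',\theta')$.

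Second, suppose $r' = 0$. Then on $\KK$ we have $(r',\theta') = \0$, so it suffices to show $\bar b = \0$. Since $m_{\theta',1} = 0$, we have $\max m_{\theta,1} \geq 0$. If the max were strictly positive we would again be in case (iii), but then the argument of the previous paragraph (using $m_{\theta',2} = 0$ combined with strict concavity) would force $r' = m_{\theta^*,1} > 0$, a contradiction. Hence $\max m_{\theta,1} = 0$ (case (i) is excluded since $m_{\theta',1} = 0$), and Corollary \ref{cor:Gammamax} yields $\bar b = \0 = (r',\theta')$.

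I do not foresee a serious obstacle; the whole content is extracting the right uniqueness statement from the strict concavity in Proposition \ref{p:cases}(iii) together with the derivative formula of Lemma \ref{lem:Dmtheta}. The mild subtlety is handling the boundary case $r'=0$ separately to avoid invoking a nonexistent $\theta^*$, and to rule out that $\theta'$ could coincidentally be one of the boundary points $A$ or $B$ of $K$ --- this is immediate because strict concavity at a zero-level maximum forces the one-sided derivatives at $A$ and $B$ to be strictly positive and strictly negative respectively, contradicting $m_{\theta',2} = 0$.
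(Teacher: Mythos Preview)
Your proof is correct and follows essentially the same route as the paper's: both split into the cases $r'>0$ and $r'=0$, use Lemma~\ref{lem:Dmtheta} to translate $m_{\theta',2}=0$ into the vanishing of $\frac{d}{d\theta}m_{\theta,1}$ at~$\theta'$, invoke the strict concavity in Proposition~\ref{p:cases}(iii) to force $\theta'=\theta^*$ (and to rule out $\theta'\in\{A,B\}$ when $r'=0$), and finish with Corollary~\ref{cor:Gammamax}.
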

\begin{proof}
When $r' = m_{\theta',1} > 0$, Proposition~\ref{p:cases}(iii)
holds, and $\theta'$ lies interior to the closed interval $[A,B]$
there.  Due to Corollary\ref{cor:Gammamax}, $m_{\theta,1}$ attains a
unique maximum at $\theta^* \in (A,B)$.  Moreover, $m_{\theta'} =
(r',0)$ and $m_{\theta',2} = 0$ imply, with Lemma~\ref{lem:Dmtheta},
that
\begin{equation}\label{concave1}
  \frac{d}{d \theta} m_{1,\theta} \Big|_{\theta = \theta'} =
  m_{\theta',2} = 0.
\end{equation}
By strict concavity in Proposition~\ref{p:cases}(iii), $\theta^* =
\theta'$, so $\bar b = (r',\theta')$ by Corollary~\ref{cor:Gammamax}.

The case $r' = m_{\theta',1} = 0$ can only occur in cases (ii)
and~(iii) of Proposition~\ref{p:cases}, with $\theta'$ being an
endpoint of the closed interval in case (iii) and anywhere in the
closed interval in case~(ii).  Since (\ref{concave1}) holds
nonetheless, strict concavity in case (iii) cannot be.  Consequently,
$m_{\theta,1} \leq 0$ for all~$\theta$.  Therefore, by
Corollary~\ref{cor:Gammamax}, $\bar b = (0,\theta') = (r',\theta')$.
\end{proof}

We conclude this section with important estimates relating folded
averages $\eta_{\theta,N}$ from~(\ref{etaNdef}) to folded barycenters
$F_\theta b_N$ of empirical distributions on~$\KK$.

\begin{lem}\label{lem:bNetaN2}
Suppose that $b_N = (\hat r, \hat\theta)$ with $\hat r > 0$.  If
$\theta \in \Rm/\alpha\Zm$ and $|\theta - \hat\theta| \leq \pi$, then
$$%
  \|\eta_{\theta,N} - F_\theta b_N\|
  \leq
  \frac{|\theta-\hat\theta|}{N}\sum_{p_n\in\Ical_{(\theta,\hat\theta)}} d(\0,p_n).
$$
In particular, $\eta_{\hat\theta,N} = F_{\hat\theta} b_N$.  Also, if
$p_n \notin \Ical_{(\theta,\hat\theta)}$ for all $n = 1,\ldots,N$,
then $\eta_{\theta,N} = F_\theta b_N$.
\end{lem}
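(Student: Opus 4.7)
The plan is to reduce this lemma to Lemma~\ref{lem:mthetamodulus} applied to the empirical measure $\mu_N$. The key observation is that refolding from $\hat\theta$ to a nearby angle $\theta$ (with $|\theta-\hat\theta|\leq\pi$) is a rigid rotation of $\Rm^2$, which converts the present statement about folded barycenters into one about folded first moments of $\mu_N$, to which Lemma~\ref{lem:mthetamodulus} directly applies.

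First I would handle the case $\theta = \hat\theta$. Because $b_N = (\hat r,\hat\theta)$ is the barycenter of $\mu_N$ with $\hat r > 0$, Corollary~\ref{cor:Gammamax} applied to $\mu_N$ identifies $\hat r = m_{\hat\theta,1}^N = \max_\theta m_{\theta,1}^N$, while Corollary~\ref{cor:thetastar} gives $m_{\hat\theta,2}^N = 0$. Hence $\eta_{\hat\theta,N} = m_{\hat\theta}^N = (\hat r, 0)$. Directly from Definition~\ref{d:folding}, $F_{\hat\theta} b_N = (\hat r, 0)$, so $\eta_{\hat\theta,N} = F_{\hat\theta} b_N$.

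For general $\theta$ with $|\theta - \hat\theta| \leq \pi$, Definition~\ref{d:folding} gives $F_\theta b_N = (\hat r, \hat\theta - \theta)$ in polar coordinates on $\Rm^2$, which equals $\Phi_{\hat\theta - \theta}(\hat r, 0) = \Phi_{\hat\theta - \theta}\, m_{\hat\theta}^N$. Consequently,
$$
  \|\eta_{\theta,N} - F_\theta b_N\|
  =
  \|m_\theta^N - \Phi_{\hat\theta - \theta}\, m_{\hat\theta}^N\|.
$$
Now I apply Lemma~\ref{lem:mthetamodulus} to the probability measure $\mu_N = \frac{1}{N}\sum_{n=1}^N \delta_{p_n}$ and evaluate the bounding integral against the empirical measure:
$$
  \|\eta_{\theta,N} - F_\theta b_N\|
  \leq
  |\theta - \hat\theta| \int_{\Ical_{(\hat\theta,\theta)}} d(\0,p)\,d\mu_N(p)
  =
  \frac{|\theta - \hat\theta|}{N}\sum_{p_n \in \Ical_{(\hat\theta,\theta)}} d(\0, p_n).
$$
Since $|\theta - \hat\theta| \leq \pi$ makes the intervals $(\hat\theta,\theta)$ and $(\theta,\hat\theta)$ coincide as subsets of $\RR/\alpha\ZZ$ by Definition~\ref{d:interval}, this is the claimed bound. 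The last assertion is immediate: if no $p_n$ lies in $\Ical_{(\theta,\hat\theta)}$, the right-hand side vanishes, forcing $\eta_{\theta,N} = F_\theta b_N$.

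I do not anticipate a genuine obstacle; the proof is organizational. The main point to verify carefully is that the rotation $\Phi_{\hat\theta-\theta}$ really does intertwine $F_{\hat\theta}$ and $F_\theta$ when evaluated at $b_N$, which rests on $F_\theta$ preserving the radial coordinate and on the hypothesis $|\theta-\hat\theta|\leq\pi$ landing us in the nondegenerate case of Definition~\ref{d:folding}. The assumption $\hat r > 0$ is used only so that $\hat\theta$ is well defined and Corollaries~\ref{cor:Gammamax} and~\ref{cor:thetastar} apply to $\mu_N$.
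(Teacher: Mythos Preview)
Your proposal is correct and follows essentially the same argument as the paper: both identify $F_{\hat\theta} b_N = m_{\hat\theta}^N$ via Corollary~\ref{cor:Gammamax}, use the rotation identity $F_\theta b_N = \Phi_{\hat\theta-\theta}\,m_{\hat\theta}^N$ for $|\theta-\hat\theta|\leq\pi$, and then invoke Lemma~\ref{lem:mthetamodulus} for the empirical measure~$\mu_N$. The only cosmetic difference is that you separate out the case $\theta=\hat\theta$ explicitly, whereas the paper treats it as part of the general argument.
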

\begin{proof}
This is a consequence of Lemma~\ref{lem:mthetamodulus} applied to the
measure $\mu^N$ and the associated first moments $m^N_\theta =
\eta_{N,\theta}$.  By Corollary~\ref{cor:Gammamax}, $b_N =
(m_{\hat\theta,1}^N, \hat\theta)$ and $m_{\hat \theta,2}^N = 0$
because $\hat r > 0$.  Hence $F_{\hat\theta} b_N = (m^N_{\hat
\theta,1},0) = m^N_{\hat\theta}$.  As $|\theta - \hat\theta |\leq
\pi$, in polar coordinates $F_\theta b_N = (\hat r, \hat\theta -
\theta) = \Phi_{\hat\theta - \theta} F_{\hat\theta}b_N$,~so
$$%
  \|F_{\theta} b_N - \eta_{\theta,N}\|
  =
  \|\Phi_{\hat\theta - \theta} F_{\hat\theta} b_N - \eta_{\theta,N}\|
  =
  \|\Phi_{\hat\theta - \theta} m^N_{\hat\theta} - m_\theta^N\|.
$$
Therefore, by Lemma~\ref{lem:mthetamodulus},
$$%
\hspace{11.5ex}
  \|F_{ \theta}b_N - \eta_{\theta,N}\| \leq |\theta - \hat\theta|
  \int_{\Ical_{(\theta, \hat\theta)}} d(\0,p) \mu_N(p)
  =
  \frac{|\theta - \hat\theta|}{N} \sum_{p_n \in \Ical_{(\theta,
  \hat\theta)}} d(\0,p_n).
\makebox[11.5ex][r]{\qedhere}
$$
\end{proof}

The following is a special version of Corollary~\ref{cor:bNetaN}.

\begin{cor}\label{cor:bNetaN3}
If $\eta_{\theta',N} = (r',0) \in \Rm^2$ with $r' \geq 0$, then $b_N =
(r',\theta')$.
\end{cor}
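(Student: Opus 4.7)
The plan is to recognize Corollary~\ref{cor:bNetaN3} as literally the statement of Corollary~\ref{cor:bNetaN} applied to the empirical probability measure $\mu_N = \frac1N\sum_{n=1}^N \delta_{p_n}$ in place of~$\mu$. By the identity~\eqref{etaNMequiv}, the folded average $\eta_{\theta',N}$ coincides with the folded first moment $m^N_{\theta'}$ of $\mu_N$, and by the defining equation~\eqref{d:sample-barycenter-kale}, the barycenter of $\mu_N$ is exactly~$b_N$. The hypothesis $\eta_{\theta',N} = (r',0)$ thus reads $m^N_{\theta'} = (r',0)$ with $r' \geq 0$, which is precisely the hypothesis of Corollary~\ref{cor:bNetaN} for the measure $\mu_N$. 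The square-integrability condition~\eqref{squareint} required there is automatic for $\mu_N$ since it is supported on finitely many points of~$\KK$.

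First I would record these identifications and cite Corollary~\ref{cor:bNetaN} to conclude $b_N = (r',\theta')$. One could alternatively bypass that corollary and rerun its short argument directly on $\mu_N$: Lemma~\ref{lem:Gammarep} applied to $\mu_N$ gives
$$
  \Gamma_N(r,\theta) = \tfrac{r^2}{2} - r\, m^N_{\theta,1} + \Gamma_N(\0),
$$
so that minimizing first over $r \geq 0$ and then over $\theta$ reduces the problem to showing that $\theta'$ maximizes $m^N_{\theta,1}$ when $r' > 0$ and that $m^N_{\theta,1} \leq 0$ everywhere when $r' = 0$; both follow from Corollary~\ref{cor:thetastar} and Proposition~\ref{p:cases} applied to $\mu_N$, using $m^N_{\theta',2} = 0$ and the strict concavity where $m^N_{\theta,1} > 0$.

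The only thing that requires care is that Proposition~\ref{p:cases} (and hence Corollary~\ref{cor:bNetaN}) is stated under the nondegeneracy Convention~\ref{d:nondegen}, which need not be inherited by the empirical measure $\mu_N$: with positive probability the sampled points $p_1,\dots,p_N$ might all lie on two rays $R_{\theta_1,\theta_2}$ with $|\theta_1 - \theta_2| \geq \pi$. This is the main (minor) obstacle. To dispose of it, I would observe that in the degenerate case $\mu_N$ is supported on a line through the origin isometrically embedded in $\KK$, the function $\Gamma_N$ is then the standard Euclidean squared-distance functional on that line, and the hypothesis $m^N_{\theta'} = (r',0)$ forces the Euclidean mean on the line to equal $(r',\theta')$. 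Alternatively, the explicit formula for $\Gamma_N$ from Lemma~\ref{lem:Gammarep} still holds without nondegeneracy, so minimizing $r^2/2 - r\, m^N_{\theta,1}$ in $(r,\theta)$ delivers $b_N = (r',\theta')$ directly from the hypothesis, without any appeal to Proposition~\ref{p:cases}.
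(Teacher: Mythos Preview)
Your proposal is correct and matches the paper's own approach: the paper simply states that Corollary~\ref{cor:bNetaN3} is ``a special version of Corollary~\ref{cor:bNetaN}'' and gives no further argument, so your identification of $\eta_{\theta',N}=m^N_{\theta'}$ via~\eqref{etaNMequiv} and of $b_N$ as the barycenter of~$\mu_N$ is exactly the intended reduction. Your additional discussion of the nondegeneracy hypothesis for~$\mu_N$ is more careful than the paper itself, which tacitly relies on the uniqueness of barycenters in~$\KK$ (a CAT(0) space) without addressing whether Convention~\ref{d:nondegen} passes to the empirical measure.
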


\section{Proof of the sticky law of large numbers}\label{s:slln}

The standard law of large numbers for folded averages in $\Rm^2$
states that $m_\theta^N \rightarrow m_\theta$ as $N \rightarrow
\infty$.  It holds uniformly in~$\theta$, as follows.

\begin{lem}\label{l:mconv}
For any $\epsilon > 0$, there is a random integer
$N^*_\epsilon(\omega)$ such that $\displaystyle\max_{\theta \in
\ZZ/\alpha \ZZ} \|m_\theta^N - m_\theta\| \leq \epsilon$ for all $N
\geq N^*_\epsilon(\omega)$, $\Pm\text{ almost surely}$.
\end{lem}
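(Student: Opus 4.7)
The plan is to upgrade the pointwise strong law of large numbers to the stated uniform statement by combining it with an equicontinuity estimate for $\theta\mapsto m^N_\theta$ on the compact circle $\RR/\alpha\ZZ$.

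First, I would establish that $\theta\mapsto m_\theta^N$ is Lipschitz continuous, uniformly in $N$ (almost surely, for $N$ large). Applying Lemma~\ref{lem:mthetamodulus} to the empirical measure $\mu_N$ gives
$$
\|\Phi_{\hat\theta-\theta} m^N_{\hat\theta} - m^N_\theta\| \leq |\theta-\hat\theta|\,\bar r^N
\qquad \text{whenever } |\theta-\hat\theta|\leq\pi,
$$
where $\bar r^N = \frac{1}{N}\sum_{n=1}^N d(\0,p_n)$. Since $\Phi_\sigma$ is a rotation, $\|\Phi_\sigma v - v\|\leq |\sigma|\|v\|$, so with the a priori bound $\|m^N_{\hat\theta}\|\leq\sqrt 2\,\bar r^N$ from \eqref{m-rbar:ineq} (applied to $\mu_N$), the triangle inequality yields
$$
\|m^N_\theta - m^N_{\hat\theta}\|
\leq \|m^N_\theta - \Phi_{\hat\theta-\theta} m^N_{\hat\theta}\| + \|\Phi_{\hat\theta-\theta}m^N_{\hat\theta} - m^N_{\hat\theta}\|
\leq (1+\sqrt 2)\,|\theta-\hat\theta|\,\bar r^N.
$$
By the classical SLLN in $\Rm$ applied to the i.i.d.\ integrable variables $d(\0,p_n)$, we have $\bar r^N \to \bar r$ almost surely, hence $\bar r^N \leq 2\bar r$ for all $N$ beyond some random threshold. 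The same Lipschitz bound (with constant $(1+\sqrt 2)\bar r$) holds for the deterministic map $\theta\mapsto m_\theta$.

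Second, I would fix $\epsilon>0$, choose $\delta>0$ so that $2(1+\sqrt 2)\bar r\,\delta < \epsilon/3$, and select a finite $\delta$-net $\theta_1,\dots,\theta_k\in\RR/\alpha\ZZ$ (possible by compactness). For each $\theta_j$, the two coordinates of $m^N_{\theta_j}$ are empirical averages of bounded-by-$d(\0,p_n)$ functions of the i.i.d.\ $p_n$, with expectation $m_{\theta_j}$; the classical SLLN in $\Rm^2$ gives $m^N_{\theta_j}\to m_{\theta_j}$ almost surely. Taking the maximum over the finitely many $j$, there exists a random $N^*_\epsilon(\omega)$ such that for all $N\geq N^*_\epsilon(\omega)$, both $\bar r^N \leq 2\bar r$ and $\max_j \|m^N_{\theta_j} - m_{\theta_j}\| \leq \epsilon/3$ hold almost surely.

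Finally, for arbitrary $\theta\in\RR/\alpha\ZZ$, pick $\theta_j$ with $|\theta-\theta_j|\leq\delta$ and apply the triangle inequality,
$$
\|m^N_\theta - m_\theta\|
\leq \|m^N_\theta - m^N_{\theta_j}\| + \|m^N_{\theta_j} - m_{\theta_j}\| + \|m_{\theta_j} - m_\theta\|
\leq \tfrac{\epsilon}{3} + \tfrac{\epsilon}{3} + \tfrac{\epsilon}{3} = \epsilon,
$$
which is uniform in $\theta$ and completes the proof. The only substantive step is the uniform Lipschitz estimate of the first paragraph; Lemma~\ref{lem:mthetamodulus} already encodes the essential geometry of the kale, so the remaining work is a standard compactness-plus-net argument with no serious obstacle.
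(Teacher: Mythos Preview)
Your proposal is correct and follows essentially the same approach as the paper: both combine the Lipschitz-type estimate from Lemma~\ref{lem:mthetamodulus} (plus the rotation bound $\|\Phi_\sigma v - v\|\leq|\sigma|\|v\|$ and the a priori bound \eqref{m-rbar:ineq}) with the SLLN for $\bar r^N$, then invoke the pointwise SLLN on a finite $\delta$-net and finish with the triangle inequality. The only differences are cosmetic---your Lipschitz constant $(1+\sqrt 2)$ versus the paper's cruder $4$, and your abstract $\delta$-net versus the paper's explicit grid $\theta_k=\alpha k/n$.
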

\begin{proof}
Fix $\epsilon > 0$ and an integer $n > \max\big(24 \alpha \bar
r/\epsilon, \alpha/\pi\big)$, and let $\theta_k = \alpha k/n +
\alpha\ZZ$ for $k = 0,\ldots,n-1$.  Then $|\theta_k - \theta_{k+1}| =
\alpha/n<\pi$.  For any $\theta \in [\theta_k,\theta_{k+1})$,
$$%
  \|\Phi_{\theta_k - \theta}z - z\|
  \leq
  \frac{\alpha\|z\|}{n} \text{ for any } z \in \RR^2
$$
as well as by Lemma~\ref{lem:mthetamodulus},
\begin{equation}\label{rotated-moment:ineq}
  \|\Phi_{\theta_k - \theta} m_{\theta_k} - m_\theta\|
  \leq
  \frac{\alpha\bar{r}}{n}.
\end{equation}
Hence, making also use of~(\ref{m-rbar:ineq}),
\begin{equation}\label{mthetak-mtheta:ineq}
  \|m_{\theta_k} - m_\theta\|
  \,\leq\,
  \|m_{\theta_k} - \Phi_{\theta_k - \theta} m_{\theta_k}\| +
  \|\Phi_{\theta_k - \theta} m_{\theta_k} - m_\theta\|
  \,\leq\,
  4~\frac{\alpha\bar{r}}{n}.
\end{equation}
By the law of large numbers~(\ref{moments:slln}), there is nullset
${\mathcal M}_1$ and an integer $N_1(\omega)$ such that
$\|m^N_{\theta_k} - m_{\theta_k}\| \leq \epsilon/3$ for all $N \geq
N_1(\omega)$, all $k \in \{0,\ldots,n-1\}$, and all $\omega \in \Omega
\setminus {\mathcal M}_1$.  Similarly, by the law of large numbers
there is also a nullset ${\mathcal M}_2$ and an integer $N_2(\omega)$
such that
$$%
  0 \leq \ol{r^N}:=\int_\KK d(\0,p) \,d\mu_N(p) \leq 2 \bar r
$$
for all $N \geq N_2(\omega)$ and all $\omega \in \Omega \setminus
{\mathcal M}_2$.  Applying (\ref{mthetak-mtheta:ineq}) to the
empirical moments gives thus
$$%
  \|m^N_{\theta_k} - m^N_\theta\|
  \leq
  \frac{4\alpha\ol{r^N}}{n}
  \leq
  8\frac{\alpha\bar{r}}{n}
$$
for all $N \geq N_2(\omega)$.  Finally,
\begin{align*}
\|m^N_\theta - m_\theta\|
& \leq \|m^N_\theta - m^N_{\theta_k}\| + \|m^N_{\theta_k} -
  m_{\theta_k}\|  + \|m_{\theta_k} - m_\theta\|
\\
& \leq 8\frac{\alpha \bar r}{n} + \frac{\epsilon}{3} +
  4\frac{\alpha\bar r}n
  <
  \epsilon
\end{align*}
for all $N \geq N^*_\epsilon(\omega) = \max(N_1(\omega),N_2(\omega))$
and $\Omega\ni\omega \not\in {\mathcal M}={\mathcal M}_1\cup {\mathcal
M}_2$.
\end{proof}

Given a set of angles $T \subset \Rm/\alpha\Zm$, define the set
\begin{equation}\label{ATp}%
  C_T^+ = \{(r,\theta) \in \KK \mid r > 0 \text{ and } \theta \in T\}
  = C_T \setminus \{\0\},
\end{equation}
which excludes the origin from the sector~$C_T$
(Definition~\ref{d:sector}).

\begin{thm}\label{t:stickyLLN}
Let $T \subset \Rm/\alpha\Zm$ be a closed subset such that
$m_{\theta,1} < 0$ for all $\theta \in T$.  Then there is a random
integer $N^*(\omega)$ such that
$$%
  b_N(\omega) \notin C_T^+ \text{ for all } N \geq N^*(\omega)
$$
holds $\Pm$-almost surely.  In particular, if $\mu$ is fully sticky
then there is a random integer $N^*(\omega)$ such that $b_N = \0$ for
all $N \geq N^*(\omega)$, $\Pm$-almost surely.  Similarly, if $\mu$ is
partly sticky and $T \subset \Rm/\alpha\Zm$ is any open interval
containing the maximal interval where $m_{\theta,1} = 0$, as described
in Propositions~\ref{p:casesSimple}~and~\ref{p:cases}, then $b_N
\in C_T$ for all $N \geq N^*(\omega)$, $\Pm$-almost surely.
\end{thm}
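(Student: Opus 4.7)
The plan is to combine the characterization of the empirical barycenter in Corollary~\ref{cor:Gammamax} with the uniform law of large numbers in Lemma~\ref{l:mconv}. First I would extract a strict negativity gap: since $T$ is closed in the compact circle $\Rm/\alpha\Zm$ and $\theta\mapsto m_{\theta,1}$ is continuous, the quantity
$$
  \delta = -\max_{\theta\in T} m_{\theta,1}
$$
is strictly positive by hypothesis. Applying Lemma~\ref{l:mconv} with $\epsilon = \delta/2$ then yields an almost-surely finite random $N^*(\omega)$ for which $\|m^N_\theta - m_\theta\|\leq\delta/2$ uniformly in $\theta$ whenever $N\geq N^*(\omega)$; in particular
$$
  m^N_{\theta,1}\leq m_{\theta,1}+\delta/2 \leq -\delta/2 < 0
  \quad\text{for every }\theta\in T\text{ and every }N\geq N^*(\omega).
$$

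The main claim then follows by contradiction. Suppose that for some such $N$ one has $b_N(\omega)\in C^+_T$, i.e., $b_N=(\hat r_N,\hat\theta_N)$ with $\hat r_N>0$ and $\hat\theta_N\in T$, in the notation of \eqref{ATp}. Applying Corollary~\ref{cor:Gammamax} to the empirical measure $\mu_N$ identifies $b_N=(m^N_{\hat\theta_N,1},\hat\theta_N)$ with $\hat\theta_N = \argmax_\theta m^N_{\theta,1}$ and forces $\hat r_N = m^N_{\hat\theta_N,1}>0$. But $\hat\theta_N\in T$ together with the preceding display gives $m^N_{\hat\theta_N,1}\leq -\delta/2<0$, a contradiction. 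Hence $b_N\notin C^+_T$ for all $N\geq N^*(\omega)$, $\Pm$-almost surely.

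Both specializations follow immediately. In the fully sticky case one has $m_{\theta,1}<0$ on all of $\Rm/\alpha\Zm$ (Definition~\ref{d:stickyclassification}), so taking $T = \Rm/\alpha\Zm$ yields $b_N\notin\KK\setminus\{\0\}$, that is $b_N=\0$, for all $N\geq N^*(\omega)$. In the partly sticky case, given an open interval $T\supset[A,B]$ as in the hypothesis, set $T' = (\Rm/\alpha\Zm)\setminus T$; this is closed and disjoint from the maximal interval $[A,B]$ on which $m_{\theta,1} = 0$, so by Proposition~\ref{p:cases} we have $m_{\theta,1}<0$ on $T'$. Applying the general statement to $T'$ gives $b_N\notin C^+_{T'}$ eventually, which by the convention that the origin lies in every sector (Definition~\ref{d:sector}) forces $b_N\in\{\0\}\cup C^+_T = C_T$.

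The only genuine step is the reduction from the stochastic problem to the deterministic gap $\delta>0$, which Lemma~\ref{l:mconv} makes painless; the rest is bookkeeping with the barycenter characterization. I do not anticipate any substantive obstacle beyond carefully matching the inclusions $C^+_T$, $C_T$, and $\{\0\}$ in the two specializations, and confirming that the empirical measure $\mu_N$ satisfies the hypotheses required to invoke Corollary~\ref{cor:Gammamax} almost surely.
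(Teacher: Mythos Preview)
Your proposal is correct and follows essentially the same approach as the paper: extract a uniform negativity gap on $T$ by compactness and continuity, invoke Lemma~\ref{l:mconv} to transfer this gap to $m^N_{\theta,1}$ for all large $N$, and conclude that the empirical barycenter cannot lie at positive radius over $T$. The only cosmetic difference is that the paper argues directly from Lemma~\ref{lem:Gammarep} (showing $\Gamma_N(r,\theta)>\Gamma_N(\0)$ for $\theta\in T$, $r>0$), whereas you pass through Corollary~\ref{cor:Gammamax} by contradiction; the specializations to the fully and partly sticky cases are handled identically.
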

\begin{proof}
Since $T$ is closed and $m_{\theta,1}$ is continuous, there is
$\epsilon > 0$ such that $\sup_{\theta \in T} m_{\theta,1} < -\epsilon
< 0$.  By Lemma~\ref{l:mconv} there is a random integer $N^*(\omega)$
such that $m^N_{\theta,1} < -\epsilon/2$ for all $\theta \in T$,
almost surely for all $N \geq N^*(\omega)$.  Now, $b_N$ is the unique
minimizer of
$$%
  p \mapsto \Gamma_N(p) = \frac{1}{2} \int_\KK d(p,q)^2 \,d \mu_N(q).
$$
Since the empirical measures $\{\mu_N\}_{N=1}^\infty$ are
square-integrable (even if $\mu$ is not),
\begin{equation}\label{eq:Gammaemp}
  \Gamma_N(r,\theta) = \frac{r^2}{2} - r\,m^N_{\theta,1} + \Gamma_N(0)
\end{equation}
by Lemma~\ref{lem:Gammarep}.  Therefore, if $\theta \in T$, and $r >
0$, and $N \geq N^*(\omega)$, then almost surely
$$%
  \Gamma_N(r,\theta)
  >
  \frac{r^2}{2} +\frac{\epsilon}{2} r + \Gamma_N(0) > \Gamma_N(0).
$$
Hence the minimizer $b_N$ lies outside of $C_T^+$ almost surely.
\end{proof}

By a very similar argument, Corollary~\ref{cor:Gammamax} and
Lemma~\ref{l:mconv} together imply the following, which we state
without proof.  It also is a consequence of the strong law of
\cite{Z77}.

\begin{thm}\label{t:llnAint}
Suppose that $\max_\theta m_{\theta,1} = m_{\theta^*,1} > 0$.  Let $T
\subset \Rm/\alpha\Zm$ be any open interval of length $\leq \pi$
containing~$\theta^*$.  There is a random integer $N^*(\omega)$ such
that
$$%
  b_N(\omega) \in C_T^+ \text{ for all } N \geq N^*(\omega)
$$
holds $\Pm$-almost surely.  In particular, if $\mu$ is nonsticky then
for any $\epsilon \in (0,\pi/2)$, the empirical barycenter $b_N$ lies
in $C_{(\theta^*-\epsilon,\theta^*+\epsilon)}^+$ for all $N
>N^*(\omega)$, $\Pm$-almost surely.
\end{thm}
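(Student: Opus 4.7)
The plan is to adapt the proof of Theorem~\ref{t:stickyLLN}: combine the representation (\ref{eq:Gammaemp}) of $\Gamma_N$ with the uniform convergence of folded first moments in Lemma~\ref{l:mconv}, and then invoke Corollary~\ref{cor:Gammamax} to locate~$b_N$. Since $\max_\theta m_{\theta,1} = m_{\theta^*,1} > 0$, Corollary~\ref{cor:thetastar} provides that $\theta^*$ is the unique maximizer of the continuous function $\theta \mapsto m_{\theta,1}$, which is the crucial global input. Thus it suffices to show that, for $N$ large enough, any maximizer of $\theta\mapsto m^N_{\theta,1}$ lies in~$T$ and the maximum is strictly positive.

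First I would produce a uniform gap. Because $T$ is open and contains $\theta^*$, the complement $T^c$ is closed, hence compact in $\Rm/\alpha\Zm$, and avoids~$\theta^*$; uniqueness of the maximizer combined with continuity therefore yields
\[
\delta := m_{\theta^*,1} - \sup_{\theta \in T^c} m_{\theta,1} > 0.
\]
Applying Lemma~\ref{l:mconv} with tolerance $\epsilon_0 := \min(\delta/3,\, m_{\theta^*,1}/2)$ furnishes a random integer $N^*(\omega)$ such that $\sup_\theta \|m^N_\theta - m_\theta\| \leq \epsilon_0$ for all $N \geq N^*$, $\Pm$-almost surely. For such~$N$, one has $m^N_{\theta^*,1} \geq m_{\theta^*,1} - \epsilon_0 > 0$, while for every $\theta \in T^c$
\[
m^N_{\theta,1} \leq m_{\theta,1} + \epsilon_0 \leq m_{\theta^*,1} - 2\delta/3 < m^N_{\theta^*,1}.
\]
Consequently every maximizer of $\theta \mapsto m^N_{\theta,1}$ lies in~$T$, and the maximum is strictly positive.

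Since the empirical measure $\mu_N$ is trivially square-integrable, Corollary~\ref{cor:Gammamax} applied to $\mu_N$ (with $\theta^{**}$ the argmax of $m^N_{\theta,1}$) identifies the empirical barycenter as $b_N = (m^N_{\theta^{**},1}, \theta^{**})$, with $\theta^{**} \in T$ and $m^N_{\theta^{**},1} > 0$, so that $b_N \in C_T^+$. The ``in particular'' claim is then immediate: for nonsticky~$\mu$ and $\epsilon \in (0,\pi/2)$, the interval $T = (\theta^* - \epsilon, \theta^* + \epsilon)$ has length $2\epsilon < \pi$ and contains $\theta^*$, so the general result applies. The only real obstacle is establishing $\delta > 0$, which rests on compactness of $T^c$ together with the fact that $\theta^*$ is the \emph{unique} maximizer of $m_{\theta,1}$; both ingredients are already in hand from earlier in the section, so the remainder of the argument is routine.
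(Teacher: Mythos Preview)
Your argument is correct and is precisely the ``very similar argument'' the paper alludes to but omits: the paper states the theorem without proof, noting that it follows from Corollary~\ref{cor:Gammamax} and Lemma~\ref{l:mconv} by the same mechanism as Theorem~\ref{t:stickyLLN}. Your extraction of the gap $\delta>0$ via uniqueness of $\theta^*$ and compactness of $T^c$, followed by the application of Corollary~\ref{cor:Gammamax} to the empirical measure, fills in exactly those details.
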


We now give the proof the law of large numbers on $\KK$
(Theorem~\ref{t:LLN-intro}) by collecting various results we have
already proved.

\begin{proof}[Proof of Theorem~\ref{t:LLN-intro}]
The fully sticky case is immediate from Theorem~\ref{t:stickyLLN}.
Consider the partly sticky case.  By Corollary~\ref{cor:Gammamax}
applied to the empirical measure~$\mu_N$, the empirical barycenter is
$b_N = \0$ or $b_N = (m^N_{\theta^*,1}, \theta^*)$ where $\theta^*$ maximizes
$\theta \mapsto m^N_{\theta,1}$.  Combining this fact with
Lemma~\ref{l:mconv} leads to the conclusion that
$$%
  \limsup_{N \to \infty} d(b_N,\0)
  =
  \limsup_{N \to \infty} m^N_{\theta^*,1}
  \leq
  \max_\theta m_{1,\theta}
$$
holds $\Pm$-almost surely.  In the partly sticky case,
$m_{1,\theta}\leq 0$ for all $\theta$.  Thus $b_N \to \0$ holds
$\Pm$-almost surely.  The other statements in the partly sticky case
follow from Theorem~\ref{t:stickyLLN}.

Finally, consider the nonsticky case.  Convergence $b_N \to \bar b$
again follows from the representation $b_N = (m^N_{\theta^*,1},
\theta^*)$ where $\theta^*$ maximizes $\theta \mapsto m^N_{\theta,1}$.
By Lemma~\ref{l:mconv} $\Pm$-almost surely any maximizer $\theta^N$ of
$\theta \mapsto m^N_{\theta,1}$ converges, as $N \to \infty$, to the
maximizer of $\theta \mapsto m_{\theta,1}$, which is unique in the
nonsticky case.  By definition of $\bar b$, this implies that $b_N \to
\bar b$, $\Pm$-almost~surely.
\end{proof}

\section{Proofs of the central limit theorems}\label{s:clt1}

This section contains proofs of the three central limit theorems:
Theorem~\ref{t:fullstickyclt}, Theorem~\ref{t:stickyclt}, and
Theorem~\ref{t:non-stickyclt}.  First comes the fully sticky case,
which follows almost immediately from Theorem~\ref{t:LLN-intro}.

\begin{proof}[Proof of Theorem~\ref{t:fullstickyclt}]
Let $N^*$ be the random integer from Theorem~\ref{t:LLN-intro},
which has the property that, $\Pm$-almost surely, $b_N =0$ for all
$N \geq N^*(\omega)$.  If $\phi: \KK \rightarrow \Rm$ is any bounded
function then
\begin{align*}
\Big| \int\phi(p)\,d\nu_N(p) - \phi(\0)\Big|
   & = \big|\Em\phi(b_N) - \phi(\0)\big|
\\ & = \big|\Em(\phi(b_N) - \phi(\0))\mathbf{1}_{N < N^*}\big|
     \leq 2\big(\sup_{p \in \KK} |\phi(p)|\big) \Pm(N < N^*).
\end{align*}
Since $N^*$ is almost surely finite, $\Pm(N < N^*) \rightarrow 0$ as
$N \rightarrow \infty$ which concludes the proof.  Since the bound on
the right hand side depends only on the supremum norm of~$\phi$, the
bound also implies convergence in the total variation norm.
\end{proof}

Next comes the proof of the central limit theorem in the partly
sticky case.

\begin{proof}[Proof of Theorem~\ref{t:stickyclt}]
Let $K = [A,B]$ be the interval on which $m_{\theta,1} = 0$, so
$m_{\theta,1} < 0$ for all $\theta \notin [A,B]$ by hypothesis.  Recall that $\theta^*$ is the midpoint of this interval. Let
$\epsilon \in (0,\pi/4)$.  By Theorem~\ref{t:stickyLLN} there is an
integer $N^*(\omega,\epsilon)$ such that, almost surely, $b_N(\omega)
\in C_{[A_\epsilon, B^\epsilon]}$ if $N \geq N^*(\omega,\epsilon)$,
where $A_\epsilon = A - \epsilon$ and $B^\epsilon = B + \epsilon$.
Since $\nu_N$ is the distribution of the random variable $\sqrt{N}
b_N$ on~$\KK$,
$$%
  \lim_{N \to \infty} \nu_N( C_{[A_\epsilon, B^\epsilon]}) = 1.
$$
Therefore
\begin{eqnarray}
&& \lim_{N\to \infty} \left(\int_{\mathcal{K}} \phi_1(p)\,d\nu_N(p) - \int_{\mathcal{K}} \phi_2(p)\,d\nu_N(p)\right)
 \no \\
&& \quad \quad \quad = \lim_{N\to \infty} \left(\int_{C_{[A_\epsilon,B^\epsilon]}} \phi_1(p)\,d\nu_N(p) - \int_{C_{[A_\epsilon,B^\epsilon]}}\phi_2(p)\,d\nu_N(p)\right) \no
\end{eqnarray}
holds for any bounded continuous function $\phi_1, \phi_2:\mathcal{K} \to \mathbb R$. For this reason it suffices to prove (\ref{stickyclt1}) for continuous bounded functions differing only on $C_{[A_\epsilon,B^\epsilon]}$. Such functions are of the form $\phi = \varphi
\circ F_{\theta^*}$ where $\varphi:\RR^2 \to \RR$ is continuous and
bounded.

\excise{
Therefore
$$%
  \lim_{N \to \infty} \left| \int_\KK \phi_1(p) \,d \nu_N(p) -
  \int_\KK \phi_2(p) \,d \nu_N(p) \right| = 0
$$
holds for bounded functions $\phi_1$ and~$\phi_2$ that agree on
$C_{[A_\epsilon, B^\epsilon]}$.  For any continuous bounded function
$\phi_1: \KK \to \RR$, there is a continuous bounded function
$\varphi: \RR^2 \to \RR$ such that the composite $\phi_2 = \varphi
\circ F_{\theta^*}: \KK \to \RR$ satisfies $\phi_1(p) = \phi_2(p)$ for
all $p \in C_{[A_\epsilon, B^\epsilon]}$.  Therefore, it suffices to
prove (\ref{stickyclt1}) for functions of the form}


Using the convex projection $\hat P_\rho$ from~(\ref{eq:convex
projection}) for $\rho = \frac 12 |A - B|$, let $\zeta_N$ denote the
measure on~$\RR^2$ defined by $\Pm(\sqrt{N} \hat
P_\rho(\eta_{\theta^*,N}) \in W) = \zeta_N(W)$ for Borel sets $W
\subset \RR^2$.  Then $\expE[\eta_{\theta^*,N}] = 0$, because
$m_\theta = \expE[\eta_{\theta,N}]$ for all $\theta\in \RR/\alpha \ZZ$
and $m_{\theta^*} = 0$ by hypothesis.  Recalling
Remark~\ref{r:squareint}, which guarantees square-integrability, the
standard CLT for $\eta_{\theta^*,N}$ in $\RR^2$ implies that the law
of $\sqrt{N} \eta_{\theta^*,N}$ converges to $g$, the law of the
multivariate normal with covariance~(\ref{eq:g}).  Thus
\begin{equation}\label{zetanormal}
  \lim_{N \to \infty} \int_{\RR^2} \varphi(z) d\zeta_N(z) =
  \int_{\RR^2} \varphi(z) d (g \circ \hat P_\rho^{-1}(z))
\end{equation}
holds for any continuous bounded function $\varphi:\RR^2 \to \RR$.  We
claim that for any $\delta > 0$ there is an integer $N_\delta$ such
that
\begin{equation}\label{bNetaNclose}
  \Pm( \sqrt{N}\| F_{\theta^*}  b_N - \hat P_\rho \eta_{{\theta^*},N}
  \| > \delta)  \leq \delta
\end{equation}
holds for all $N \geq N_\delta$.  This estimate and~(\ref{zetanormal})
imply that
\begin{equation}\label{tildenuconv}
  \lim_{N \to \infty} \int_{\RR^2} \varphi(z) d \tilde \nu_N(z) =
  \int_{\RR^2} \varphi(z) d (g \circ \hat P_\rho^{-1}(z))
\end{equation}
where $\tilde \nu_N = \nu_N \circ F^{-1}_{\theta^*}$ is the law of
$\sqrt{N} F_{\theta^*} b_N$ on $\RR^2$.

Recall that $F_{\theta^*}: C_{[A_\epsilon, B^\epsilon]}\to
D_{\rho+\epsilon}$ is bijective, where the sector $D_{\rho+\epsilon}
\subset \RR^2$ is defined by replacing $\rho$ with $\rho+\epsilon$
in~(\ref{eq:Drho}), and $\nu_N( C_{[A_\epsilon, B^\epsilon]}) \to 1$
as $N \to \infty$.  Combining this with~(\ref{tildenuconv}) leads to
the conclusion that~(\ref{stickyclt1}) holds for the continuous
bounded function $\phi = \varphi \circ F_{\theta^*}$:
\begin{align*}
\lim_{N \to \infty} \int_\KK \varphi(F_{\theta^*}(p)) d \nu_N(p)
  & = \lim_{N \to \infty} \int_{\RR^2} \varphi(z) d \tilde \nu_N(z)\\
  & = \int_{\RR^2} \varphi(z) d (g \circ \hat P_\rho^{-1}(z))\\
  & = \int_\KK \varphi(F_{\theta^*}(p)) d (g \circ \hat P_\rho^{-1}
      \circ F_{\theta^*}(p)).
\end{align*}
It remains to prove~(\ref{bNetaNclose}) by estimating $\|F_{\theta^*}
b_N - \hat P_\rho \eta_{{\theta^*},N}\|$.

First, suppose $b_N = (r,\hat\theta) \in C^+_{[A,B]}$.  If $A=B$ then
$\hat\theta = \theta^*$ and thus $\eta_{{\theta^*},N} = F_{\theta^*}
b_N = \hat P_\rho F_{\theta^*} b_N$ by Lemma~\ref{lem:bNetaN2}.  Now
assume $A \neq B$.  Then $\mu(\Ical_\theta) = 0$ for all $\theta \in
[\theta^*,\hat\theta]$ by Corollary~\ref{cor:noshadowmass}, as by
hypothesis $m_{\theta}=0$ for all $\theta \in [\theta^*,\hat\theta]$
and $|\hat\theta-\theta^*|\leq |B-A| < \pi$.  This implies that also
$\nu_N(\Ical_{\theta}) = 0$ for all $\theta \in
[\theta^*,\hat\theta]$.  Since $r > 0$, Lemma~\ref{lem:bNetaN2}
implies that $\eta_{{\theta^*},N} = F_{\theta^*} b_N = \hat P_\rho
F_{\theta^*} b_N$, as desired.

For the remainder of the proof, let $\epsilon \in (0,\pi/4)$ and
assume $b_N \in C_{[A_\epsilon,B^\epsilon]}$ but $b_N \notin
C^+_{[B,A]}$.  Suppose $b_N = (r,\hat\theta)$ with $\hat\theta \in
[B,B^\epsilon]$ and $r \geq 0$; the case $\hat\theta \in
[A_\epsilon,A]$ is treated in the same way.  By
Corollary~\ref{cor:Gammamax} and Lemma~\ref{lem:Dmtheta},
$m^N_{\hat\theta,1} = r$ and $m^N_{\hat\theta,2} = 0$.  Denote by
$$%
  w_N(s) = \frac{1}{N} \sum_{p_n \in \Ical_s} d(\0,p_n),
$$
the sample analog of $w(s)$ from~(\ref{wpmdef2}).  Utilizing the
second equation in Corollary~\ref{cor:odeintegrate},
$$%
  m^N_{B,2} = m^N_{\hat\theta,1} \sin(\hat\theta-B) +
  \int_{B}^{\hat\theta}w_N(\psi) \cos(\psi-B)\,d\psi,
$$
which implies that $m^N_{B,2} \geq 0$.  Moreover, by the first equation of
Corollary~\ref{cor:odeintegrate},
$$%
  m^N_{B,1} = m_{\hat\theta,1}^N \cos(\hat\theta - B) -
  \int_{B}^{\hat\theta} w_N(\psi) \sin(\psi - B)\,d\psi.
$$
Therefore $m^N_{\hat\theta,1} \geq m^N_{B,1} \geq 0$.  Similarly also
\begin{align*}
m^N_{\hat\theta,1}
   & = m_{B,1}^N \cos(\hat\theta - B) + m_{B,2}^N \sin(\hat\theta - B)
       - \int_B^{\hat\theta} w(\psi) \sin(\hat\theta - \psi)\,d\psi
\\ & \leq m_{B,1}^N + \epsilon m_{B,2}^N.
\end{align*}
This shows that $r \in [m_{B,1}^N, m_{B,1}^N + \epsilon m_{B,2}^N]$.
For later use, note that for $\epsilon > 0$ sufficiently small,
\begin{equation}\label{m2B:ineq}
  m^N_{B,2}
  \,\leq\,
  (m^N_{\hat\theta,1} + \bar r)\epsilon
  \,\leq\,
  (m^N_{B,1}+\bar r)\epsilon + m^N_{B,2}\epsilon^2
  \,\leq\,
  3\bar r \epsilon.
\end{equation}
Observe that $\Phi_\rho m^N_B = m^N_{\theta^*}$.  If $A = B$ this is
obvious because $\rho = 0$ and $\theta^* = B$.  If $A \neq B$, then
this follows from Lemma~\ref{lem:mthetamodulus} because
$\nu^N(\Ical_\theta) = 0$ for all $\theta \in [\theta^*,B]$, due to
$\mu(I_\theta) = 0$.  Therefore $\hat P_\rho m^N_{\theta^*} = \hat
P_\rho \Phi_\rho m^N_B = (m_{B,1}^N,\rho)$ in polar coordinates,
because convex projection commutes with rotation,
cf.~Figure~\ref{f:end-of-clt-proof}.  In conjunction with
$F_{\theta^*} b_N = (r, \hat\theta -\theta^*)$, therefore
\begin{figure}[t!]
\centering
\begin{minipage}{0.2\linewidth}
\begin{excise}{
  \begin{tikzpicture}[scale=3]
  \coordinate (O) at (0,0);

  \coordinate (a) at (60:.5);
  \coordinate (A) at (30:1);
  \coordinate (B) at (0:1);
  \coordinate (C) at (-30:1);
  \coordinate (aAB) at ($(A)!(a)!(O)$);

  \draw[dashed] (a)--(aAB);
  \fill (a) circle[radius=.5pt] node[above]{$m_{\theta^*}^N$};
  \fill (aAB) circle[radius=.5pt];

  \draw[solid] (A)--(O)--(C);
  \draw[dashed] (O)--(B);

  \draw pic[draw=orange,arrows={latex[orange, fill=orange]}-{latex[orange,
        fill=orange]},solid,thick, angle eccentricity=1.05, angle
	radius=2cm,right,"$\rho$"] {angle=B--O--A};
  \draw pic[draw=orange,arrows={latex[orange,
        fill=orange]}-{latex[orange, fill=orange]},solid,thick, angle
        eccentricity=1.05, angle radius=2cm,right,"$\rho$"] {angle=C--O--B}; 
  \draw[opacity=0,white] (0,-.95) rectangle (1,.75);
  \end{tikzpicture} 
}\end{excise}
\includegraphics[width=1\textwidth]{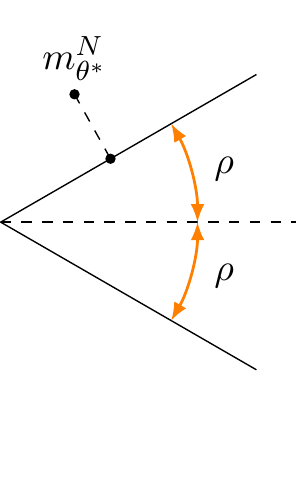}
\end{minipage}
\begin{minipage}{0.2\linewidth}
$$%
  \xleftarrow{\quad\Phi_\rho \quad}
$$
\end{minipage}
\begin{minipage}{0.2\linewidth}
\begin{excise}{
  \begin{tikzpicture}[scale=3]
  \coordinate (O) at (0,0);

   \coordinate (A) at (0:1);
   \coordinate (B) at (-60:1);

  \coordinate (a) at (30:.5);
  \coordinate (aAB) at ($(A)!(a)!(O)$);

  \draw[solid] (A)--(O)--(B);
  \draw pic[draw=orange,arrows={latex[orange, fill=orange]}-{latex[orange,
        fill=orange]},solid,thick, angle eccentricity=1.05, angle
	radius=2.45cm,right,"$2\rho$"] {angle=B--O--A};

  \draw[dashed] (a)--(aAB);
  \fill (a) circle[radius=.5pt] node[above]{$m_{B}^N$};
  \fill (aAB) circle[radius=.5pt] node[below ]{$(m_{B,1}^N,0)$};
  \draw[opacity=0,white] (0,-.95) rectangle (1,.75);
  \end{tikzpicture} 
}\end{excise}
\includegraphics[width=1\textwidth]{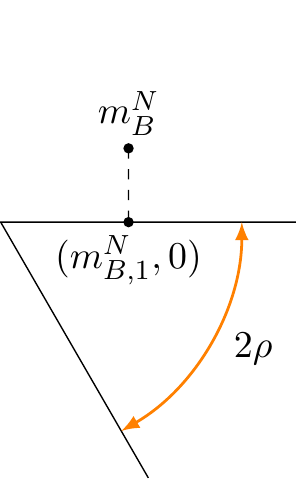}
\end{minipage}
\caption{\label{f:end-of-clt-proof}%
Detail for the proof of (\ref{bNetaNclose}): Convex projection
commutes with rotation.}
\end{figure}
\begin{align*}
\|F_{\theta^*} b_N - \hat P_\rho m_{\theta^*}^N\|^2
   &=\big(r\cos(\hat\theta - \theta^*) - m_{B,1}^N \cos\rho\big)^2 +
     \big(r\sin(\hat\theta - \theta^*) - m_{B,1}^N \sin\rho\big)^2 
\\ &=r^2+ (m_{B,1}^N)^2 - 2r m_{B,1}^N \cos(\hat\theta-B)
\\ &= (r - m_{B,1}^N)^2 + 2r m_{B,1}^N \big(1-\cos(\hat\theta-B)\big)
\\ &\leq (\epsilon m_{B,2}^N)^2 + (m_{B,1}^N + \epsilon m_{B,2}^N) m_{B,1}^N \epsilon^2
\\ &= \epsilon^2(m_{B,1}^N + m_{B,2}^N)^2 -\epsilon^2 m_{B,1}^N m_{B,2}^N (2-\epsilon).
\end{align*}
By applying the same argument when $\hat\theta \in [A_\epsilon,A]$,
upon noting that $m^N_{A,1}, m^N_{A,2} \leq 0$, we conclude that for
$\epsilon$ sufficiently small and $b_N \in C_{[A_\epsilon,A)\cup(B,
B^\epsilon]}$,
$$%
  \|F_{\theta^*} b_N - \hat P_\rho m_{\theta^*}^N\| \leq \epsilon
  \left(m^N_{B,1}+m^N_{B,2}-m^N_{A,1}-m^N_{A,2} \right).
$$
Let $X_N = m^N_{B,1}+m^N_{B,2}-m^N_{A,1}-m^N_{A,2}$; each term in this
sum is the average of $N$ independent random variables in $\RR^2$, and
each term has zero mean since $\mathbb E(m_A^N) = m_A = 0$ and
$\mathbb E(m_B^N)= m_B=0$, by hypothesis.  The Chebychev inequality
implies
\begin{align}
  \Pm\Big(b_N\in C_{[A_\epsilon,B^\epsilon]}, \sqrt{N}\big\|F_{\theta^*}
  b_N - \hat P_\rho \eta_{{\theta^*},N} \big\| > \delta\Big)
  \nonumber&
  \leq \Pm\Big(\sqrt{N}\epsilon |X_N| > \delta\Big)
\\\nonumber&
  \leq \frac{\epsilon^2\mathbb E\big(X^2_N\big)N}{\delta^2}
\\\label{eq:markov}&
  \leq\frac{\delta}{2}\text{ for }\epsilon = \sqrt{C\delta^3/2}
\end{align}
by square-integrability with a constant $C$ that depends only on $\mu$.

By Theorem~\ref{t:stickyLLN} there is an integer
$N^*(\omega,\epsilon)$ such that $b_N \in C_{[B_\epsilon,
A^\epsilon]}$ if $N \geq N^*(\omega,\epsilon)$ for almost surely all
$\omega$.  In particular, given $\delta >0$ there is an integer
$N_{\epsilon,\delta}$ such that
$$%
  \Pm( b_N \in C_{[B_\epsilon, A^\epsilon]} ) \geq 1 - \delta/2
  \text{ for all } N \geq N_{\epsilon,\delta}.
$$
Setting $N_\delta = N_{\epsilon,\delta}$ for $\epsilon=
\sqrt{C\delta^3/2}$ with (\ref{eq:markov}), the above yields the
desired claim~(\ref{bNetaNclose}).
\end{proof}

We conclude with the proof of the central limit theorem for the
nonsticky case.

\begin{proof}[Proof of Theorem~\ref{t:non-stickyclt}]
In the nonsticky case, the barycenter of $\mu$, denoted $\bar b$, is
equal to $(r^*,\theta^*) \in \KK$ where $r^*=m_{\theta^*,1} > 0$ and
$\theta^*$ is the unique angle that maximizes $\theta \mapsto
m_{\theta,1}$.  By Theorem~\ref{t:llnAint}, $b_N \in
C^+_{[\theta^*-\epsilon,\theta^*+\epsilon]}$ for all $N$ sufficiently
large, given any fixed $\epsilon \in (0, \pi/2)$.

The standard CLT for $m_{\theta^*}^N$ in $\Rm^2$ implies that the law of $\sqrt{N}
(m_{\theta^*}^N- F_{\theta^*} \bar b)$ converges weakly to $g$. In cartesian coordinates, $F_{\theta^*} \bar b = (e_1 \cdot F_{\theta^*} \bar b, e_2 \cdot F_{\theta^*} \bar b) = (r^*,0)$. Therefore, to show that the law of the random vector $\sqrt{N}\big(e_1 \cdot F_{\theta^*} b_N - r^*, (1 + \kappa) e_2 \cdot F_{\theta^*} b_N\big)$ also converges weakly to $g$ as $N \to \infty$ (the random variable $\kappa \geq 0$ was defined at (\ref{eq:kappa})), it suffices to show that
for any
$\delta > 0$,
\begin{equation}
\lim_{N \to \infty}  \Pm\Big( \sqrt{N} \norm{\big(e_1 \cdot F_{\theta^*} b_N, (1 + \kappa) e_2
  \cdot F_{\theta^*} b_N\big) - m_{\theta^*}^N } \geq \delta \Big)
    = 0.  \label{goal1}
\end{equation}

Recall from (\ref{eq:Gammaemp}) that the
empirical mean $b_N$ is the unique minimizer of
$$%
  (r,\theta) \mapsto \Gamma(r,\theta) = \frac{r^2}{2} - r
  m_{\theta,1}^N + \Gamma_N(0).
$$
That is, if $b_N = (\hat r, \hat\theta)$, then $\hat\theta$ is the
unique maximizer of the function
$$%
  \theta \mapsto f(\theta) = \frac{(m_{\theta,1}^N)^2}{2}.
$$
The first objective is to show that $|\hat \theta - \theta^*| = O(1/\sqrt{N})$, meaning
that for any $\epsilon > 0$ there are constants $N_\epsilon$,
$C_\epsilon > 0$ such that $\Pm\left( \sqrt{N} |\hat \theta - \theta^*| >
  C_\epsilon \right) \leq \epsilon$ for all $N \geq N_\epsilon$.  Using Corollary~\ref{cor:odeintegrate}, write $m_{\theta,1}^N$ in
terms of $\theta^*$:
\begin{equation}\label{thetarotate}
  m_{\theta,1}^N = m_{\theta^*,1}^N \cos(\theta - \theta^*) +
  m_{\theta^*,2}^N \sin(\theta - \theta^*) - \int_{\theta^*}^\theta
  w_N(\psi) \sin(\theta - \psi) \,d\psi, 
\end{equation}
where
$$%
  w_N(\psi) = \frac{1}{N} \sum_{p_n \in \Ical_\psi} d(\0,p_n).
$$
Because $m_{\theta^*}^N$ satisfies the central limit theorem and because $m_{\theta^*,2} = 0$, this implies that
\begin{align}
  m_{\theta,1}^N
  \no
 & =
    m_{\theta^*,1} \cos(\theta - \theta^*) + O(1/\sqrt{N})
   \cos(\theta - \theta^*) 
\\ \label{mthetaexpasion1}
&  \phantom{=\ }
   \mbox{} + O(1/\sqrt{N})\sin(\theta - \theta^*) - \int_{\theta^*}^\theta
  w_N(\psi) \sin(\theta - \psi) \,d\psi.
\end{align}
For $|\theta - \theta^*| < \pi/2$ the function
\[
\theta \mapsto \int_{\theta^*}^\theta
- w_N(\psi) \sin(\theta - \psi) \,d\psi \leq 0
\]
has a maximum at $\theta = \theta^*$. In view of this and (\ref{mthetaexpasion1}), we conclude that the angle $\hat \theta$ at which the maximum in $\theta \mapsto m^N_{\theta,1}$ is attained must satisfy $|\hat \theta - \theta^*| \leq O(1/\sqrt{N})$.

Now we compare $F_{\theta^*} b_N$ to $m_{\theta^*}^N$ to derive (\ref{goal1}).  Recall that $\Phi_\sigma:\Rm^2 \to \Rm^2$ denotes rotation by angle $\sigma$  (defined just before Lemma \ref{lem:mthetamodulus}). When $|\hat \theta - \theta^*| < \pi$ (which happens almost surely as $N \to \infty$) we have $F_{\theta^*} b_N = \Phi_{\hat \theta - \theta^*} F_{\hat \theta} b_N$. Therefore, by Lemma \ref{lem:bNetaN2}, we have
\begin{equation}
F_{\theta^*} b_N = \Phi_{\hat \theta - \theta^*} F_{\hat \theta} b_N = \Phi_{\hat \theta - \theta^*}  m_{\hat \theta}^N \label{Fhatstar}
\end{equation}
for $N$ large enough. By Corollary \ref{cor:odeintegrate} we also have
\begin{equation}
m_{\hat \theta}^N = \Phi_{\hat \theta - \theta^*}^{-1} m_{\theta^*}^N - V \label{mhatstar}
\end{equation}
where $V = (V_{1,N},V_{2,N})$ is the vector with components
\[
V_{1,N} = \int_{\theta^*}^{\hat \theta} w_N(\psi) \sin(\hat \theta - \psi) \,d \psi, \quad \quad V_{2,N} = \int_{\theta^*}^{\hat \theta} w_N(\psi) \cos(\hat \theta - \psi) \,d \psi.
\]
Hence 
\begin{equation}
e_1 \cdot F_{\theta^*} b_N  - e_1 \cdot m_{\theta^*}^N  = - e_1 \cdot \Phi_{\hat \theta - \theta^*} V = - \cos(\hat \theta - \theta^*) V_{1,N} + \sin(\hat \theta - \theta^*) V_{2,N} \label{e1Ftheta}
\end{equation}
and
\begin{equation}
e_2 \cdot F_{\theta^*} b_N  - e_2 \cdot m_{\theta^*}^N  = - e_2 \cdot \Phi_{\hat \theta - \theta^*} V =  \sin(\hat \theta - \theta^*) V_{1,N} - \cos(\hat \theta - \theta^*) V_{2,N}\label{e2Ftheta}
\end{equation}
for $N$ sufficiently large. Using the fact that $|\hat \theta - \theta^*| \leq O(1/\sqrt{N})$, we find that $|V_{1,N}| = O(1/N)$ and $|V_{2,N}| = O(1/\sqrt{N})$: indeed, 
\[
0 \leq \inf_\psi w_N(\psi) \leq \sup_\psi  w_N(\psi) \leq \frac{1}{N} \sum_{n=1}^N d(\0,p_n)
\]
and the latter converges to $\bar r < \infty$ (recall (\ref{integ})) almost surely as $N \to \infty$. Hence, with probability one,
\[
V_{1,N} \leq 2 \bar r \int_{\theta^*}^{\hat \theta} \sin(\hat \theta - \psi) \,d \psi \leq \bar r (\hat \theta - \theta^*)^2 , \quad \text{and} \quad \quad |V_{2,N}| \leq 2 \bar r|\hat \theta - \theta^*|
\]
hold for all $N$ large enough. Applying this at (\ref{e1Ftheta}) and using $|\hat \theta - \theta^*| \leq O(1/\sqrt{N})$, we obtain
\[
e_1 \cdot F_{\theta^*} b_N  - e_1 \cdot m_{\theta^*}^N  = O(1/N),
\]
by which we mean that for any $\epsilon > 0$, there is $C_\epsilon$ such that
\begin{equation}
\limsup_{N \to \infty} \Pm \left( |e_1 \cdot F_{\theta^*} b_N  - e_1 \cdot m_{\theta^*}^N | \geq C_\epsilon/N \right) \leq \epsilon. \label{goal3}
\end{equation}
In particular, $e_1 \cdot F_{\theta^*} b_N  - e_1 \cdot m_{\theta^*}^N$ is $o(1/\sqrt{N})$ in the sense of (\ref{goal1}).

To complete the proof of (\ref{goal1}), we must show that $(1 + \kappa) e_2 \cdot F_{\theta^*} b_N  - e_2 \cdot m_{\theta^*}^N$ is $o(1/\sqrt{N})$, as well. We will use (\ref{e2Ftheta}) and a more subtle estimate of $V_{2,N}$ and of $\hat \theta - \theta^*$.  From (\ref{mhatstar}) and the fact that $e_2 \cdot m_{\hat \theta}^N = m_{\hat \theta,2}^N = 0$ (by Lemma~\ref{lem:bNetaN2}), we have 
\begin{align}
  0 = m_{\hat \theta,2}^N &   =
  -m_{\theta^*,1}^N \sin(\hat \theta - \theta^*) + m_{\theta^*,2}^N
  \cos(\hat \theta - \theta^*) - V_{2,N} \no \\
& = - m^N_{\theta^*,1} (\hat\theta - \theta^*) + m_{\theta^*,2}^N  + O(N^{-1}) - V_{2,N}. \label{m2wN}
\end{align}
(We used $|\hat \theta - \theta^*| \leq O(1/\sqrt{N})$ again in the last equality.) As the next lemma shows, the integral term $V_{2,N}$ is approximated by $r^* \kappa (\hat \theta - \theta^*)$, where the random variable $\kappa$ was defined at (\ref{eq:kappa}): $r^* \kappa = w^-(\theta^*)$ if $\hat \theta > \theta^*$, and $r^* \kappa = w^+(\theta^*)$ if $\hat \theta < \theta^*$, and $r^* \kappa = 0$ if $\hat \theta = \theta^*$.

\begin{lem} \label{lem:V2}
Let $\hat \theta$ be the angular coordinate of $b_N$. Let $U^+_N$ be the event that $\hat \theta > \theta^*$, and let $U^-_N$ be the event that $\hat \theta < \theta^*$. If $Z_N$ is the random variable
\begin{align}
Z_N & =  \mathbb{I}_{U^+_N} \cdot \left| \int_{\theta^*}^{\hat \theta}
    w_N(\psi) \cos(\hat\theta - \psi) \,d\psi - w^-(\theta^*)(\hat \theta - \theta^*)  \right| \no \\
& \phantom{=\ } + \mathbb{I}_{U^-_N} \cdot \left| \int_{\theta^*}^{\hat \theta}
    w_N(\psi) \cos(\hat\theta - \psi) \,d\psi - w^+(\theta^*)(\hat \theta - \theta^*)  \right|,
\end{align}
then $Z_N$ is $o(1/\sqrt{N})$ in probability as $N \to \infty$: for any $\delta > 0$, 
\[
\lim_{N \to \infty} \Pm( Z_N > \delta/\sqrt{N}) = 0.
\]
\end{lem}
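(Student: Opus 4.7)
The plan is to approximate $\int_{\theta^*}^{\hat\theta} w_N(\psi)\cos(\hat\theta - \psi)\,d\psi$ by $w^\mp(\theta^*)(\hat\theta - \theta^*)$ on each of $U^\pm_N$, with error $o(1/\sqrt{N})$ in probability. Throughout I rely on the bound $|\hat\theta - \theta^*| = O_p(1/\sqrt{N})$ established just above in the proof of Theorem~\ref{t:non-stickyclt}, and on the almost-sure bound $w_N(\psi) \leq \frac{1}{N}\sum_{n=1}^N d(\0,p_n) \leq 2\bar r$ for $N$ large. Since $|1 - \cos(\hat\theta - \psi)| \leq (\hat\theta - \theta^*)^2/2$ for $\psi$ in the integration range, replacing the cosine factor by $1$ changes the integral by at most $O_p(|\hat\theta - \theta^*|^3) = O_p(N^{-3/2})$, which is $o_p(1/\sqrt{N})$; it then suffices to estimate $\int_{\theta^*}^{\hat\theta} w_N(\psi)\,d\psi$.

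Focus on $U^+_N$, so $\hat\theta > \theta^*$. By Fubini,
$$
\int_{\theta^*}^{\hat\theta} w_N(\psi)\,d\psi = \frac{1}{N}\sum_{n=1}^N d(\0,p_n)\,L_n,
$$
where $L_n$ is the Lebesgue measure of $\{\psi \in (\theta^*,\hat\theta] : p_n \in \Ical_\psi\}$. As $\psi$ sweeps from $\theta^*$ to $\hat\theta$, the shadow rotates, and the only sample points whose membership in $\Ical_\psi$ can change are those with angle in the narrow windows $W_N^+ = (\theta^* + \pi,\hat\theta + \pi]$ (points leaving the shadow) or $W_N^- = (\theta^* - \pi,\hat\theta - \pi)$ (points entering). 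A direct case analysis yields $L_n = (\hat\theta - \theta^*)\mathbb{I}_{p_n \in \Ical_{\theta^*}^-} - \delta_n^+ + \delta_n^-$ with $\delta_n^\pm \in [0,\hat\theta - \theta^*]$ supported on $W_N^\pm$ respectively; note that $\Ical_{\theta^*}^-$ is precisely the one-sided limit $\lim_{\psi \downarrow \theta^*}\Ical_\psi$, which automatically incorporates any mass of $\mu$ on the rays at $\theta^* \pm \pi$. Consequently
$$
\int_{\theta^*}^{\hat\theta} w_N(\psi)\,d\psi = (\hat\theta - \theta^*)\,w_N^-(\theta^*) + R_N,
$$
where $w_N^-(\theta^*) = \frac{1}{N}\sum_{p_n \in \Ical_{\theta^*}^-}d(\0,p_n)$ and $|R_N| \leq (\hat\theta - \theta^*)(A_N^+ + A_N^-)$ with $A_N^\pm = \frac{1}{N}\sum_{\theta_{p_n} \in W_N^\pm}d(\0,p_n)$.

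To finish, write the target difference as $(\hat\theta - \theta^*)(w_N^-(\theta^*) - w^-(\theta^*)) + R_N$. By square-integrability~(\ref{squareint}) and the CLT applied to the i.i.d.\ variables $d(\0,p_n)\mathbb{I}_{p_n \in \Ical_{\theta^*}^-}$, one has $w_N^-(\theta^*) - w^-(\theta^*) = O_p(1/\sqrt{N})$, so the first summand is $O_p(1/N) = o_p(1/\sqrt{N})$. For $R_N$, given $\epsilon > 0$ choose $c$ with $\Pm(\sqrt{N}|\hat\theta - \theta^*| > c) < \epsilon$ for all $N$ large; on the complementary event, $W_N^\pm$ is contained in a deterministic window $\tilde W^\pm(c)$ of width $c/\sqrt{N}$, open at the boundary ray $\theta^* \pm \pi$. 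Its $\mu$-mass weighted by $d(\0,\cdot)$, namely $\int_{\tilde W^\pm(c)} d(\0,p)\,d\mu(p)$, tends to $0$ as $N \to \infty$ by dominated convergence and~(\ref{integ}), since $\bigcap_N \tilde W^\pm(c) = \emptyset$. Markov then gives $A_N^\pm = o_p(1)$, and therefore $\sqrt{N}|R_N| \leq c(A_N^+ + A_N^-) = o_p(1)$ on the event.

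The case $U^-_N$ is the mirror image: sweeping $\psi$ downward through $\theta^*$ one has $\lim_{\psi \uparrow \theta^*}\Ical_\psi = \Ical_{\theta^*}^+$, which includes the opposite boundary ray, so $w^+(\theta^*)$ replaces $w^-(\theta^*)$, and the signs in the integral exchange consistently. The main obstacle is precisely the boundary-window step: because $\mu$ may have arbitrary (possibly atomic) mass near or on the rays $\theta^* \pm \pi$, the empirical $w_N$ is generally discontinuous there, and one cannot control $|w_N(\psi) - w_N^-(\theta^*)|$ pointwise in any useful way. The workaround is the sandwich by a deterministic shrinking window and dominated convergence, which is the sharpest tool available under the bare integrability hypothesis~(\ref{integ}).
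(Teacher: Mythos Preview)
Your proof is correct and follows essentially the same approach as the paper: both separate out a main term $w_N^-(\theta^*)(\hat\theta-\theta^*)$, control the empirical--population discrepancy $w_N^-(\theta^*)-w^-(\theta^*)$ via the CLT, and bound the boundary-window remainder by trapping it in a deterministic shrinking arc and invoking dominated convergence. The only cosmetic differences are that you strip the cosine factor at the outset and reach the remainder via an explicit Fubini computation of~$L_n$, whereas the paper keeps the cosine in its $T_1+T_2+T_3$ decomposition and bounds $|w_N(\psi)-w_N^-(\theta^*)|$ through the symmetric difference $\Ical_\psi\Delta\Ical_{\theta^*}^-$.
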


By combining Lemma \ref{lem:V2} and (\ref{m2wN}), we derive
\begin{align}
0 & = - m^N_{\theta^*,1} (\hat\theta - \theta^*)  +  m_{\theta^*,2}^N - r^* \kappa (\hat \theta - \theta^*) + O(1/N) + o(1/\sqrt{N}),
\end{align}
and thus
\begin{equation}
  \hat\theta - \theta^* = \frac{m_{\theta^*,2}^N}{m_{\theta^*,1}^N +
  r^* \kappa } + o(1/\sqrt{N}). \label{thetadiff}
\end{equation}
Recalling that $|V_{1,N}| = O((\hat \theta - \theta^*)^2) = O(1/N)$, we now combine (\ref{e2Ftheta}) with Lemma \ref{lem:V2} and (\ref{thetadiff}) to obtain
\begin{align}
  e_2 \cdot F_{\theta^*} b_N - m_{\theta^*,2}^N  & = - V_{2,N} + O(1/N) \no \\
& = - r^* \kappa (\hat \theta - \theta^*) +  o(1/\sqrt{N}) \no \\
& =  - m_{\theta^*,2}^N \frac{r^* \kappa }{m_{\theta^*,1}^N +
   r^* \kappa } + o(1/\sqrt{N}). \label{goal2}
\end{align}
In the case $w^\pm(\theta^*) = 0$, we have $\kappa = 0$, so (\ref{goal1}) follows from (\ref{goal2}) and (\ref{goal3}). However, when $w^\pm(\theta^*) \neq 0$, (\ref{goal2}) implies that
\begin{align}
  e_2 \cdot F_{\theta^*} b_N - m_{\theta^*,2}^N  & =  - m_{\theta^*,2}^N \frac{r^* \kappa }{r^* +
   r^* \kappa } + o(1/\sqrt{N}), \no
\end{align}
because $m_{\theta^*,1}^N \to r^*$ and $m_{\theta^*,2}^N \to m_{\theta^*,2} = 0$ as $N \to \infty$. Therefore,
$$%
  (1 + \kappa) e_2 \cdot F_{\theta^*} b_N -  m_{\theta^*,2}^N =  o(1/\sqrt{N}).
$$
This and (\ref{goal3}) imply (\ref{goal1}), as desired. Except for the proof of Lemma \ref{lem:V2}, the proof of Theorem \ref{t:non-stickyclt} is complete.
\end{proof}

\begin{proof}[Proof of Lemma \ref{lem:V2}]
We will restrict our attention to the case that $\hat \theta > \theta^*$ (in the event $U_N^+$, which is equivalent to $e_2 \cdot F_{\theta^*} b_N > 0$); the other case is analyzed in the same way.  We decompose the integral as
\begin{eqnarray}
\int_{\theta^*}^{\hat \theta} w_N(\psi) \cos(\hat \theta - \psi) \,d\psi & = & w^-(\theta^*) \int_{\theta^*}^{\hat \theta}  \cos(\hat \theta - \psi) \,d\psi  +  \left(w_N^-(\theta^*) - w^-(\theta^*) \right) \int_{\theta^*}^{\hat \theta}  \cos(\hat \theta - \psi) \,d\psi \no \\
& & + \int_{\theta^*}^{\hat \theta}  \left(w_N(\psi) - w_N^-(\theta^*) \right)  \cos(\hat \theta - \psi) \,d\psi \no \\
& = & T_1 + T_2 + T_3, \label{T123def}
\end{eqnarray}
where 
\[
w_N^-(\psi) = \frac{1}{N} \sum_{p_n \in \Ical_{\psi}^- } d(\0,p_n).
\]
Now we estimate each of the terms $T_1$, $T_2$, and $T_3$, in (\ref{T123def}) using the fact that $\hat \theta - \theta^* = O(1/\sqrt{N})$, which was proved independently of this lemma. First,
\begin{eqnarray}
T_1 & = & w^-(\theta^*) \int_{\theta^*}^{\hat \theta}  \cos(\hat \theta - \psi) \,d\psi \no \\
& = &  w^-(\theta^*) \left( (\hat \theta - \theta^*) + O((\hat \theta - \theta^*)^3) \right) = w^-(\theta^*)(\hat \theta - \theta^*) + O(1/N). \no
\end{eqnarray}
For $T_2$, we apply the CLT to 
\[
\left(w_N^-(\theta^*) - w^-(\theta^*) \right) = \frac{1}{N} \sum_{n=1}^N \left( d(\0,p_n) \mathbb{I}_{\Ical^-_{\theta^*}}(p_n) - \expE[ d(\0,p_n) \mathbb{I}_{\Ical^-_{\theta^*}}(p_n)] \right)
\]
which is a sum of independent, identically-distributed random variables with zero mean and finite variance (due to square integrability condition (\ref{squareint})). Hence $\text{Var}\left(w_N^-(\theta^*) - w^-(\theta^*) \right) = O(1/N)$ and $\left(w_N^-(\theta^*) - w^-(\theta^*) \right) = O(1/\sqrt{N})$, which implies that
\[
T_2 = \left(w_N^-(\theta^*) - w^-(\theta^*) \right) \int_{\theta^*}^{\hat \theta}  \cos(\hat \theta - \psi) \,d\psi = O(1/\sqrt{N}) O(\hat \theta - \theta^*) = O(1/N).
\]

Finally, we show that the term $T_3 =  \int_{\theta^*}^{\hat \theta}  \left(w_N(\psi) - w_N^-(\theta^*) \right)  \cos(\hat \theta - \psi) \,d\psi$ is $o(1/\sqrt{N})$. If $\Ical_{\psi} \Delta \Ical^-_{\theta^*} = (\Ical_{\psi} \cup \Ical^-_{\theta^*} ) \setminus (\Ical_{\psi} \cap \Ical^-_{\theta^*})$ denotes the symmetric difference of the shadows, then 
\[
\left|w_N(\psi) - w_N^-(\theta^*) \right| \leq \frac{1}{N} \sum_{n=1}^N \left( d(\0,p_n) \mathbb{I}_{\Ical_{\psi} \Delta \Ical^-_{\theta^*}}(p_n) \right).
\]
Recall that we are assuming $\theta^* \leq \hat \theta$.  We may also assume that $|\hat \theta - \theta^*| < \min(\alpha - \pi, \pi/2)$ (which happens with probability approaching $1$ as $N \to \infty$), then $\Ical_{\psi} \Delta \Ical^-_{\theta^*}  \subset \Ical_{\hat \theta} \Delta \Ical^-_{\theta^*}$ for $\psi \in (\theta^*, \hat \theta)$, and therefore
\begin{eqnarray}
|T_3| & \leq & \int_{\theta^*}^{\hat \theta}  \left|w_N(\psi) - w_N^-(\theta^*) \right|  \cos(\hat \theta - \psi) \,d\psi \no \\
& \leq  & \frac{1}{N} \sum_{n=1}^N  d(\0,p_n)   \mathbb{I}_{\Ical_{\hat \theta} \Delta \Ical^-_{\theta^*}}(p_n) \int_{\theta^*}^{\hat \theta} \cos(\hat \theta - \psi) \,d\psi \no \\
& \leq  & \frac{1}{N} \sum_{n=1}^N  d(\0,p_n)   \mathbb{I}_{\Ical_{\hat \theta} \Delta \Ical^-_{\theta^*}}(p_n) |\hat \theta - \theta^*|.
\end{eqnarray}
Fix $\epsilon > 0$ small and let $C_\epsilon > 0$ be such that $\Pm( \hat \theta \in [\theta^* -  \frac{C_\epsilon}{\sqrt{N}}, \theta^* + \frac{C_\epsilon}{\sqrt{N}}]) \geq 1 - \epsilon$ for all $N$ large enough. Then, with probability exceeding $1 - 2\epsilon$, we have
\[
\mathbb{I}_{U_N^+} \cdot |T_3| \leq \frac{C_\epsilon}{\sqrt{N}} \frac{1}{N} \sum_{n=1}^N  d(\0,p_n)   \mathbb{I}_{\Ical_{\hat \theta} \Delta \Ical^-_{\theta^*}}(p_n)\leq  \frac{C_\epsilon}{\sqrt{N}} \frac{1}{N} \sum_{n=1}^N  d(\0,p_n)   \mathbb{I}_{\Ical_{\bar \theta_N^\epsilon} \Delta \Ical^-_{\theta^*}}(p_n)
\]
for $N$ large enough, where the angle $\bar \theta_N^\epsilon = \theta^* + \frac{C_\epsilon}{\sqrt{N}}$ is now non-random. The random variables
\[
\xi_n =  d(\0,p_n)   \mathbb{I}_{\Ical_{\bar \theta_N^\epsilon} \Delta \Ical^-_{\theta^*}}(p_n), \quad n = 1,2,\dots,N
\]
are independent and identically distributed with mean and variance
\[
\expE[\xi_n]  = \int_{\Ical_{\bar \theta_N^\epsilon} \Delta \Ical^-_{\theta^*}} d(\0,p) \,d\mu(p), \quad \quad 
\text{Var}(\xi_n) = \expE[\xi_n^2] - \expE[\xi_n]^2 \leq \int_{\Ical_{\bar \theta_N^\epsilon} \Delta \Ical^-_{\theta^*}} d(\0,p)^2 \,d\mu(p).
\]
Due to the square integrability condition (\ref{squareint}), both $\expE[\xi_n]$ and $\text{Var}(\xi_n)$ are finite. Moreover, since
\[
 \Ical_{\bar \theta_N^\epsilon} \Delta \Ical^-_{\theta^*} = \left \{ (r,\theta) \in \KK \;|\; r > 0,\;\; \theta \in (\theta^* - \pi, \bar \theta_N^\epsilon - \pi) \cup [ \bar \theta_N^\epsilon + \pi, \theta^* + \pi) \right\}
\]
we have
\[
\bigcap_{N \geq 1}  \Ical_{\bar \theta_N^\epsilon} \Delta \Ical^-_{\theta^*}  = \emptyset. 
\]
Hence, $\mu(\Ical_{\bar \theta_N^\epsilon} \Delta \Ical^-_{\theta^*})  \to 0$ as $N \to \infty$, and 
\begin{equation} \label{momentvanish}
\lim_{N \to \infty} \int_{\Ical_{\bar \theta_N^\epsilon} \Delta \Ical^-_{\theta^*}} d(\0,p) \,d\mu(p) = 0, \quad \quad \lim_{N \to \infty} \int_{\Ical_{\bar \theta_N^\epsilon} \Delta \Ical^-_{\theta^*}} d(\0,p)^2 \,d\mu(p) = 0.
\end{equation}
Thus, both $\expE[\xi_n]$ and $\text{Var}(\xi_n)$ vanish as $N \to \infty$.  Consequently, for any $\delta > 0$,
\begin{eqnarray}
\limsup_{N \to \infty} \Pm \left( |T_3| \geq \frac{\delta}{\sqrt{N}}, \quad \hat \theta > \theta^* \right) \leq 2\epsilon + \limsup_{N \to \infty} \Pm\left( C_\epsilon \frac{1}{N} \sum_{n=1}^N \xi_n > \delta \right) = 2\epsilon.
\end{eqnarray}
As $\epsilon > 0$ is arbitrary, we conclude that $T_3 = o(1/\sqrt{N})$. The result now follows by combining these estimates of $T_1$, $T_2$, and $T_3$.

Note: the reason we prove that $Z_N$ is $o(1/\sqrt{N})$ rather than a stronger statement like $Z_N \leq O(1/N)$, is that we have no control over the rate at which $\mu(\Ical_{\bar \theta_N^\epsilon} \Delta \Ical^-_{\theta^*})  \to 0$ as $N \to \infty$ or on the rate of convergence in (\ref{momentvanish}), unless we make more assumptions about $\mu$.

\end{proof}

\section{Topological definition of sticky mean}\label{sec:variations}%

\subsection{Topological version for kale}\label{sub:kale}

Let $\mathcal{M}_1$ be the set of all finite Borel measures $\mu$ on
$\KK$ satisfying the integrability condition (\ref{integ}).  This
section considers how the mean (or barycenter) of a measure $\mu \in
\mathcal{M}_1$ varies under perturbations of the measure.  For this
reason, we temporarily modify the notation for $m_{\theta,1}$ to
$m_{\theta,1}(\mu)$, to reflect the measure $\mu$ being used.  It is
then easy to see that for $\mu, \nu \in \mathcal{M}_1$,
\begin{align}\label{m1additive}
  m_{\theta,1}(\mu + \epsilon\nu)
  =
  m_{\theta,1}(\mu) + \epsilon m_{\theta,1}(\nu).
\end{align}
Two measures $\mu, \nu \in \mathcal{M}_1$ are considered equivalent if
they differ only in their total mass, meaning that there is a constant
$c >0$ with $\mu = c \nu$.  Denote the space of equivalence classes by
$\mathcal{\widetilde M}_1$.  Endow $\mathcal{M}_1$ with the topology
generated by the Wasserstein metric defined by
$$%
  \rho(\mu,\nu) = \sup_{f \in\mathrm{Lip}_1} \left( \int f d\mu \,- \int f d\nu \right),
$$
where $\mathrm{Lip}_1$ is the set of real-valued, Lipschitz-continuous
functions on $\KK$ with Lipschitz constant 1.  This topology extends
to $\mathcal{\widetilde M}_1$ by declaring the distance between $\mu$
and~$\nu$ to be the Wasserstein distance $\rho(\mu,\nu)$ when $\mu$
and~$\nu$ are normalized so that $\mu(\KK) = \nu(\KK) = 1$.

Now comes the first in a sequence of results leading us to a
definition of sticky and nonsticky that is more topological than
Definition~\ref{d:stickyclassification}.

\begin{lem}\label{lem:StickyTopDef}
Let $\mu \in \mathcal{\widetilde M}_1$ be fully sticky.  There exists
an open neighborhood $U$ of $\mu$ so that $\nu \in U$ implies
(i)~$\nu$ is fully sticky and (ii)~$\mu$ and $\nu$ have the same mean.
\end{lem}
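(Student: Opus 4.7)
The plan is to exploit compactness of the circle of angles together with the continuity of $\theta \mapsto m_{\theta,1}(\mu)$ (Lemma~\ref{lem:Dmtheta}) to upgrade the pointwise strict inequality $m_{\theta,1}(\mu) < 0$ (which holds for all $\theta$ because $\mu$ is fully sticky) to a uniform bound $m_{\theta,1}(\mu) \leq -\delta$ for some $\delta > 0$. Once we have such a uniform gap, the lemma will follow by showing that the functional $\mu \mapsto m_{\theta,1}(\mu)$ is Lipschitz in the Wasserstein metric with a constant independent of $\theta$, so that a small enough Wasserstein ball around $\mu$ (in the normalized representatives of~$\widetilde{\mathcal{M}}_1$) preserves strict negativity and hence full stickiness; the mean of any such $\nu$ is then $\0$ by Definition~\ref{d:meanK}, matching that of~$\mu$.

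First, I would fix a probability-measure representative of $\mu$ and invoke continuity of $\theta \mapsto m_{\theta,1}(\mu)$ on the compact circle $\Rm/\alpha\Zm$ to produce $\delta > 0$ with
$$
  \max_{\theta \in \Rm/\alpha\Zm} m_{\theta,1}(\mu) \leq -\delta.
$$
Next, for each $\theta$ consider the function $f_\theta\colon\KK \to \Rm$ defined by $f_\theta(p) = e_1 \cdot F_\theta(p)$. By Lemma~\ref{l:distance-under-folding}, $F_\theta$ is a $1$-Lipschitz map from $(\KK,d)$ to $(\Rm^2,d_2)$, and since coordinate projection is $1$-Lipschitz on $\Rm^2$, we obtain $|f_\theta(p) - f_\theta(q)| \leq d(p,q)$, so $f_\theta \in \mathrm{Lip}_1$ \emph{uniformly in~$\theta$}. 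Consequently, working with probability-measure representatives,
$$
  \bigl|m_{\theta,1}(\mu) - m_{\theta,1}(\nu)\bigr|
  = \left|\int_\KK f_\theta\,d\mu - \int_\KK f_\theta\,d\nu\right|
  \leq \rho(\mu,\nu)
$$
holds for every angle~$\theta$.

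Finally, I would set $U = \{\nu \in \widetilde{\mathcal{M}}_1 : \rho(\mu,\nu) < \delta/2\}$, interpreted as a Wasserstein ball on normalized representatives; this $U$ is open in the topology on~$\widetilde{\mathcal{M}}_1$ described in the paper. For any $\nu \in U$ the uniform Lipschitz bound gives $m_{\theta,1}(\nu) \leq m_{\theta,1}(\mu) + \delta/2 \leq -\delta/2 < 0$ for every~$\theta$, so the set where $m_{\theta,1}(\nu) \geq 0$ is empty; thus $\nu$ is fully sticky. Then Definition~\ref{d:meanK} assigns $\0$ as the mean of both $\mu$ and $\nu$, giving conclusion~(ii).

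I do not expect a serious obstacle here; the proof is short once one recognizes that $f_\theta$ is $1$-Lipschitz \emph{uniformly in}~$\theta$, which is what makes the single Wasserstein inequality control $m_{\theta,1}$ simultaneously for every direction. The only minor points to verify carefully are that the topology on $\widetilde{\mathcal{M}}_1$ via normalized representatives really does make the ball $U$ well defined (which is immediate from the equivalence relation being scaling), and that by the linearity~(\ref{m1additive}) the sign of $m_{\theta,1}$ is an invariant of the equivalence class, so the conclusion does not depend on the choice of representative.
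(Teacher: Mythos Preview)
Your proposal is correct and follows essentially the same approach as the paper: both use Lemma~\ref{l:distance-under-folding} to show $p \mapsto e_1 \cdot F_\theta(p)$ lies in $\mathrm{Lip}_1$ uniformly in~$\theta$, deduce $\sup_\theta |m_{\theta,1}(\mu) - m_{\theta,1}(\nu)| \leq \rho(\mu,\nu)$, combine this with a uniform gap $m_{\theta,1}(\mu) \leq -\epsilon$, and take a Wasserstein ball of radius~$\epsilon/2$. Your version is slightly more explicit about invoking compactness of $\Rm/\alpha\Zm$ to obtain the gap and about the well-definedness on equivalence classes, but the argument is the same.
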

\begin{proof}
Since the function $e_1 \cdot F_\theta: \KK \to \RR$ is in
$\textrm{Lip}_1$, Lemma~\ref{l:distance-under-folding} yields
\begin{equation}\label{rhom1}
  \sup_\theta |m_{\theta,1}(\mu) - m_{\theta,1}(\nu)| \leq \rho(\mu,\nu)
\end{equation}
for any two measures $\mu, \nu \in \mathcal{\widetilde M}_1$.  If
$\mu$ is fully sticky, then there exists $\epsilon > 0$ so that
$m_{\theta,1}(\mu) \leq - \epsilon < 0$ for all~$\theta$.  Therefore,
if $\rho(\mu,\nu) \leq \epsilon/2$ then $m_{\theta,1}(\nu) \leq
-\epsilon/2<0$ holds for all~$\theta$.  Hence, by
Definition~\ref{d:stickyclassification}, $\nu$ is also fully sticky.
Since all fully sticky measures on the kale~$\KK$ have mean~$\0$, we
conclude that $\mu$ and $\nu$ have the same means.
\end{proof}

\begin{lem}\label{cor:nonStickyOpen}
The set of fully sticky measures is an open subset of
$\mathcal{\widetilde M}_1$, as is the set of nonsticky measures.
\end{lem}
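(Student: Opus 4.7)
The fully sticky part is exactly the content of Lemma \ref{lem:StickyTopDef}: the open ball $U = \{\nu \mid \rho(\mu,\nu) < \epsilon/2\}$ constructed there consists entirely of fully sticky measures, so the fully sticky set is open.

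For the nonsticky part, the plan is to exploit the Lipschitz bound
\begin{equation*}
  \sup_\theta |m_{\theta,1}(\mu) - m_{\theta,1}(\nu)| \leq \rho(\mu,\nu)
\end{equation*}
from~(\ref{rhom1}) together with the trichotomy in Proposition~\ref{p:casesSimple}. Suppose $\mu$ is nonsticky. Case (iii) of Proposition~\ref{p:casesSimple} (equivalently Proposition~\ref{p:cases}) gives an angle $\theta^*$ with $c := m_{\theta^*,1}(\mu) = \max_\theta m_{\theta,1}(\mu) > 0$. Set $U = \{\nu \in \mathcal{\widetilde M}_1 \mid \rho(\mu,\nu) < c/2\}$; for any $\nu \in U$, the Lipschitz bound forces
\begin{equation*}
  m_{\theta^*,1}(\nu) \;\geq\; m_{\theta^*,1}(\mu) - \rho(\mu,\nu) \;>\; c/2 \;>\; 0.
\end{equation*}

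It remains to deduce that $\nu$ is nonsticky, not merely that $\nu$ fails to be fully sticky. Since $m_{\theta^*,1}(\nu) > 0$, the set $K_\nu = \{\theta \mid m_{\theta,1}(\nu) \geq 0\}$ is nonempty, so $\nu$ is not fully sticky; and because partial stickiness requires $m_{\theta,1}(\nu) \equiv 0$ on $K_\nu$, the fact that $m_{\theta^*,1}(\nu) > 0$ rules out the partly sticky case as well. Invoking the standing nondegeneracy assumption (\ref{nondegen}) to apply the trichotomy of Proposition~\ref{p:casesSimple} to $\nu$, the only remaining possibility is that $\nu$ is nonsticky. Hence $U$ is an open neighborhood of $\mu$ consisting entirely of nonsticky measures, proving that the nonsticky set is open.

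The main obstacle is conceptual rather than computational: without Proposition~\ref{p:casesSimple}, one would be stuck verifying by hand that $K_\nu$ has nonempty interior and that $m_{\theta,1}(\nu)$ is strictly positive throughout $\mathrm{int}(K_\nu)$, rather than merely at $\theta^*$. Since the function $\theta \mapsto m_{\theta,1}(\nu)$ could in principle develop extra zeros or disconnected pieces of $K_\nu$ under perturbation, the trichotomy (which rules this out via the strict concavity of case (iii)) is doing the real work. Nondegeneracy of $\nu$ is therefore the key assumption ensuring the trichotomy applies; this is implicit in the standing Assumption~\ref{d:nondegen}.
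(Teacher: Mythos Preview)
Your proof is correct and follows essentially the same route as the paper: both reduce the fully sticky case to Lemma~\ref{lem:StickyTopDef} and handle the nonsticky case via the Lipschitz bound~(\ref{rhom1}). The only cosmetic difference is that the paper exhibits positivity of $m_{\theta,1}(\nu)$ on an entire open interval $(A,B)$ rather than at the single maximizer~$\theta^*$; your explicit invocation of the trichotomy in Proposition~\ref{p:casesSimple} makes the concluding step slightly more transparent than the paper's own argument.
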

\begin{proof}
The statement for fully sticky measures is contained in
Lemma~\ref{lem:StickyTopDef}.  On the other hand, by
Definition~\ref{d:stickyclassification} the nonsticky measures are
characterized by $m_{\theta,1}$ being strictly positive for an open
range of $\theta$.  Let $\mu$ be a nonsticky measure with
$m_{\theta,1}(\mu) > 2\epsilon$ for $\theta \in (A,B)$, for some
$\epsilon > 0$.  If $\nu \in B_\epsilon(\mu) \subset
\mathcal{\widetilde M}_1$ then (\ref{rhom1}) implies that for all
$\theta \in (A,B)$,
\begin{align*}
  m_{\theta,1}(\nu) \geq \inf_{\theta \in (A,B)} m_{\theta,1}(\mu) -
  \rho(\mu,\nu) > 2\epsilon - \epsilon.
\end{align*}
Therefore all $\nu \in B_\epsilon(\mu)$ are also nonsticky.
\end{proof}

\begin{defn}\label{d:stickyMeasure}
Fix a measure $\mu \in \mathcal{M}_1$.  A measure $\nu \in
\mathcal{M}_1$, thought of as a direction, is
\begin{enumerate}
\item%
\emph{sticky for $\mu$} if $\mu$ and $\mu + \epsilon \nu$ have the
same mean for all sufficiently small $\epsilon > 0$;
\item%
\emph{fluctuating for~$\mu$} if $\mu$ and $\mu + \epsilon \nu$ have
different means for all sufficiently small $\epsilon > 0$.
\end{enumerate}
\end{defn}

Since normalization does not change whether a measure is sticky,
partly sticky, or nonsticky, one could replace $\mu + \epsilon\nu$ by
$(1-\epsilon)\mu + \epsilon\nu$ in the above definition.  The latter
has the advantage of producing a probability measure if both $\mu$ and
$\nu$ were initially so.

It is convenient to have a specific class of perturbations at our
disposal.  Note that for the unit measure $\delta_p$ supported at the
point $p = (r,\theta')$,
\begin{equation}\label{deltam1theta}
  m_{\theta,1}(\delta_p)
  =
  \begin{cases}
  \\[-4.25ex]
    r\cos(\theta - \theta') & \text{if } |\theta - \theta'| < \pi
  \\        -r              & \text{if } |\theta - \theta'| \geq \pi.
  \\[-.9ex]
  \end{cases}
\end{equation}

\begin{lem}\label{lem:nonStickTopDef}
Any nonsticky or partly sticky $\mu \in \mathcal{M}_1$ has a
fluctuating direction in~$\mathcal{M}_1$.
\end{lem}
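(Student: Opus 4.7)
The plan is to build the required fluctuating direction as a single point mass $\nu = \delta_q$ with a cleverly chosen $q \in \KK$, and to exploit both the additivity \eqref{m1additive} of $\theta \mapsto m_{\theta,1}$ in $\mu$ and the explicit formula \eqref{deltam1theta} for point masses. In each case I will place $q$ on a ray where adding mass directly inflates $m_{\theta,1}$ at the angle that governs the mean of $\mu$, so that the effect on the mean is transparent.

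For the nonsticky case, let $\theta^*$ be the unique maximizer of $m_{\theta,1}(\mu)$ provided by Corollary~\ref{cor:thetastar}, and set $r^* = m_{\theta^*,1}(\mu) > 0$, so that $\bar b = (r^*, \theta^*)$. Choose $q = (r_0, \theta^*)$ for any fixed $r_0 > 0$; then \eqref{deltam1theta} gives $m_{\theta^*,1}(\nu) = r_0$ and hence $m_{\theta^*,1}(\mu + \epsilon \nu) = r^* + \epsilon r_0$ by \eqref{m1additive}. Lemma~\ref{cor:nonStickyOpen} keeps $\mu + \epsilon \nu$ nonsticky for all sufficiently small $\epsilon > 0$, so by Definition~\ref{d:meanK} its mean has radial coordinate $\max_\theta m_{\theta,1}(\mu + \epsilon \nu) \geq r^* + \epsilon r_0 > r^*$. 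Since the radial coordinate of the new mean is strictly larger than that of $\bar b$, the two means differ.

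For the partly sticky case the mean of $\mu$ equals $\0$. I will pick any $\theta_0$ in the interval $[A,B]$ on which $m_{\theta,1}(\mu) = 0$ (the midpoint is convenient) and set $q = (r_0, \theta_0)$ with $r_0 > 0$. Then \eqref{deltam1theta} and \eqref{m1additive} give $m_{\theta_0,1}(\mu + \epsilon \nu) = \epsilon r_0 > 0$ for every $\epsilon > 0$, so the trichotomy of Proposition~\ref{p:casesSimple} forces $\mu + \epsilon \nu$ to be nonsticky: a strictly positive value of $m_{\theta,1}$ simultaneously rules out the fully sticky regime (where $K$ is empty) and the partly sticky regime (where $m_{\theta,1}$ vanishes on $K$). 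By Definition~\ref{d:meanK} its mean therefore has strictly positive radial coordinate and hence is not $\0$.

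The one potentially delicate point is the partly sticky case, where I must be confident that an arbitrarily small perturbation really ejects $\mu$ from the sticky regime rather than leaving it in some borderline configuration; Proposition~\ref{p:casesSimple} makes this clean because the three regimes are distinguished purely by the sign behavior of $m_{\theta,1}$, which is controlled directly by \eqref{m1additive}. Integrability of $\nu \in \mathcal{M}_1$ is automatic, since $\nu$ is a point mass at a point with finite distance to $\0$.
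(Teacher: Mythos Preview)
Your approach is the same as the paper's: perturb by a point mass $\delta_{(r_0,\theta^*)}$ on the ray through the maximizer of $m_{\theta,1}(\mu)$. The partly sticky case is handled correctly.

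In the nonsticky case there is a genuine (though easily repaired) gap, coming from normalization. The mean in Definition~\ref{d:meanK} is defined for probability measures, so the mean of $\mu+\epsilon\nu$ must be read as the mean of its normalization $(\mu+\epsilon\nu)/(1+\epsilon)$; its radial coordinate is therefore $\max_\theta m_{\theta,1}(\mu+\epsilon\nu)/(1+\epsilon)$, not $\max_\theta m_{\theta,1}(\mu+\epsilon\nu)$. Your lower bound $r^*+\epsilon r_0$ then yields a radial coordinate $\geq (r^*+\epsilon r_0)/(1+\epsilon)$, and this exceeds $r^*$ only when $r_0>r^*$. In fact $r_0=r^*$ gives a \emph{sticky} direction: both $m_{\theta,1}(\mu)$ and $m_{\theta,1}(\delta_{(r^*,\theta^*)})$ are uniquely maximized at $\theta^*$, so the normalized perturbed measure has mean exactly $(r^*,\theta^*)=\bar b$. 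Thus ``any fixed $r_0>0$'' is not enough.

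The fix is immediate. Either restrict to $r_0>r^*$ so your inequality goes through, or do what the paper does: observe that both summands in $m_{\theta,1}(\mu)+\epsilon\, m_{\theta,1}(\delta_{(r_0,\theta^*)})$ are uniquely maximized at $\theta=\theta^*$, so the new mean is exactly $\bigl((r^*+\epsilon r_0)/(1+\epsilon),\,\theta^*\bigr)$, which differs from $(r^*,\theta^*)$ whenever $r_0\neq r^*$. (The paper states this using the convex combination $(1-\epsilon)\mu+\epsilon\delta_{(r,\theta^*)}$ with $r\neq r^*$, which avoids the normalization bookkeeping.) The appeal to Lemma~\ref{cor:nonStickyOpen} is harmless but unnecessary: $m_{\theta^*,1}>0$ already forces nonstickiness via Proposition~\ref{p:casesSimple}, exactly as you argue in the partly sticky case.
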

\begin{proof}
Let $(r^*,\theta^*)$ be the mean of~$\mu$.  When $\mu$ is partly
sticky, $r^* = 0$ and $\theta^*$ is any value; when $\mu$ is
nonsticky, $r^* = m_{\theta^*,1} > 0$ and $\theta \mapsto
m_{\theta,1}(\mu)$ attains its maximum at the unique point~$\theta^*$.
Fix any radius $r > 0$ with $r \neq r^*$, and set $\mu_\epsilon =
(1-\epsilon)\mu + \epsilon \delta_{(r,\theta^*)}$.  By
(\ref{m1additive}) and~(\ref{deltam1theta}), $\theta \mapsto
m_{\theta,1}(\mu_\epsilon)$ now has its unique maximum at~$\theta^*$, but
$m_{\theta^*,1}(\mu_\epsilon) \neq m_{\theta^*,1}(\mu)$ because $r
\neq r^*$.  Hence $\mu$ and $\mu_\epsilon$ have different means, so
the direction $\delta_{(r,\theta^*)}$ is fluctuating for~$\mu$.
\end{proof}

\begin{lem}\label{lem:partStickyNearSticky}
If $\mu\in \mathcal{M}_1$ is partly sticky then $\mu$ has a sticky
direction (other than $\nu = \mu$).
\end{lem}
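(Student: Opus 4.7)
The goal is to exhibit a sticky direction $\nu\in\mathcal{M}_1$ distinct from $\mu$, i.e.\ a measure $\nu\neq\mu$ for which $\mu$ and $\mu+\epsilon\nu$ share the same mean whenever $\epsilon>0$ is small. Since $\mu$ is partly sticky, its mean is~$\0$ (Definition~\ref{d:meanK}), and by Proposition~\ref{p:casesSimple} we have $m_{\theta,1}(\mu)\leq 0$ for every $\theta\in\RR/\alpha\ZZ$, with equality precisely on the closed interval $K=[A,B]$. My plan is to reduce the problem to showing that $\mu+\epsilon\nu$ is itself sticky (hence has mean~$\0$), and to achieve this by choosing $\nu$ to be \emph{fully sticky} with a strictly negative first folded coordinate uniformly in~$\theta$.

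The key computation. Combining Definition~\ref{d:stickyclassification} with the linearity identity~(\ref{m1additive}) gives
$$
  m_{\theta,1}(\mu+\epsilon\nu)=m_{\theta,1}(\mu)+\epsilon\,m_{\theta,1}(\nu).
$$
If $\nu\in\mathcal{M}_1$ satisfies $m_{\theta,1}(\nu)\leq -c<0$ for all $\theta$ and some constant $c>0$, then since $m_{\theta,1}(\mu)\leq 0$ everywhere, we obtain $m_{\theta,1}(\mu+\epsilon\nu)\leq -\epsilon c<0$ for every $\epsilon>0$. Thus $\mu+\epsilon\nu$ is fully sticky, so by Definition~\ref{d:meanK} its mean is~$\0$, which coincides with the mean of~$\mu$. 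This verifies that $\nu$ is a sticky direction for~$\mu$ in the sense of Definition~\ref{d:stickyMeasure}(i) for \emph{all} sufficiently small~$\epsilon>0$.

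Constructing such a~$\nu$. Take $\nu$ to be the uniform probability measure on the unit ``circle'' $S=\{(1,\theta)\mid \theta\in\RR/\alpha\ZZ\}\subset\KK$; this lies in $\mathcal{M}_1$ since $S$ is bounded. By rotational invariance of~$\nu$, the quantity $m_{\theta,1}(\nu)$ is independent of~$\theta$, and a direct evaluation at $\theta=0$, using Definition~\ref{d:folding} and the formula $e_1\cdot F_0(1,\theta)=\cos\bigl(\min\{|\theta|,\pi\}\bigr)$, yields
$$
  m_{\theta,1}(\nu)=\frac1\alpha\!\left[\int_{-\pi}^{\pi}\!\cos\theta\,d\theta
                     -\!\int_{|\theta|\in[\pi,\alpha/2]}\!\!\!d\theta\right]
                  =\frac{2\pi-\alpha}{\alpha}<0,
$$
since $\alpha>2\pi$. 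Finally, $\nu\neq\mu$: indeed, $m_{\theta,1}(\mu)=0$ on the non-empty set $K$, whereas $m_{\theta,1}(\nu)$ is a strictly negative constant, so the two measures are not equal even after rescaling.

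I do not anticipate any serious obstacle here: the argument is essentially a pointwise inequality that uses only the elementary linearity~(\ref{m1additive}), the partly sticky characterization of~$\mu$, and the explicit hyperbolic feature $\alpha>2\pi$ which produces the negative constant $(2\pi-\alpha)/\alpha$. The only minor point to verify is integrability of $\nu$ (immediate, since $\nu$ is compactly supported) and the rotational invariance used to reduce the moment computation to a single integral, both of which are routine.
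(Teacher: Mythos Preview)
Your proof is correct and follows essentially the same route as the paper: pick a fully sticky $\nu$, use linearity~(\ref{m1additive}) together with $m_{\theta,1}(\mu)\leq 0$ to conclude $m_{\theta,1}(\mu+\epsilon\nu)<0$ everywhere, so $\mu+\epsilon\nu$ is fully sticky with mean~$\0$. The paper simply says ``let $\nu$ be any fully sticky measure,'' whereas you go further and explicitly construct one (the uniform measure on the unit circle, with $m_{\theta,1}(\nu)=(2\pi-\alpha)/\alpha$), which makes your argument slightly more self-contained.
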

\begin{proof}
Since $\mu$ is partly sticky, $m_{\theta,1}(\mu) \leq 0$ for
all~$\theta$.  Let $\nu$ be any fully sticky measure and define
$\mu_\epsilon = (1-\epsilon)\mu + \epsilon\nu$.  Since $\nu$ is fully
sticky, $m_{\theta,1}(\nu) < 0$ for all~$\theta$, and hence
$m_{\theta,1}(\mu_\epsilon) < 0$ for all~$\theta$ as long as $\epsilon
> 0$.  Therefore $\mu_\epsilon$ is fully sticky, and the means
of~$\mu_\epsilon$ and~$\mu$ coincide at~$\0$ for all $\epsilon \in
[0,1]$.  Thus $\nu \neq \mu$ is a sticky direction for~$\mu$.
\end{proof}

The above lemmas combine with the fact that all measures in
$\mathcal{\widetilde M}_1$ are either fully sticky, partly sticky, or
nonsticky (Proposition~\ref{p:cases} and
Definition~\ref{d:stickyclassification}) to prove the following
theorem, which could be seen as an alternative definition of the terms
``fully sticky'', ``partly sticky'', and ``nonsticky'' for finite
measures on~$\KK$.

\begin{thm}\label{t:TopSticky}
Let $\Scal \subset \mathcal{\widetilde M}_1$ be the open subset of
fully sticky measures.  A measure $\mu \in \mathcal{\widetilde
M}_1$~is
\begin{enumerate}
\item%
fully sticky (i.e.\ $\mu \in \Scal$) if and only if there is an open
neighborhood of $\mu$ so that all measures in that neighborhood have
the same mean as $\mu$.  Equivalently, a measure $\mu$ is fully sticky
if and only if all directions $\nu \in \mathcal{\widetilde M}_1$ are
sticky for $\mu$.
\item%
partly sticky if and only if $\mu \in \partial \Scal$, the topological
boundary of $\Scal$.  Equivalently, a measure $\mu$ is partly sticky
if and only if every open neighborhood of $\mu$ contains open sets $U$
and $V$ such that $\nu \in V \implies \nu$ has the same mean as~$\mu$
and $\nu \in U \implies \mu$ and~$\nu$ have different means.
\item%
nonsticky if and only if $\mu \in \mathcal{\widetilde M}_1 \setminus
\ol{\Scal}$, the compliment of the closure of~$\Scal$.  Equivalently,
a measure $\mu$ is nonsticky if and only if no open neighborhood
of~$\mu$ contains an open set $U$ consisting of measures with the same
mean as~$\mu$.
\end{enumerate}
\end{thm}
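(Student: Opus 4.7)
The plan is to assemble Theorem~\ref{t:TopSticky} from Lemmas~\ref{lem:StickyTopDef}, \ref{cor:nonStickyOpen}, \ref{lem:nonStickTopDef}, and~\ref{lem:partStickyNearSticky}, together with the trichotomy (Proposition~\ref{p:cases}): every $\mu \in \mathcal{\widetilde M}_1$ is exactly one of fully sticky, partly sticky, or nonsticky. The overall structure is first to establish the topological identifications $\{\text{fully sticky}\} = \Scal$, $\{\text{partly sticky}\} = \partial \Scal$, $\{\text{nonsticky}\} = \mathcal{\widetilde M}_1 \setminus \ol{\Scal}$, and then to verify the equivalent mean-based characterizations.

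For part~(1), since $\Scal$ is \emph{defined} as the set of fully sticky measures, the set-theoretic equivalence is tautological. Lemma~\ref{lem:StickyTopDef} directly supplies the neighborhood of measures sharing the mean of $\mu$. Both converses---that existence of such a neighborhood, and that stickiness of all directions, each force $\mu$ to be fully sticky---are contrapositives of Lemma~\ref{lem:nonStickTopDef}: if $\mu$ were partly sticky or nonsticky, it would admit a fluctuating direction $\delta_{(r,\theta^*)}$, and the perturbations $(1-\epsilon)\mu + \epsilon\delta_{(r,\theta^*)}$ would provide nearby measures with different mean, violating either formulation.

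For part~(2), if $\mu$ is partly sticky, Lemma~\ref{lem:partStickyNearSticky} produces a fully sticky $\nu$ with $(1-\epsilon)\mu + \epsilon\nu \in \Scal$ for all small $\epsilon > 0$, so $\mu \in \ol{\Scal}$; combined with $\mu \notin \Scal$ this gives $\mu \in \partial \Scal$. Conversely, $\mu \in \partial \Scal$ cannot be nonsticky, since by Lemma~\ref{cor:nonStickyOpen} the nonsticky set is open and disjoint from $\Scal$, so any nonsticky measure has a whole neighborhood disjoint from $\Scal$, contradicting $\mu \in \ol{\Scal}$; by trichotomy $\mu$ is then partly sticky. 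The equivalent formulation follows by reading off both witnesses: an open $V \subset \Scal$ near $\mu$ (whose measures all have mean $\0 = $ mean of $\mu$) comes from $\mu \in \ol{\Scal}$, while an open $U$ of measures with different mean comes from perturbing by the fluctuating direction from Lemma~\ref{lem:nonStickTopDef} and then invoking openness of the nonsticky set.

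Part~(3) is immediate set-theoretically once (1) and (2) are settled, and for the equivalent mean characterization the key observation is this: if $\mu$ is nonsticky with mean $\bar b \neq \0$, and $U$ is any hypothetical open set of measures all with mean $\bar b$, then every $\nu \in U$ must itself be nonsticky (since fully and partly sticky measures have mean $\0$); but then Lemma~\ref{lem:nonStickTopDef} provides a fluctuating direction at $\nu$, so $\nu + \delta\xi \in U$ for small $\delta > 0$ has a different mean, a contradiction. The converse, that sticky $\mu$ \emph{does} admit such a $U$, is covered by parts~(1) and~(2). The main obstacle is not any individual technical difficulty but careful bookkeeping of which prior lemma delivers which piece (openness of $\Scal$, openness of the nonsticky set, density of $\Scal$ at partly-sticky points, fluctuating directions outside $\Scal$), and correctly using that the mean of a sticky measure is always $\0$ to partition open sets by their common-mean value.
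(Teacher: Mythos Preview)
Your proposal is correct and follows exactly the route the paper indicates: the paper's proof is the single sentence ``The above lemmas combine with the fact that all measures in $\mathcal{\widetilde M}_1$ are either fully sticky, partly sticky, or nonsticky (Proposition~\ref{p:cases} and Definition~\ref{d:stickyclassification}) to prove the following theorem,'' and you have spelled out precisely how those pieces (Lemmas~\ref{lem:StickyTopDef}, \ref{cor:nonStickyOpen}, \ref{lem:nonStickTopDef}, \ref{lem:partStickyNearSticky} plus the trichotomy) fit together. Your bookkeeping is sound, including the observation that the perturbation from Lemma~\ref{lem:nonStickTopDef} lands in the nonsticky set so that openness there furnishes the required $U$, and that the three mean-based characterizations are mutually exclusive so that establishing each forward implication yields the converses via trichotomy.
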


\begin{rmk} As $N$ gets large, the empirical measure
$$%
  \mu_N = \frac{1}{N} \sum_{n=1}^N \delta_{p_n}
$$
converges to $\mu$ in the topology generated by $\rho$ if the $p_n$
are chosen independently and according to $\mu$.  (For instance
combine \cite[Theorem~6.9]{villani09} and the standard weak
convergence of empirical measures.)  If $\mu$ is sticky then
eventually $\mu_N$ lies in a neighborhood of~$\mu$ in which all
measures have the same mean.  On the other hand, if $\mu$ is
nonsticky then nearby measures have different means than $\mu$ and
hence the mean of $\mu_N$ fluctuates.  When $\mu$ is partly sticky,
sometimes $\mu_N$ lies in a set of measures sharing their mean
with~$\mu$, and sometimes it lies in a set of measures having
different means than~$\mu$.
\end{rmk}

\begin{rmk}\label{r:topology}
Endowing $\mathcal{M}_1$ instead with the topology generated by the
open neighborhoods
$$%
  U_{\mu,\epsilon}
  =
  \{\nu\in\mathcal{M}_1 \,\big| \;\;\max_\theta |m_{\theta,1}(\mu) -
  m_{\theta,1}(\nu)| < \epsilon\}
$$
maintains the truth of the above results.  However, using the standard
weak topology on measures, which is finer, would cause the topological
characterization of stickiness to fail.
\end{rmk}

\subsection{Topological definition for arbitrary metric spaces}\label{sub:arbitrary}

Suppose $\KK$ is a metric space, and let $\Mcal$ be a set of
probability measures on $\KK$.

\begin{ex}\label{e:topology}
When $\Mcal = \widetilde\Mcal_1$ is the set of Borel probability
measures on~$\KK$ satisfying the integrability
condition~(\ref{integ}), different topologies on~$\Mcal$ are induced
by the Wasserstein metric and by the sets $U_{\mu,\epsilon}$ in
Remark~\ref{r:topology}.  The standard weak topology is yet another
possibility.
\end{ex}

\begin{defn}\label{d:sticky}
Let $\Mcal$ be a set of measures on a metric space~$\KK$ with the metric topology.  Assume
$\Mcal$ has a given topology.  A \emph{mean} is a continuous
assignment $\Mcal \to \{$closed subsets of~$\KK\}$.  A measure~$\mu$
\emph{sticks} to a closed subset $C \subseteq \KK$ if every
neighborhood of~$\mu$ in~$\Mcal$ contains a nonempty open subset consisting of
measures whose mean sets are contained in~$C$.
\end{defn}

\begin{rmk}
Regarding the topology on the set of closed subsets of $\KK$, implicit in Definition \ref{d:sticky}, we have in mind the topology induced by the \emph{Hausdorff distance}:
\[
d(A,B) = \max \left\{  \sup_{a \in A} d(a,B)\;,\; \sup_{b \in B} d(b,A)  \right\}.
\] 
That is, $d(A,B)$ is the farthest a point of $A$ is from $B$ or the farthest a point of $B$ is from $A$, whichever is greater. Other topologies on the set of closed subsets of $\KK$ are possible, such as the ``pointed Hausdorff topology", which is compact and locally compact.
\end{rmk}

Continuity implies that the mean of $\mu$ is contained in~$C$ if~$\mu$
sticks to~$C$.

\begin{ex}\label{e:kale}
This paper has investigated measures on the kale~$\KK$, which can
stick to the subset $C = \{\0\}$ consisting of the origin.  The notion
of ``mean'' here is Definition~\ref{d:meanK}, which assigns to each
measure a single point; this assignment is continuous by
Lemma~\ref{lem:Dmtheta}.
\end{ex}

In spaces of interest, integrability conditions, such as those in
Section~\ref{s:results} here, would imply existence of means.
However, means in general metric spaces---even nice ones such as
compact Riemannian manifolds---need not be single points.  In other
words, the general analogue of the minimization problem in
Section~\ref{sub:barycenters} could have multiple solutions.  For
instance the mean set of the uniform measure on a sphere is equal to
that entire sphere, whereas each sample mean is unique almost surely
(cf.\ Remark~2.6 in \cite{BP03}).  In Section~5 of [\cite{HH14}] there
is an example of a measure on the circle where the mean set is a
proper circular arc.  In fact, this can be viewed as the limiting case
of measures with unique means, the central limit theorems for which
feature arbitrarily slow convergence rates.  Uniqueness of means for
the kale stem from its negative curvature; see
\cite[Proposition~4.3]{Sturm2003}, for example.

\begin{rmk}\label{r:sticky}
In the language of earlier sections, Definition~\ref{d:sticky} only
sets forth the notion of ``sticky'', which includes both the sticky
and partly sticky cases.  In the generality of
Definition~\ref{d:sticky}, it would be said that a measure~$\mu$
\emph{fully sticks to~$C$} if some open neighborhood of~$\mu$ consists
entirely of measures whose means are contained within~$C$.  It would
not be required that the means (closed subsets of $C$) of the measures in such a neighborhood
should equal the mean of $\mu$ or even intersect it at all.  In the case
where~$\KK$ is an open book [\cite{HHMMN13}], for example, means are
unique and measures can stick to the spine, but nothing prevents the
mean of a sticky measure from moving along the spine.

The set of partly sticky measures would be defined as those that are
sticky but not fully sticky.  Definition~\ref{d:sticky} implies that
the set of partly sticky measures is the topological boundary of the
set of sticky measures.
\end{rmk}

It remains open to characterize which metric spaces---among, say, the
topologically stratified spaces (see [\cite{GM1988}] or
[\cite{Pflaum2001}]), to be concrete---admit measures that stick to
subsets of measure~$0$.  Given such a sticky situation, first goals
would be to prove laws of large numbers and central limit theorems,
contrasting the fully, partly, and nonsticky cases.  The limiting
measures in such results would be singular analogues of Gaussian
distributions; it is not clear what properties of Gaussian
distributions are the right ones to lift so as to characterize the
building blocks of limiting measures in general.

\newpage 
\section{List of Notation} \label{sec:notation}

\begin{tabular}{cl}
$d(p,q)$ & Metric on $\KK$. See Section \ref{sub:isolated}.\\
$F_\theta$ & The folding map, from $\KK$ to $\Rm^d$, at angle $\theta$. Definition \ref{d:folding}. \\
$\mathcal{I}_\theta $ & The shadow of angle $\theta$; an open subset of $\KK$. Definition \ref{d:IcalA}. \\
$\mu$ & A probability measure on $\KK$.\\
$\mu_N$ & The empirical measure for points  $p_1,\dots,p_N \in \KK$. See Section \ref{sub:empirical}. \\
$b_N$ & The barycenter of a (random) set of points $p_1,\dots,p_N \in \KK$. See (\ref{d:sample-barycenter-kale}).\\
$\bar b$ & Population barycenter. See Definition \ref{d:populationbarycenter}.\\
$\tilde \mu_\theta$ & The pushforward $\mu \circ F_\theta^{-1}$ of $\mu$ under $F_\theta$; a measure on $\Rm^2$. \\
$m_\theta $ & First moment of measure $\mu$ folded about angle $\theta$. Definition \ref{d:moments}.\\
$m_\theta^N$ & First moment of the empirical measure $\mu_N$ folded about angle $\theta$. Definition \ref{d:moments}.\\
$m_{\theta,1}$, $m_{\theta,2}$ & Components of $m_\theta = (m_{\theta,1}, m_{\theta,2}) \in \Rm^2$.\\
$\eta_{\theta,N}$ & Folded average, equivalent to $m_{\theta}^N$. See (\ref{etaNMequiv}).\\
$\nu_N$ & Distribution of rescaled empirical means, a probability measure on $\KK$. See (\ref{eq:rescaled-emp-means}).\\
$\kappa(\omega)$ & A random variable related to the CLT in the non-sticky case. See (\ref{eq:kappa}). \\
$w^\pm(\theta)$ & See (\ref{eq:kappa}). \\
$\mathcal{I}^\pm_\theta$ & Shadow at angle $\theta$ including part of the boundary. See (\ref{wpmdef}).\\
$\Phi_\sigma$ & Rotation in $\Rm^2$ by angle $\sigma$. See Lemma \ref{lem:mthetamodulus}.  \\
$\bar r$ & Constant bounding first moments of the measure $\mu$. See (\ref{integ}). \\
$\hat P_\rho$ & Convex projection onto a sector in $\Rm^2$. See (\ref{eq:convex projection}).\\
$g$ & Gaussian measure on $\Rm^2$ with mean zero, covariance $\Sigma$. See Sections \ref{sub:partly} and \ref{sub:non-sticky}.
\end{tabular}



\end{document}